\begin{document}

\title{On the consistency of Fr\'echet means in deformable models for curve and image analysis}

\author{J\'er\'emie Bigot and Benjamin Charlier \vspace{0.2cm} \\
Institut de Math\'ematiques de Toulouse\\
Universit\'e de Toulouse et CNRS (UMR 5219)  \vspace{0.2cm} \\
{\small {\tt Jeremie.Bigot@math.univ-toulouse.fr, Benjamin.Charlier@math.univ-toulouse.fr} }}
\maketitle

\thispagestyle{empty}

\begin{abstract}
A new class of statistical deformable models is introduced to study high-dimensional curves or images.  In addition to the standard measurement error term, these deformable models include an extra error term modeling  the individual variations in intensity around a  mean pattern. It is shown that an appropriate tool for statistical inference in such models is the notion of sample Fr\'echet means, which leads to estimators of the deformation parameters and the mean pattern. The main contribution of this paper is to study how the behavior of these estimators depends on the number $n$ of design points and the number $J$ of observed curves (or images). Numerical experiments are given to illustrate the finite sample performances of the procedure.
\end{abstract}

\noindent \emph{Keywords:} Mean pattern estimation, Fr\'echet mean, Shape analysis, Deformable models, Curve registration, Image warping, Geometric variability, High-dimensional data.

\noindent\emph{AMS classifications:} Primary 62G08; secondary 42C40

\bigskip

\noindent{\bf Acknowledgements -}   We would like to thank Dominique Bakry for fruitful discussions. Both authors would like to thank the Center for Mathematical Modeling and the CNRS for financial support and excellent hospitality while visiting Santiago where part of this work was carried out. We are very much indebted to the referees and the Associate Editor for their constructive comments and remarks that resulted in a major revision of the original manuscript.

\section{Introduction}\label{part:intro}

\subsection{A statistical deformable model for curve and image analysis}

In many applications, one observes a set of curves or grayscale images which are high-dimensional data. In such settings, it is reasonable to assume that the data at hand $Y_{j}^{\ell}$, denoting the $\ell$-th observation for the $j$-th curve (or image), satisfy the following regression model:
\begin{equation} \label{eq:model}
Y_{j}^{\ell} = f_{j}(t_{\ell}) +\sigma \varepsilon_{j}^{\ell}, \quad j=1,\ldots,J, \; \text{ and }\; \ell = 1,\ldots,n,
\end{equation}
where $f_{j} : \Omega \longrightarrow \R$ are unknown regression functions (possibly random) with $\Omega$ a convex subset of $\R^{d}$, the $t_{\ell}$'s are non-random points in $\Omega$ (deterministic design), the error terms $\varepsilon_{j}^{\ell}$ are i.i.d.\ normal variables with zero mean and variance $1$, and $\sigma > 0$. In this paper, we will suppose that the $f_{j}$'s are random elements which vary around the same mean pattern. Our goal is to estimate such a mean pattern and to study the consistency of the proposed estimators in various asymptotic settings:  either when both the number $n$ of design points  and the number $J$ of curves (or images)  tend to infinity, or when $n$ (resp.\ $J$) remains fixed while $J$ (resp.\ $n$)  tends to infinity. 

In many situations, data sets of curves or images exhibit a source of geometric variations in time or shape. In such settings, the usual Euclidean mean $\bar{Y}^{\ell} = \frac{1}{J} \sum_{j=1}^{J} Y_{j}^{\ell} $  in model \eqref{eq:model} cannot be used to recover a meaningful mean pattern. Indeed, consider the following simple model of randomly shifted curves (with $d=1$)  which is commonly used in many applied areas such as neuroscience  \cite{TIR} or biology  \cite{MR1841413},
\begin{equation} \label{eq:modelshift}
f_{j}(t_{\ell}) = f(t_{\ell} - \btheta_{j}^{\ast}),  \quad j=1,\ldots,J, \; \text{ and }\;\ell = 1,\ldots,n,
\end{equation}
where $f : \Omega \longrightarrow \R$ is the mean pattern of the observed curves, and the $\btheta_{j}^{\ast}$'s are i.i.d.\ random variables in $\RR$ with density $g$ and independent of the $ \varepsilon_{j}^{\ell}$'s. In model (\ref{eq:modelshift}), the shifts $\btheta_{j}^{\ast}$  represent a source of variability in time. However, in (\ref{eq:modelshift}) the  Euclidean mean is not a consistent estimator of the mean pattern $f$ since by the law of large numbers
$$
\lim_{J\to \infty} \bar{Y}^{\ell}  =  \lim_{J\to \infty} \frac{1}{J} \sum_{j=1}^J f(t_\ell - \btheta_j^{\ast}) = \int f(t_{\ell} - \btheta)g(\btheta)d\btheta \quad a.s.
$$

The randomly shifted curves model \eqref{eq:modelshift} is close to the perturbation model introduced by \cite{MR1108330} in shape analysis for the study of consistent estimation of a mean pattern from a set of random planar shapes. The mean pattern  to estimate in  \cite{MR1108330} is called a population mean, but to stress the fact that it comes from a perturbation model \cite{HuckemannSJS} uses the term perturbation mean. To achieve consistency in such models, a Procrustean procedure is used in \cite{MR1108330}, which  leads to the statistical analysis of sample Fr\'echet means  \cite{fre} which are extensions of the usual Euclidean mean to non-linear spaces using non-Euclidean metrics. For random variables belonging to a nonlinear manifold, a well-known example is the computation of  the mean of a set of planar shapes in the Kendall's shape space \cite{kendall} which leads to the  Procrustean means studied in  \cite{MR1108330}.   Consistent estimation of a mean planar shape has been studied by various authors, see e.g.\ \cite{MR1108330,MR1436569,MR1891212,huilling98,lekum}. A detailed study of some properties of the Fr\'echet mean in finite dimensional Riemannian manifolds (such as consistency and uniqueness) has been performed in \cite{MR0501230,MR1370296,batach1,batach2,HuckemannSJS,HuckemannAOS,Afsari} .

The main goal of this paper is to introduce statistical deformable models for curve and image analysis that are analogue to Goodall's perturbation models  \cite{MR1108330}, and to build consistent estimators of a mean pattern in such models. Our approach is inspired by Grenander's pattern theory which considers that the curves or images $f_{j}$ in model \eqref{eq:model} are obtained through the deformation of a  mean pattern  by a Lie group action  \cite{Gre,gremil}. In the last decade, there has been a growing interest in transformation Lie groups  to model the geometric variability of  images, and the study of the properties of such deformation groups  is now an active field of research (see e.g.\   \cite{miltrou,trouyou} and references therein). 
There is also currently a growing interest in statistics on the use of Lie group actions to analyze geometric modes of variability of a data set \cite{Munk1,Munk2}.

To describe more formally geometric variability, denote by $L^{2}(\Omega)$ the set of square integrable real-valued functions on $\Omega$, and by $\PP$ an open subset of $\RR^{p}$. To the set $\PP$, we associate a parametric family of operators $(T_{\btheta})_{\btheta \in \PP}$  such that for each $\btheta \in \PP$ the operator $T_{\btheta}: L^{2}(\Omega) \longrightarrow L^{2}(\Omega)$ represents a geometric deformation (parametrized by $\btheta$) of a curve or an image. Examples of such deformation operators include the cases of:
\begin{description}
\item[-]  {\em Shifted curves:} $T_{\btheta} f(t) := f(t-\btheta),$ with $\Omega = [0,1]$, $f \in L_{per}^{2}([0,1])$ (the space of periodic functions in $L^{2}([0,1])$ with period 1) and $\PP$ an open set of $\RR$.

\item[-] {\em Rigid deformation of two-dimensional images:} 
$$T_{\btheta} f(t) :=  f \left(e^a  R_{\alpha} t - b \right), \quad   \mbox{ for } \btheta = (a,\alpha,b) \in \PP,$$
with  $\Omega = \R^{2}$, $\PP \subset \R \times \R \times \R^2$ 
where  $R_{\alpha} = \left(\begin{array}{cc} \cos(\alpha) &  -\sin(\alpha) \\   \sin(\alpha) & \cos(\alpha) \end{array}\right)$ is a rotation matrix in $\R^{2}$, $e^a$ is an isotropic scaling and $b$ a translation in $\RR^2$.

\item[-] {\em Deformation by a Lie group action:} the two above cases are examples of a Lie group action on the space $L^{2}(\Omega)$ (see \cite{MR1834454} for an introduction to Lie groups). More generally, assume that $G$ is a connected Lie group of dimension $p$ acting on $\Omega$, meaning that for any $(g,t) \in G \times \Omega$ the  action $\cdot$ of $G$ onto $\Omega$ is such that $g \cdot t \in \Omega$. In general, $G$ is not a linear space but can be locally parametrized by a its Lie algebra $\mathcal{G} \simeq \R^p$ using the exponential map  $\exp : \mathcal{G} \to G$.  If $\PP \subset\R^p$. This leads  
for $(\btheta,f) \in \PP \times L^{2}(\Omega)$ to define the deformation  operators
$$
T_{\btheta} f(t) := f \left( \exp(\btheta) \cdot t \right).
$$

\item[-] {\em Non-rigid deformation of curves or images:} assume that one can construct a family  $(\psi_{\btheta})_{\btheta \in \PP}$ of parametric diffeomorphisms of $\Omega$ (see e.g.\ \cite{BGL09}). Then, for $(\btheta,f) \in \PP \times L^{2}(\Omega)$, define the deformation operators
$$
T_{\btheta} f(t) := f \left( \psi_{\btheta}(t) \right).
$$
\end{description}
Then, in model \eqref{eq:model}, we assume that the $f_{j}$'s have a certain homogeneity in structure in the sense that there exists some $f \in L^2(\Omega)$ such that 
\begin{equation}\label{eq:modT}
f_{j}(t) = T_{\btheta_{j}^{\ast}}\big[f + Z_{j}\big](t), \quad \mbox{ for all } t \in \Omega,  \; \text{ and }\; j=1,\ldots,J,
\end{equation}
where $\btheta^{\ast}_{j} \in \PP, \ j=1,\ldots,J$ are i.i.d.\ random variables (independent of the $\varepsilon_{j}^{\ell}$'s) with an unknown density $g$ with compact support $\Theta$ included in $\PP$ satisfying:
\begin{hyp}\label{hyp:Theta}
The density $g$ of  the $\btheta^{\ast}_{j}$'s is continuously differentiable on $\PP$ and has a compact support $\Theta$ included in $\PP\subset\R^{p}$. We assume that $\Theta$  can be written
\begin{equation}
\Theta = \left\{ \btheta = (\theta^{1},\ldots,\theta^{p})\in\R^p,\ |\theta^{p_1}| \leq \rho,\  1 \leq p_1 \leq p \right\}
\end{equation}
where $\rho > 0$.
\end{hyp}

 The  function  $f$ in model \eqref{eq:modT} represents the unknown mean pattern of the $f_{j}$'s. The $Z_j$'s are supposed to be independent of the $\varepsilon_{j}^{\ell}$'s and are i.i.d.\ realizations of a second order centered Gaussian process $Z$ taking its values in $ L^2(\Omega)$.  The $Z_{j}$'s represent the individual variations in intensity around $f$, while the random operators  $T_{\btheta_{j}}$ model geometric deformations in time or space.  Then, if we assume that the $T_{\btheta}$'s are linear operators, equation (\ref{eq:modT}) leads to the following {\em statistical deformable model} for curve or image analysis
\begin{equation} \label{eq:modeldeform}
Y_{j}^{\ell} = T_{\btheta_{j}^{\ast}} f(t_{\ell}) + T_{\btheta_{j}^{\ast}} Z_{j}(t_{\ell}) +\sigma \varepsilon_{j}^{\ell}, \quad j=1,\ldots,J,  \; \text{ and }\; \ell = 1,\ldots,n,
\end{equation}
where $\varepsilon_{j}^{\ell}$ are i.i.d.\ normal variables with zero mean and variance $1$.

Model \eqref{eq:modeldeform} could be also called a {\it perturbation model} using the terminology in \cite{MR1108330,HuckemannSJS} for shape analysis. To be more precise, let $\bY \in \R^{n \times 2}$ be a set of $n$ points in $\RR^2$ representing a planar shape. Define a deformation operator $T_{\btheta}$ for  $\btheta = ( a, \alpha ,b ) \in \Theta =\R\times [0,2\pi] \times  \R^2$ acting on $\R^{n\times 2}$ in the following way
$$
T_{\btheta} \bY = e^{a} \bY R_{\alpha} + \1_n b' , \mbox{ where } R_{\alpha} = \left(\begin{array}{cc}\cos(\alpha) & -\sin(\alpha)\\ \sin(\alpha) & \cos(\alpha)\end{array} \right),
$$
and $\1_n = (1,\ldots,1)' \in \RR^{n}$. Consistent estimation of a mean shape has been first studied in \cite{MR1108330} when a set of random shapes $\bY_{1},\ldots,\bY_{J}$ is drawn from the following perturbation model 
\begin{equation}\label{eq:modelKendall}
\bY_j = T_{\btheta_{j}^*} (\mu + \bzeta_j), \; j=1,\ldots,J.
\end{equation}

Model \eqref{eq:modelKendall} is similar to the statistical deformable model (\ref{eq:modeldeform}), where $\mu \in \R^{n \times 2}$ is the unknown perturbation mean to estimate, and $\bzeta_j$ are i.i.d.\ random vectors in $\R^{n \times 2}$ with zero mean. Nevertheless, there exists major differences between our approach and the one in  \cite{MR1108330}. First, in model (\ref{eq:modeldeform}), the deformations parameters $\btheta_{j}^{\ast}$  are assumed to be random variables following an unknown distribution, whereas they are just nuisance parameters in model \eqref{eq:modelKendall} for shape analysis, see \cite{MR1108330,MR1436569}. In some applications (e.g.\ in biomedical imaging  \cite{Joshi04unbiaseddiffeomorphic}), it is of interest to reconstruct the unobserved parameters $\btheta_{j}^{\ast}$ and to estimate their distribution. One of the main contribution of this paper is  then to  construct upper and lower bounds for the  estimation of such deformation parameters. Moreover, in model (\ref{eq:modeldeform}), they are too additive error terms, whereas the model \eqref{eq:modelKendall} only include the error term $\bzeta_j$. In model (\ref{eq:modeldeform}), the $\varepsilon_{j}^{\ell}$ is an additive noise modeling the errors in the measurements, while the $Z_j$'s model (possibly smooth) variations in intensity of the individuals around the mean pattern $f$. 

In \cite{
MR1436569}, the authors studied the relationship between isotropicity of  the additive noise $\bzeta_j$ and the convergence of Procrustean procedures to the perturbation mean $\mu$ as $J \to + \infty$. It is shown in \cite{
MR1436569} that, for isotropic errors,  Procrustean means are consistent, but that, for non-isotropic errors, they may not converge to $\mu$. For a recent discussion on the issues of consistency of sample Procrustes means in perturbation models and extension to non-metrical Fr\'echet means, we refer to \cite{HuckemannSJS} and \cite{HuckemannAOS}. In this paper, we carefully analyze the role of the dimension $n$ and the number of samples $J$ on the consistency of Procrustean means in  model (\ref{eq:modeldeform}). To obtain consistent procedures, we  show that it is not required to impose very restrictive conditions on the error terms $Z_j$ such as  isotropicity for the  $\bzeta_j$ in \eqref{eq:modelKendall} for shape analysis. Here, the key quantity  is the dimension $n$ of the data (number of design points) which plays the central role to guarantee the converge of our estimators. This point is another major difference with the approach of statistical shape analysis \cite{MR1108330} that does not take into account the dimensionality of the shape space to analyze the consistency of Procrustean estimators.

Note that a subclass of the deformable model \eqref{eq:modeldeform} is the so-called shape invariant model (SIM) 
\begin{equation} \label{eq:SIM}
Y_{j}^{\ell} = T_{\btheta_{j}^{\ast}} f(t_{\ell}) +\sigma \varepsilon_{j}^{\ell}, \quad j=1,\ldots,J, \;\text{ and }\; \ell = 1,\ldots,n,
\end{equation}
i.e.\ without incorporating in \eqref{eq:modeldeform} the additive terms $Z_{j}$.

The goal of this paper is twofold. First, we propose a general methodology for estimating $f$ and the $\btheta^{*}_{j}$'s based on observations coming from model (\ref{eq:modeldeform}). For this purpose, we show that an appropriate tool  is the notion of sample Fr\'echet mean of a data set \cite{fre,MR0501230,batach1} that has been widely studied in shape analysis \cite{MR1108330,MR1436569,huilling98,lekum,HuckemannSJS} and more recently in biomedical imaging \cite{Joshi04unbiaseddiffeomorphic,Pennec:2006:ISR:1166859.1166868}. Secondly, we study the consistency of the resulting estimators in various asymptotic settings: either when $n$ and $J$ both tend to infinity, or when $n$ is fixed and $J\to+\infty$, or when $J$ is fixed and $n\to+\infty$.   

\subsection{Organization of the paper}

Section \ref{part:proc} contains a description of our estimating procedure and a review of previous work in mean pattern estimation. In Section \ref{part:lower}, we derive a lower bound for the quadratic risk of estimators of the deformation parameters. In Section \ref{sec:identgen}, we discuss some identifiability issues in model \eqref{eq:modeldeform}. In Section \ref{part:uppershift} we derive  consistency results for the Fr\'echet mean in the case (\ref{eq:modelshift}) of randomly shifted curves. In Section \ref{part:ab} and Section \ref{sec:gen}, we give general conditions to extend these results  to the more general deformable model (\ref{eq:modeldeform}). Section \ref{sec:simus} contains some numerical experiments. A small conclusion with some perspectives are given  in Section \ref{sec:conc}. All proofs are postponed to a technical Appendix.

\section{The estimating procedure} \label{part:proc}

%

\subsection{A dissimilarity measure based on deformation operators}

To define a notion of sample Fr\'echet mean  for curves or images, let us suppose that the family of deformation operators $(T_{\btheta})_{\btheta \in \PP}$ is invertible in the sense that there exists a family of operators $(\tilde{T}_{\btheta})_{\btheta \in \PP}$ such that for any $(\btheta,f) \in \PP \times L^{2}(\Omega) $
$$
\tilde{T}_{\btheta} f \in L^{2}(\Omega) \quad \mbox{ and } \quad  \tilde{T}_{\btheta} T_{\btheta} f = f.
$$
Then, for two functions $f,h \in L^{2}(\Omega)$ introduce the following dissimilarity measure
\begin{equation*}
d^{2}_{T}(h,f) = \inf_{\btheta \in \PP} \int_{\Omega} \left( \tilde{T}_{\btheta} h(t) -f(t) \right)^2 dt.
\end{equation*}
If $d^{2}_{T}(h,f)= 0$ then there exists $\btheta \in \PP$ such that $f =  \tilde{T}_{\btheta} h$ meaning that the functions $f$ and $h$ are equal up to a geometric deformation. Note that $d_{T}$ is not necessarily a distance on $L^{2}(\Omega)$, but it can be used to define a notion of sample Fr\'echet mean of data from model (\ref{eq:modeldeform}). For this purpose  let $\F$ denote a subspace of $L^2(\Omega)$ and suppose that $\hat f_{j}$ are smooth functions in $\F\subset L^{2}(\Omega)$ obtained from the data $Y_{j}^{\ell}$, $\ell=1,\ldots,n$  for $j=1,\ldots,J$, see Section \ref{sec:smoothshifts} and Section \ref{part:smoothing} for precise definitions. Following the definition of a Fr\'echet mean in general metric space \cite{fre}, define an estimator of the mean pattern $f$ as
\begin{equation} \label{eq:meanpattern}  
\hat{f} = \argmin_{f \in\F} \frac{1}{J} \sum_{j=1}^{J} d^{2}_{T}(\hat f_{j},f).
\end{equation}
Note that $\hat{f}$ falls into the category of non-metrical  sample Fr\'echet means whose definitions and asymptotic properties are discussed in \cite{HuckemannSJS} for random variables belonging to Riemannian manifolds. However, unlike the usual approach in shape analysis, the Fr\'echet mean \eqref{eq:meanpattern} is based on smoothed data. In what follows, we show that smoothing is a key preliminary step to obtain the convergence of $\hat{f}$ to the mean pattern $f$  in the deformable model \eqref{eq:modeldeform}. It can be easily shown that the computation of $\hat{f} $ can be done in two steps: first minimize the following criterion 
\begin{equation} \label{eq:crittheta}
(\hat{\btheta}_{1},\ldots,\hat{\btheta}_{J}) = \argmin_{ (\btheta_{1},\ldots,\btheta_{J}) \in  \Theta^J} M(\btheta_{1},\ldots,\btheta_{J}),
\end{equation}
where
\begin{equation} \label{eq:Mcrittheta}
M(\btheta_{1},\ldots,\btheta_{J}) = \frac{1}{J} \sum_{j=1}^{J}   \int_{\Omega} \bigg( \tilde{T}_{\btheta_{j}} \hat f_{j} (t) - \frac{1}{J} \sum_{j'=1}^{J} \tilde{T}_{\btheta_{j'}} \hat f_{j'}(t)  \bigg)^{2} dt,
\end{equation}
which gives an estimation of the deformation parameters $\btheta_{1}^{\ast},\ldots, \btheta_{J}^{\ast}$, and then in a second step take
\begin{equation} \label{eq:frechetmean}
\hat{f}(t) =  \frac{1}{J} \sum_{j=1}^{J} \tilde{T}_{\hat{\btheta}_{j}} \hat f_{j}(t), \quad \mbox{ for }  t \in \Omega,
\end{equation}
as an estimator of the mean pattern $f$. 

Note that this two steps procedure  belongs to the category of Procrustean methods (see e.g \cite{MR1646114,MR1108330}). A similar  approach to (\ref{eq:crittheta}) has been developed by  \cite{Joshi04unbiaseddiffeomorphic} in the context of biomedical images using diffeomorphic deformation operators.

\subsection{Previous work in mean pattern estimation and geometric variability analysis}

Estimating the mean pattern of a set of curves that differ by a time transformation is usually referred to as the curves registration problem, see e.g.\ \cite{kneipgas,B06,ramli,wanggas,liumuller}. However, in these papers, studying consistent estimators of the  mean pattern $f$ as the number of curves $J$ and design points $n$ tend to infinity is not considered. For the SIM \eqref{eq:SIM},  a semiparametric point of view has been proposed in \cite{mazaloubgam} and \cite{vimond} to  estimate non-random deformation parameters (such as shifts and amplitudes) as the number $n$ of observations per curve grows, but with a fixed number $J$ of curves. A generalisation of this semiparametric approach for two-dimensional images is proposed in \cite{BGV09}. The case of image deformations by a Lie group action is also investigated in \cite{BLV10} from a semiparametric point of view using a SIM.

In the simplest case of randomly shifted curves in a SIM, \cite{BG10} have studied minimax estimation of the mean pattern $f$ by letting only the number $J$ of curves going to infinity. Self-modelling regression (SEMOR) methods proposed by \cite{kg} are semiparametric models where each observed curve is a parametric transformation of the same regression function. However, the SEMOR approach does not incorporate a random fluctuations in intensity of the individuals  around a mean pattern $f$ through an unknown process $Z_{j}$ as in model (\ref{eq:modeldeform}). The authors in \cite{kg}  studied the consistency of  the SEMOR approach using a Procrustean algorithm. Recently, there has also been a growing interest on the development of statistical deformable models for image analysis and the construction of consistent estimators of a mean pattern, see \cite{glasbey,BGV09,BGL09,allason,kuhn-allason}.
   
 \section{Lower bounds for the estimation of the deformation parameters}\label{part:lower}

In this section, we derive non-asymptotic lower bounds for the quadratic risk of an arbitrary estimator of the deformation parameters under the following smoothness assumption  of the mapping $(\btheta, t) \longmapsto T_{\btheta}f(t)$.
 \begin{hyp}\label{hyp:op3} 
For all $\btheta = (\theta^{1},\ldots,\theta^{p}) \in\PP$, $ T_{\btheta}: L^2(\Omega)\longrightarrow  L^2(\Omega)$ is a linear operator  
 such that the function $t \longmapsto \partial_{\theta^{p_{1}}} T_{\btheta}f(t)$ exists and belongs to $L^{2}(\Omega)$ for any  $p_{1}=1,\ldots,p$. Moreover, there exists a constant $C(\Theta,f)>0$ such that $$\norm{\partial_{\theta^{p_{1}}} T_{\btheta}f }_{L^2}^2 \leq  C(\Theta,f),$$ for all $ p_1=1,\ldots,p$ and $\btheta\in\Theta$.
\end{hyp}

\subsection{Shape Invariant Model}

\begin{theo}\label{theo:VTSIM}
Consider the SIM \eqref{eq:SIM} and suppose that Assumption \ref{hyp:op3} holds. Assume that $g$ satisfies Assumption \ref{hyp:Theta}, and that $\int_{\Theta} \left\|\partial_{\btheta } \log\left(g(\btheta )\right)\right\|^2 g(\btheta )d\btheta < + \infty$. Let  $\hat{\btheta} \in \PP^{J}$ be any estimator (a measurable function of the data) of $\btheta^* = (\btheta^*_1,\ldots,\btheta^*_J)$. Then, for any $n \geq 1$ and $J \geq 1$,
\begin{align}\label{eq:VT}
\E\left[\frac{1}{J} \snorm{\hat\btheta - \btheta^*}^2_{\R^{pJ}} \right]\geq \frac{\sigma^{2} n^{-1}}{C(\Theta,f) + \sigma^{2}n^{-1}\int_{\Theta}\norm{\partial_{\btheta} \log\left(g(\btheta)\right)}^2 g(\btheta)d\btheta}.
\end{align}
where $C(\Theta,f)$ is the constant defined in Assumption \ref{hyp:op3}, and  $\norm{\cdot}_{\R^{pJ}}$ is the standard Euclidean norm in $\R^{pJ}$.
\end{theo}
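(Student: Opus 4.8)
The plan is to establish this lower bound via the van Trees inequality (a Bayesian version of the Cramér–Rao bound), which is the natural tool when the parameter $\btheta^* = (\btheta^*_1,\ldots,\btheta^*_J)$ is itself random with a known prior density — here the product density $\prod_{j=1}^J g(\btheta_j)$ on $\Theta^J$. The van Trees inequality states that for any estimator $\hat\btheta$ of a random parameter with smooth prior,
\begin{equation*}
\E\left[\snorm{\hat\btheta - \btheta^*}^2\right] \geq \frac{(\dim)^2}{\E[\text{trace of Fisher information of the model}] + \mathcal{I}(\text{prior})},
\end{equation*}
where the expectations are taken jointly over the data and the random parameter. Because the $J$ curves are mutually independent, both the model Fisher information and the prior Fisher information decouple as sums over $j=1,\ldots,J$, so it suffices to compute the per-curve quantities and then divide through by $J$ at the end to match the normalization $\frac1J\E\snorm{\hat\btheta-\btheta^*}^2$ on the left.

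First I would write out, for a single curve $j$, the log-likelihood of the Gaussian observation vector $(Y_j^\ell)_{\ell=1}^n$ conditional on $\btheta_j$: since $Y_j^\ell = T_{\btheta_j}f(t_\ell) + \sigma\varepsilon_j^\ell$ with i.i.d.\ standard normal $\varepsilon_j^\ell$, the conditional density is a product of $n$ Gaussians with mean $T_{\btheta_j}f(t_\ell)$ and variance $\sigma^2$. Differentiating the log-likelihood in the coordinate $\theta^{p_1}_j$ gives a Fisher information whose $(p_1,p_1)$ entry is $\sigma^{-2}\sum_{\ell=1}^n \left(\partial_{\theta^{p_1}} T_{\btheta_j}f(t_\ell)\right)^2$. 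The key step is then to control this discrete sum: under Assumption \ref{hyp:op3}, $\norm{\partial_{\theta^{p_1}} T_{\btheta}f}^2_{L^2}\le C(\Theta,f)$, and one must pass from the discrete sum $\frac1n\sum_\ell(\cdot)^2$ to the integral $\int_\Omega(\cdot)^2$. This is where I expect the main technical friction: strictly speaking one needs either the design points $t_\ell$ to form a suitable quadrature (so $\frac1n\sum_\ell h(t_\ell)^2 \to \int_\Omega h^2$) or a direct bound; presumably the intended reading is that $\frac1n\sum_{\ell=1}^n\left(\partial_{\theta^{p_1}}T_{\btheta}f(t_\ell)\right)^2 \le C(\Theta,f)$ is what Assumption \ref{hyp:op3} effectively grants (or the constant $C(\Theta,f)$ is taken to bound the discrete sums uniformly in $n$ and $\btheta\in\Theta$). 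Granting that, the trace of the per-curve model Fisher information is at most $\sigma^{-2} n\, C(\Theta,f)$ — wait: the normalization in \eqref{eq:VT} has $\sigma^2 n^{-1}$ on top, so the Fisher information must scale like $n$, i.e.\ the sum $\sum_{\ell=1}^n(\cdot)^2$ is bounded by $n\,C(\Theta,f)$, giving trace $\le n\sigma^{-2}C(\Theta,f)$ per curve.

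Next, the prior Fisher information for a single $\btheta_j$ is exactly $\int_\Theta \norm{\partial_\btheta \log g(\btheta)}^2 g(\btheta)\,d\btheta$, which is finite by hypothesis; Assumption \ref{hyp:Theta} guarantees $g$ is $\mathcal{C}^1$ with compact rectangular support $\Theta$, so the boundary terms arising in the integration by parts underlying van Trees vanish (the density and its derivative are well-behaved and the box geometry of $\Theta$ makes the surface integrals manageable — this is the standard regularity condition for van Trees). Assembling: summing over the $J$ independent curves multiplies both the numerator dimension count and the two information terms by $J$ in a way that, after dividing by $J$, yields
\begin{equation*}
\E\left[\frac1J\snorm{\hat\btheta-\btheta^*}^2_{\R^{pJ}}\right] \geq \frac{1}{n\sigma^{-2}C(\Theta,f) + \int_\Theta\norm{\partial_\btheta\log g(\btheta)}^2 g(\btheta)\,d\btheta},
\end{equation*}
and multiplying numerator and denominator by $\sigma^2 n^{-1}$ gives exactly \eqref{eq:VT}. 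The only genuinely delicate point, as noted, is the discrete-to-continuous passage for the model Fisher information; everything else is the routine van Trees machinery plus the tensorization over $j$.
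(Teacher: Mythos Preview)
Your approach is essentially identical to the paper's: the paper also applies the van Trees inequality (spelled out explicitly via the Cauchy--Schwarz step $(\E[U'V])^2\le\E[U'U]\,\E[V'V]$ with $U=\hat\btheta-\btheta^*$ and $V=\nabla_{\btheta^*}\log(p(\Y\mid\btheta^*)g^J(\btheta^*))$, citing Gill--Levit), computes $\E[U'V]=pJ$ by integration by parts using the boundary vanishing of $g$ from Assumption~\ref{hyp:Theta}, bounds the model Fisher information per coordinate by $n\sigma^{-2}C(\Theta,f)$ via Assumption~\ref{hyp:op3}, and finishes by dropping the factors of $p\ge1$. On the point you flagged as the ``main technical friction'': the paper makes exactly the identification you suspected --- it writes $\snorm{\partial_{\theta^{p_1}}\T_{\btheta}\f}_{\R^n}^2\le nC(\Theta,f)$ and attributes this directly to Assumption~\ref{hyp:op3}, without further comment on the discrete-versus-$L^2$ passage.
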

\noindent 
The lower bound given in inequality \eqref{eq:VT} does not decrease as $J$ increases. Thus, if the number $n$  of design points is fixed, increasing the number $J$ of curves (or images) does not improve the quality of the estimation of the deformation parameters for any estimator $\hat\btheta$. Nevertheless, this lower bound is going to 0 as the dimension $n\to+\infty$. 

\subsection{General model} 

The main difference between  the general model \eqref{eq:modeldeform}  and the SIM \eqref{eq:SIM} is the extra error terms $T_{\btheta_j^*}Z_j$, $j=1,\ldots,J$.  In what follows, $\E_{\btheta}[\ \cdot\ ]$ denotes expectation conditionally to $\btheta\in\Theta^J$. Since the random processes $Z_j$'s are observed through the action of the random deformation operators $T_{\btheta_{j}^*}$ it is necessary to specify how the $T_{\btheta_{j}^*}$'s modify the law of the process $Z_{j}$.

\begin{hyp}\label{hyp:Zmin}
There exists  a positive semi-definite  symmetric $n \times n$ matrix $\mathbf \Sigma_n(\Theta)$  such that  the covariance matrix of $\bZ=[Z(t_\ell)]_{\ell=1}^n$ satisfies $\Ec_{\btheta} \big[\T_{\btheta}\bZ (\T_{\btheta} \bZ)'\big] = \mathbf \Sigma_n(\Theta)$. 
\end{hyp}
This assumption means that the law of the random process $Z$ is somewhat invariant by the deformation operators $T_{\btheta}$. Such an hypothesis is similar to the condition given in \cite{MR1436569} to ensure consistency of Fr\'echet mean estimators in Kendall's shape space using model similar to \eqref{eq:modeldeform} with $\sigma=0$. After a normalization step, the deformations considered in \cite{MR1436569} are rotations of the plane, and the authors in \cite{MR1436569} study the case where the law of the error term $Z$ is isotropic, that is to say, invariant by the action of rotations.
\begin{theo}\label{theo:VTGEN}
Consider the general model \eqref{eq:modeldeform}. Suppose that Assumption \ref{hyp:op3} and \ref{hyp:Zmin} hold. 
Assume that the density $g$ satisfies Assumption \ref{hyp:Theta} and that $\int_{\Theta} \left\|\partial_{\btheta } \log\left(g(\btheta )\right)\right\|^2 g(\btheta )d\btheta < + \infty$. Let  $\hat{\btheta} \in \PP^{J}$ be any estimator (a measurable function of the data) of $\btheta^*  = (\btheta^*_1,\ldots,\btheta^*_J)$. Then, for any $n \geq 1$ and $J \geq 1$, we have 
\begin{align}\label{eq:VTGEN}
\E\left[\frac{1}{J} \snorm{\hat\btheta - \btheta^*}^2_{\R^{pJ}} \right]\geq \frac{(\sigma^2 + s^2_n(\Theta)) n^{-1}}{C(\Theta,f) + (\sigma^2+ s^2_n(\Theta))n^{-1}\int_{\Theta}\left\|\partial_{\btheta_1} \log\left(g(\btheta)\right)\right\|^2 g(\btheta)d\btheta}.
\end{align}
where $C(\Theta,f)$ is the constant defined in Assumption \ref{hyp:op3}, and $s_n^2(\Theta)$ denotes the smallest eigenvalue of $\mathbf \Sigma_n(\Theta) $.
\end{theo}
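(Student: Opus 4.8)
The plan is to obtain \eqref{eq:VTGEN} as a consequence of the SIM bound \eqref{eq:VT} in Theorem \ref{theo:VTSIM}, by viewing the extra term $T_{\btheta_j^*}Z_j$ as an additional Gaussian component of the noise and tracking how it modifies the information bound. Concretely, I would work conditionally on $\btheta = (\btheta_1^*,\ldots,\btheta_J^*) \in \Theta^J$ and condition also on the $Z_j$'s being absent for a moment: the van Trees inequality argument underlying \eqref{eq:VT} is a Bayesian Cram\'er--Rao bound in which the denominator is (a bound on) the total Fisher information for $\btheta^*$, namely $n\,C(\Theta,f)/\sigma^2$ coming from the data plus $\int_\Theta \|\partial_\btheta \log g\|^2 g$ coming from the prior. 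The key observation is that, under Assumption \ref{hyp:Zmin}, conditionally on $\btheta$ the vector $(Y_j^\ell)_\ell$ is Gaussian with mean $T_{\btheta_j^*}f(t_\ell)$ and covariance $\sigma^2 \bI_n + \mathbf\Sigma_n(\Theta)$, so the per-curve noise level $\sigma^2$ in the SIM is effectively replaced by the matrix $\sigma^2\bI_n + \mathbf\Sigma_n(\Theta)$.

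The main computation is therefore to bound the Fisher information of this Gaussian-noise model for the parameter $\btheta_j^*$ in direction $\theta^{p_1}$. Writing $\bsig := \sigma^2 \bI_n + \mathbf\Sigma_n(\Theta)$ and $u := [\partial_{\theta^{p_1}} T_\btheta f(t_\ell)]_\ell \in \R^n$, the Fisher information contribution of one observed curve is $u' \bsig^{-1} u$ (the mean depends on $\btheta$, the covariance does not, once we have fixed $\Theta$). Since $\bsig \succeq (\sigma^2 + s_n^2(\Theta))\bI_n$, we get $u'\bsig^{-1} u \leq (\sigma^2 + s_n^2(\Theta))^{-1} \|u\|^2 = (\sigma^2 + s_n^2(\Theta))^{-1}\|\partial_{\theta^{p_1}} T_\btheta f\|_{L^2}^2 \cdot (1+o(1))$ — more precisely one uses that $\frac1n\|u\|^2$ is a Riemann-sum approximation of $\|\partial_{\theta^{p_1}}T_\btheta f\|_{L^2}^2 \le C(\Theta,f)$, exactly as in the proof of Theorem \ref{theo:VTSIM}. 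So each curve carries at most $n\, C(\Theta,f)/(\sigma^2 + s_n^2(\Theta))$ units of information, and summing the van Trees inequality over the $J$ coordinates (which decouple, since each $\btheta_j^*$ enters only the $j$-th curve and the prior factorizes) gives
\begin{align*}
\E\!\left[\tfrac1J\snorm{\hat\btheta-\btheta^*}_{\R^{pJ}}^2\right] \geq \frac{1}{\dfrac{n\,C(\Theta,f)}{\sigma^2+s_n^2(\Theta)} + \displaystyle\int_\Theta \|\partial_{\btheta_1}\log g(\btheta)\|^2 g(\btheta)\,d\btheta},
\end{align*}
which rearranges to \eqref{eq:VTGEN}.

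The steps, in order: (i) fix $\Theta$ and condition on $\btheta \in \Theta^J$; invoke Assumption \ref{hyp:Zmin} to identify the conditional law of the data as Gaussian with covariance $\sigma^2\bI_n + \mathbf\Sigma_n(\Theta)$; (ii) set up the van Trees / Bayesian Cram\'er--Rao inequality with a smooth prior supported slightly inside $\Theta$ built from $g$ (handling the boundary of $\Theta$ by the usual truncation/mollification device, using Assumption \ref{hyp:Theta} and the finiteness of $\int_\Theta\|\partial_\btheta\log g\|^2 g$); (iii) compute the Fisher information of the Gaussian-mean model and bound it using $\mathbf\Sigma_n(\Theta) \succeq s_n^2(\Theta)\bI_n$ together with Assumption \ref{hyp:op3}; (iv) combine and rearrange. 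In fact, since the arithmetic in steps (ii)--(iv) is identical to that of Theorem \ref{theo:VTSIM} with $\sigma^2$ replaced by $\sigma^2 + s_n^2(\Theta)$, I expect to be able to quote most of that proof verbatim. The one genuine obstacle is step (iii): one must be careful that the \emph{mean} of the Gaussian data depends on $\btheta$ through $T_\btheta f$ while the \emph{covariance} $\mathbf\Sigma_n(\Theta)$, by Assumption \ref{hyp:Zmin}, does not depend on the particular value of $\btheta$ inside $\Theta$ — so there is no extra Fisher-information term from a $\btheta$-dependent covariance — and one must lower-bound $\bsig^{-1}$ correctly (i.e.\ use the \emph{smallest} eigenvalue $s_n^2(\Theta)$ of $\mathbf\Sigma_n(\Theta)$, so that $\bsig \preceq$-type control goes the right way to make the information small and hence the risk bound large). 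Everything else is a routine repetition of the van Trees argument already carried out for the SIM.
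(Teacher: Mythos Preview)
Your proposal is correct and follows essentially the same route as the paper: condition on $\btheta^*$, use Assumption~\ref{hyp:Zmin} to identify the conditional covariance as $\sigma^2\bI_n+\mathbf\Sigma_n(\Theta)$ (independent of the particular $\btheta^*$), bound the resulting Fisher information via $\bsig^{-1}\preceq(\sigma^2+s_n^2(\Theta))^{-1}\bI_n$ and Assumption~\ref{hyp:op3}, and then repeat verbatim the van Trees computation of Theorem~\ref{theo:VTSIM} with $\sigma^2$ replaced by $\sigma^2+s_n^2(\Theta)$. The only superfluous step is your truncation/mollification of the prior: Assumption~\ref{hyp:Theta} already gives $g$ continuously differentiable with compact support in $\PP$, so $g$ vanishes on $\partial\Theta$ and the boundary terms in the integration by parts drop out directly, exactly as in the paper's proof of Theorem~\ref{theo:VTSIM}.
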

 Again, the lower bound \eqref{eq:VTGEN} does not depends on $J$. Thus, increasing the number $J$  of observations does not decrease the quadratic risk of any estimator of the deformations parameters. Moreover, the lower bound \eqref{eq:VTGEN} tends to zero as $n \to + \infty$ only if $\lim_{n \to + \infty} n^{-1}s^2_n(\Theta) = 0$. 

\subsection{Application to the shifted curves model}\label{part:lowershift}

Consider the shifted curves model \eqref{eq:modelshift} with an equi-spaced design, namely
\begin{equation}
Y_{j}^\ell = f\left(\tfrac{\ell}{n} - \btheta^*_j\right) + Z_j\left(\tfrac{\ell}{n} - \btheta^*_j\right) +\sigma\varepsilon_j^\ell, \quad j=1,\ldots,J,\; \text{ and } \; \ell=1,\ldots,n. \label{eq:modrandshift}
\end{equation}
\begin{theo}\label{theo:vantreeshift}
Consider the model \eqref{eq:modrandshift}. Assume that $f$ is continuously differentiable on $[0,1]$ and that $Z$ is a centered stationary process with value in $L^2_{per}([0,1])$. Suppose that  $\Theta = [-\rho,\rho]$ with $\rho<\tfrac{1}{2}$ 
 and 
$\int_{\Theta}\left(\partial_{\btheta } \log\left(g(\btheta )\right)\right)^2 g(\btheta )d\btheta < + \infty$. Let $\hat{\btheta} \in \R^J$ be any estimator of the true random shifts $\btheta^*  = (\btheta^*_1,\ldots,\btheta^*_J)$, i.e.\ a measurable function of the data in model (\ref{eq:modrandshift}). Then, for any $n \geq 1$ and $J \geq 1$
\begin{align}\label{eq:vantreeshift}
\E\left[\frac{1}{J} \|\hat\btheta - \btheta^*\|^2_{\R^{J}} \right] \geq\frac{n^{-1}\sigma^2}{ \norm{\partial_{t} f}_{\infty}^2  + n^{-1}\sigma^2\int_{\Theta}\left(\partial_{\btheta } \log\left(g(\btheta )\right)\right)^2 g(\btheta )d\btheta },
\end{align}
where $\norm{\partial_{t} f}_{\infty} = \sup_{t \in [0,1]} \left\{ |\partial_{t} f(t) | \right\}$ 
with $\partial_{t} f$ denoting the first derivative of $ f$.
\end{theo}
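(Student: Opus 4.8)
The plan is to deduce Theorem \ref{theo:vantreeshift} as a special case of Theorem \ref{theo:VTGEN} by carefully identifying the relevant quantities in the shifted curves model \eqref{eq:modrandshift}. First I would set $p=1$, $\PP$ an open interval containing $\Theta=[-\rho,\rho]$, and recognize that the deformation operator here is $T_{\btheta}f(t)=f(t-\btheta)$ acting on $L^2_{per}([0,1])$, with design points $t_\ell=\ell/n$. The two things to check are: (i) Assumption \ref{hyp:op3} holds with an explicit constant $C(\Theta,f)$, and (ii) Assumption \ref{hyp:Zmin} holds, with control on the smallest eigenvalue $s_n^2(\Theta)$ of $\mathbf\Sigma_n(\Theta)$.

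For (i), since $\partial_\theta T_\theta f(t) = -\partial_t f(t-\theta)$, I would write $\norm{\partial_\theta T_\theta f}_{L^2}^2 = \int_0^1 (\partial_t f(t-\theta))^2\,dt$, and by periodicity of $f$ (and hence of $\partial_t f$) this integral equals $\int_0^1 (\partial_t f(s))^2\,ds \le \norm{\partial_t f}_\infty^2$ for every $\theta$. So Assumption \ref{hyp:op3} holds with $C(\Theta,f)=\norm{\partial_t f}_\infty^2$; note the periodicity is what makes the bound uniform in $\theta$ and independent of $\Theta$. For (ii), because $Z$ is a centered \emph{stationary} process on $[0,1]_{per}$, the law of the shifted process $t\mapsto Z(t-\theta)$ is the same as the law of $Z$ for each fixed $\theta$; hence the covariance matrix $\E_{\btheta}[\T_{\btheta}\bZ(\T_{\btheta}\bZ)']$ does not depend on $\btheta$ and equals the fixed matrix $\bsig_n:=[\cov(Z(t_\ell),Z(t_{\ell'}))]_{\ell,\ell'}$, which is positive semi-definite and symmetric. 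This gives Assumption \ref{hyp:Zmin} with $\mathbf\Sigma_n(\Theta)=\bsig_n$ and $s_n^2(\Theta)$ its smallest eigenvalue.

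Now I would invoke Theorem \ref{theo:VTGEN} with $p=1$: the integrability hypothesis $\int_\Theta (\partial_\btheta\log g)^2 g\,d\btheta<\infty$ is exactly the one assumed, and \eqref{eq:VTGEN} reads
\begin{equation*}
\E\left[\frac{1}{J}\|\hat\btheta-\btheta^*\|^2_{\R^J}\right] \ge \frac{(\sigma^2+s_n^2(\Theta))n^{-1}}{\norm{\partial_t f}_\infty^2 + (\sigma^2+s_n^2(\Theta))n^{-1}\int_\Theta(\partial_\btheta\log g(\btheta))^2 g(\btheta)\,d\btheta}.
\end{equation*}
The final step is to observe that the right-hand side is a nondecreasing function of the quantity $\sigma^2+s_n^2(\Theta)$ — indeed $x\mapsto \frac{xn^{-1}}{A+xn^{-1}B}$ is increasing in $x>0$ since $A,B\ge 0$ — and since $s_n^2(\Theta)\ge 0$, replacing $\sigma^2+s_n^2(\Theta)$ by the smaller value $\sigma^2$ only decreases the bound, yielding exactly \eqref{eq:vantreeshift}.

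I do not expect a serious obstacle here; the only points requiring a little care are the uses of periodicity (to get both a $\theta$-uniform bound in (i) and the $\theta$-invariance of the covariance in (ii) from stationarity rather than mere second-order structure) and the monotonicity argument that lets one discard the nonnegative term $s_n^2(\Theta)$ at the end. One should also double-check that the condition $\rho<\tfrac12$ together with $\Theta=[-\rho,\rho]$ places $\Theta$ strictly inside an admissible open set $\PP$ so that the earlier theorems apply, and that $f$ continuously differentiable on $[0,1]$ with periodic boundary values makes $\norm{\partial_t f}_\infty$ finite.
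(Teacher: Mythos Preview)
Your proposal is correct and follows essentially the same route as the paper: verify Assumption \ref{hyp:op3} with $C(\Theta,f)=\norm{\partial_t f}_\infty^2$ via the derivative computation and periodicity, verify Assumption \ref{hyp:Zmin} via stationarity of $Z$, and then invoke Theorem \ref{theo:VTGEN}. Your explicit monotonicity argument for discarding $s_n^2(\Theta)\ge 0$ is a detail the paper leaves implicit when it says the result ``follows as an application of Theorem \ref{theo:VTGEN}.''
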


\section{Identifiability conditions} \label{sec:identgen}

\subsection{The shifted curves model}\label{part:identshift}

Without any further assumptions, the randomly shifted curves model (\ref{eq:modrandshift}) is not identifiable. Indeed, if $\btheta_{0} \in \Theta$ satisfies $\btheta^{*}_{j}+\btheta_{0} \in \Theta$, $j=1,\ldots,J$, then replacing $f(\cdot)$ by $f(\cdot- \btheta_{0})$ and $\btheta^{*}_{j}$ by  $\btheta^{*}_{j}+\btheta_{0}$ does not change the formulation of  model (\ref{eq:modrandshift}). Choosing identifiability conditions amounts to impose constraints on the minimization of the criterion 
\begin{align}\label{eq:Dshift}
D(\btheta) & =  \frac{1}{J} \sum_{j=1}^{J}   \int_{\Omega} \bigg( f(t-\btheta^{*}_{j}+\btheta_{j}) - \frac{1}{J} \sum_{j'=1}^{J} f(t-\btheta^{*}_{j'}+\btheta_{j'})\bigg)^{2} dt,
\end{align}
for $\btheta = (\btheta_{1},\ldots,\btheta_{J}) \in  \Theta^{J}$, which can be interpreted as a version without noise of the criterion \eqref{eq:crittheta} using the ideal smoothers $\hat{f}_{j}(\cdot) = f(\cdot-\btheta^{*}_{j})$.  Obviously,  the criterion $D(\btheta)$ has a minimum at $\btheta^{\ast} = (\btheta_{1}^{\ast},\ldots,\btheta_{J}^{\ast})$ such that $D(\btheta^{\ast}) = 0$, but this minimizer of $D$ on $ \Theta^{J}$  is clearly not unique. If the true shifts are supposed to have zero mean (i.e.\  $\int_{\Theta} \btheta  g(\btheta)  d \btheta = 0$) it is natural to introduce the constrained set
\begin{equation}
\bTheta_0 = \{(\btheta_1,\ldots,\btheta_J)\in \Theta^J, \ \btheta_1+\ldots+\btheta_J = 0  \}.  \label{eq:Theta0}
\end{equation}
It is shown in \cite{BG10} Lemma 6, that if  $f \in L^{2}([0,1])$ is such that $\int_0^1 f(t) e^{-i2\pi t}dt \neq 0$ and if $\rho < 1/4$ (recall that $\Theta = [-\rho,\rho]$), then 
the criterion $D(\btheta)$ has a unique minimum on $\bTheta_0$ in the sense that $D(\btheta) > D(\btheta^*_{\bTheta_0})$ for all $\btheta \in \bTheta_0$ with $\btheta \neq \btheta^*_{\bTheta_0}$ where
\begin{equation}
\btheta^*_{\bTheta_0} = (\btheta^*_1 -\bar \btheta^*,\ldots,\btheta_{J}^* - \bar\btheta^*) \mbox{ with } \bar\btheta^* = \frac{1}{J}\sum_{j=1}^J \btheta_j^*. \label{eq:theta0}
\end{equation}
Under such assumptions, we will compute estimators of the random shifts by minimizing the criterion  \eqref{eq:crittheta} over the constrained set $\bTheta_0$ and not directly on $ \Theta^{J}$. Consistency of such constrained estimators will then be studied under the following identifiability conditions:
\begin{hyp}\label{hyp:f}
The mean pattern $f$ is such that $\int_0^1 f(t) e^{-i2\pi t}dt \neq 0$.
\end{hyp}
\begin{hyp}\label{hyp:g}
The support of the density $g$ is included in $[-\rho',\rho']$ for some $0 < \rho' \leq \frac{ \rho }{2} <1/4$ and is such that $\int_{\Theta} \btheta  g(\btheta)  d \btheta = 0$.
\end{hyp}
Under such assumptions, $D(\btheta)$ can be bounded from below by the quadratic function $ \frac{1}{J} \snorm{\btheta - \btheta_{\bTheta_0}^* }^{2}$ which will be an important property to derive consistent estimators.
\begin{proposition} \label{prop:D}
 Suppose that Assumptions \ref{hyp:f} and \ref{hyp:g} hold with $\rho < 1/16$. Then,  for any $\btheta = (\btheta_{1},\ldots,\btheta_{J}) \in \bTheta_0$, one has that
$$
D(\btheta) - D(\btheta^*_{\bTheta_0}) \geq C(f,\rho) \frac{1}{J} \snorm{\btheta - \btheta_{\bTheta_0}^* }^{2},
$$
where $C(f,\rho) > 0$ is a constant depending only on $f$ and $\rho$.
\end{proposition}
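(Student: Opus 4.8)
The plan is to diagonalise $D$ by Fourier series and reduce the statement to an elementary comparison between the empirical variance of the effective shifts and that of their images under the unit‑circle map $t\mapsto e^{2i\pi t}$. Write $c_{k}(f)=\int_{0}^{1}f(t)e^{-i2\pi kt}\,dt$, $k\in\Z$, for the Fourier coefficients of $f\in L^{2}_{per}([0,1])$, and for $\btheta\in\bTheta_0$ set $\gamma_{j}=\btheta_{j}-\btheta_{j}^{*}$, so that $f(\,\cdot\,-\btheta_{j}^{*}+\btheta_{j})=f(\,\cdot\,+\gamma_{j})$ has $k$-th Fourier coefficient $c_{k}(f)e^{i2\pi k\gamma_{j}}$. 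Applying Parseval's identity in each integral of \eqref{eq:Dshift} and averaging over $j$ gives
\[
D(\btheta)=\sum_{k\in\Z}|c_{k}(f)|^{2}\,V_{k}(\btheta),\qquad
V_{k}(\btheta):=\frac{1}{J}\sum_{j=1}^{J}\Bigl|\,e^{i2\pi k\gamma_{j}}-\tfrac1J\textstyle\sum_{j'=1}^{J}e^{i2\pi k\gamma_{j'}}\Bigr|^{2},
\]
where $V_{k}(\btheta)\ge 0$ is the empirical variance of $\{e^{i2\pi k\gamma_{j}}\}_{j=1}^{J}$. For $\btheta=\btheta^{*}_{\bTheta_0}$ one has $\gamma_{j}=-\bar\btheta^{*}$ for all $j$, hence $V_{k}(\btheta^{*}_{\bTheta_0})=0$ and $D(\btheta^{*}_{\bTheta_0})=0$, so it remains to bound $D(\btheta)$ from below for $\btheta\in\bTheta_0$. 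Keeping only the $k=1$ term and using that $c_{1}(f)\ne 0$ by Assumption~\ref{hyp:f}, one gets $D(\btheta)\ge|c_{1}(f)|^{2}\,V_{1}(\btheta)$.

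Next I would rewrite $\tfrac1J\snorm{\btheta-\btheta^{*}_{\bTheta_0}}^{2}$ as the empirical variance of the $\gamma_{j}$'s. Since $\btheta\in\bTheta_0$ we have $\sum_{j}\btheta_{j}=0$, hence $\tfrac1J\sum_{j}\gamma_{j}=-\bar\btheta^{*}$, and from \eqref{eq:theta0} the $j$-th coordinate of $\btheta-\btheta^{*}_{\bTheta_0}$ equals $\btheta_{j}-\btheta_{j}^{*}+\bar\btheta^{*}=\gamma_{j}-\tfrac1J\sum_{j'}\gamma_{j'}$; thus $\tfrac1J\snorm{\btheta-\btheta^{*}_{\bTheta_0}}^{2}$ is exactly the empirical variance of $\{\gamma_{j}\}_{j}$. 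Writing both variances via the polarisation identity $\Var(z)=\tfrac1{2J^{2}}\sum_{i,j}|z_{i}-z_{j}|^{2}$, the proposition reduces to the pointwise bound $\bigl|e^{i2\pi\gamma_{i}}-e^{i2\pi\gamma_{j}}\bigr|^{2}\ge\kappa(\rho)\,(\gamma_{i}-\gamma_{j})^{2}$ for all $i,j$, after which one takes $C(f,\rho)=\kappa(\rho)\,|c_{1}(f)|^{2}$.

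For this last bound I would use $|e^{ix}-e^{iy}|^{2}=4\sin^{2}\!\bigl(\tfrac{x-y}{2}\bigr)$ and $|\sin u|\ge\tfrac2\pi|u|$ for $|u|\le\tfrac\pi2$. This is the only place where the smallness of $\rho$ matters: Assumption~\ref{hyp:g} gives $|\btheta_{j}^{*}|\le\rho'\le\rho/2$ while $\btheta_{j}\in\Theta=[-\rho,\rho]$, so $|\gamma_{i}-\gamma_{j}|\le 3\rho$, and $\rho<1/16$ forces $\pi|\gamma_{i}-\gamma_{j}|\le\pi/2$ (indeed well below), so the phases never wrap around and $\sin$ is uniformly comparable to its argument on the relevant interval. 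This yields $\bigl|e^{i2\pi\gamma_{i}}-e^{i2\pi\gamma_{j}}\bigr|^{2}\ge 16\,(\gamma_{i}-\gamma_{j})^{2}$, hence $V_{1}(\btheta)\ge 16\cdot\tfrac1J\snorm{\btheta-\btheta^{*}_{\bTheta_0}}^{2}$ and finally $D(\btheta)\ge 16\,|c_{1}(f)|^{2}\,\tfrac1J\snorm{\btheta-\btheta^{*}_{\bTheta_0}}^{2}$, so one may take $C(f,\rho)=16\,|c_{1}(f)|^{2}>0$ (a sharper constant follows from $u\mapsto\sin(u)/u$ being decreasing on $[0,\pi]$, but this is not needed).

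The argument is elementary once the Fourier decomposition is in place; the only genuine points are the identification of $\tfrac1J\snorm{\btheta-\btheta^{*}_{\bTheta_0}}^{2}$ with the empirical variance of the effective shifts, which crucially uses the constraint $\btheta\in\bTheta_0$, and the verification that $\rho$ small keeps every phase difference inside $[-\pi/2,\pi/2]$ so that $\gamma\mapsto e^{2i\pi\gamma}$ does not contract distances. I do not anticipate any serious obstacle beyond this.
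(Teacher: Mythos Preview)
Your proof is correct and follows essentially the same route as the paper: Fourier expansion of $D$, retention of the $k=1$ term using Assumption~\ref{hyp:f}, the pairwise--difference (polarisation) identity to pass from the empirical variance of $\{e^{i2\pi\gamma_j}\}$ to that of $\{\gamma_j\}$, and an elementary trigonometric comparison valid because the phase differences stay small. The only cosmetic difference is that the paper bounds $1-\cos(2\pi u)$ below via a second--order Taylor estimate $\cos(2\pi u)\le 1-2\pi^2\cos(8\pi\rho)\,u^2$ (which is where the threshold $\rho<1/16$ enters), whereas you use the equivalent form $1-\cos=2\sin^2$ together with Jordan's inequality $|\sin u|\ge\tfrac{2}{\pi}|u|$; your version in fact only needs $3\pi\rho\le\pi/2$, i.e.\ $\rho\le 1/6$, and yields an explicit constant $16\,|c_1(f)|^2$ independent of $\rho$.
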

Assumption \ref{hyp:g} and the condition that $\rho < 1/16$ in Proposition \ref{prop:D} mean that the support of the density $g$ of the shifts is sufficiently small, and that the shifted curves $f_{j}(t) = f(t-\btheta_{j}^{\ast})$ are in some sense concentrated around the mean pattern $f$. Such an assumption of concentration of the data around the same mean pattern has been used in various papers to prove the uniqueness and the consistency of Fr\'echet means for random variables lying in a Riemannian manifold, see   \cite{MR0442975,huilling98,batach1,Afsari, WKendall}.

\subsection{The general case}\label{part:ind}

  In the case of general deformation operators, define for $\btheta = (\btheta_{1},\ldots,\btheta_{J}) \in  \Theta^{J}$ the criterion
\begin{align}\label{eq:D}
D(\btheta) =  \frac{1}{J} \sum_{j=1}^{J}   \int_{\Omega} \bigg(\tilde T_{\btheta_{j}} T_{\btheta^{*}_{j}}f(t) - \frac{1}{J} \sum_{j'=1}^{J} \tilde T_{\btheta_{j'}} T_{\btheta^{*}_{j'}}f(t)\bigg)^{2} dt.
\end{align}
Obviously, using that for all $\btheta\in\Theta$,  $\tilde T_{\btheta} T_{\btheta} f = f$, the criterion $D(\btheta)$ has a minimum at $\btheta^{\ast} = (\btheta_{1}^{\ast},\ldots,\btheta_{J}^{\ast})$ such that $D(\btheta^{\ast}) = 0$. However, without any further restrictions the minimizer of $D(\btheta)$ is not necessarily unique on $ \Theta^{J}$.  
\begin{hyp}\label{hyp:unic}
Let $\bTheta\subset\Theta^J$ such that there exists a unique $\btheta^*_{\bTheta}\in\bTheta$ satisfying  $D(\btheta^*_{\bTheta})=0$.
\end{hyp}
\noindent Then, $\bTheta$ is the set onto which we will carry the minimization of the criterion $M(\btheta)$ \eqref{eq:Mcrittheta}. In the case of shifted curves and under Assumption \ref{hyp:f} and \ref{hyp:g}, the only  set onto which  the criterion $D$ vanishes is the line $\left\{\btheta^* + \btheta_{0} \1_J ,  \; \btheta_{0} \in \R  \right\} \subset \R^J,$  where $\1_J=(1,\ldots,1)' \in\R^J$. An easy way to choose the set $\bTheta$ is to take a linear subset of $\Theta^J$, see Figure \ref{fig:identif} for an illustration. By considering the subset 
$$
\bTheta_0 = \Theta^J \cap \1^{\perp}_J = \{(\btheta_1,\ldots,\btheta_J)\in \Theta^J, \ \btheta_1+\ldots+\btheta_J = 0  \},
$$
 where ${\1_J}^\perp$ is the orthogonal of $\1_J$ in $\R^J$, then Assumption \ref{hyp:unic} is satisfied with $\btheta_{\bTheta_0}^*$ given in \eqref{eq:theta0}. More generally, if the deformation parameters $\btheta_{j}$, $j=1,\ldots,J$ are supposed to be random variables with zero mean, then optimizing $D(\btheta)$ on $\bTheta_0$ is a natural choice.  Another identifiability condition for shifted curves is proposed in \cite{mazaloubgam} and \cite{vimond} by taking 
\begin{equation} \label{eq:Theta1}
\bTheta_1 =  \Theta^J \cap {e_1}^\perp =\{ (\btheta_1,\ldots,\btheta_J)\in \Theta^J, \ \btheta_1 = 0  \}.
\end{equation}
where $e_1 = (1,0,\ldots,0)\in\R^J$. In this case, $\btheta_{\bTheta_1}^* = (0, \btheta_2^* -\btheta_1^*, \ldots, \btheta_J^* -\btheta_1^*) $. Choosing to minimize $D(\btheta)$ on $\bTheta_1$ amounts to choose the first curve as a reference onto which all the others curves are aligned, meaning that the first shift $\btheta^{\ast}_{1}$ is not random, see Figure \ref{fig:identif}.
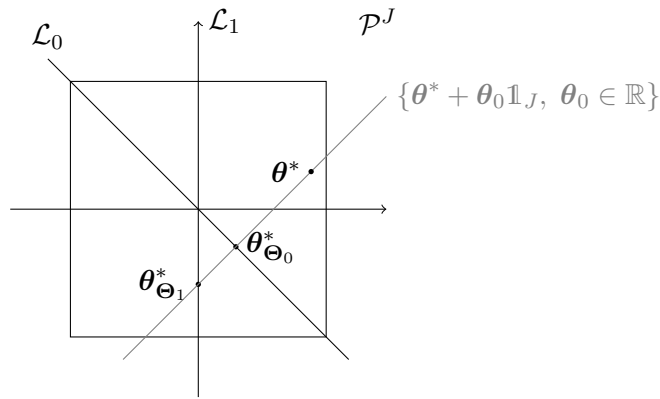
\begin{figure}[!h]
\begin{center}
\begin{tikzpicture}[scale=0.5]
\draw (-3.4,-3.4) rectangle (3.4,3.4);
\draw[->] (-5,0) -- (5,0);
\draw[->] (0,-5) -- (0,5)node[right]{$\LL_1$};
\draw (-4,4) node[above]{$\LL_0$}-- (4,-4);
\path (4,5) node[right]{$\PP^J$};
\fill (1,-1) circle (2pt) node[right]{$\btheta_{\bTheta_0}^*$};
\fill[color=black] (0,-2) circle (2pt) node[left]{$\btheta_{\bTheta_1}^*$};
\draw[color=gray] (-2,-4) -- (5,3)node[right]{$\left\{\btheta^* + \btheta_{0} \1_J ,  \; \btheta_{0} \in \R  \right\}$};
\fill (3,1) circle (2pt) node[left]{$\btheta^*$};
\end{tikzpicture}
\end{center}
\caption{Choice of identifiability conditions for shifted curves in the case $J=2$.  } \label{fig:identif}
\end{figure}

Following the classical guidelines in M-estimation (see e.g. \cite{VdW}), a necessary condition to ensure the convergence of M -estimators such as \eqref{eq:crittheta} is that the local minima of $D(\btheta)$ over $\bTheta$ are well separated from the global minimum of $D(\btheta)$ at $\btheta = \btheta_{\bTheta}^*$ (satisfying $D(\btheta_{\bTheta}^* ) = 0$). The following assumption can be interpreted in this sense.
\begin{hyp}\label{hyp:ident}
For all $\btheta\in\bTheta$ we have
\begin{equation}\label{eq:ident}
D(\btheta) - D(\btheta^*_{\bTheta}) \geq  C(\bTheta,\F) \frac{1}{J}\snorm{\btheta - \btheta^*_{\bTheta} }^2
\end{equation}
for a constant $C(\bTheta,\F) >0$  independent of $J$.
\end{hyp}
\noindent 
In the shifted curve model, Assumption \ref{hyp:ident} is verified if Assumption \ref{hyp:f} and \ref{hyp:g} hold (see Proposition \ref{prop:D}).

\section{Consistent estimation in the shifted curves model}\label{part:uppershift}

In this section, we give conditions to ensure consistency of the estimators defined in Section \ref{part:proc} in  the shifted curves model  \eqref{eq:modrandshift} with an equi-spaced design.

\subsection{The random perturbations $Z_j$}\label{part:Zshift}

Following the assummtions of Theorem \ref{theo:vantreeshift}, $Z$  will be  supposed to be a stationary process $Z$ with covariance function $R:[0,1]\longrightarrow\R$. The law of $Z$ is thus invariant by the action of a shift. Conditionally  to $\btheta_j^{\ast} \in  \Theta$, the covariance of the vector $\T_{\btheta_j^{\ast}}\bZ_j = \big[Z_j(\frac{\ell}{n} - \btheta_j^{\ast})\big]_{\ell=1}^n$ is a Toeplitz matrix equals to
\begin{equation}
 \mathbf{\Sigma}_n = \E_{\btheta_j^{\ast}}\big[\T_{\btheta_j^{\ast}}\bZ_j (\T_{\btheta_j^{\ast}} \bZ_j)'\big]= \left[ \Ec \left[Z\left(\tfrac{\ell}{n}\right) Z\left(\tfrac{\ell'}{n}\right)\right]\right]_{\ell, \ell' =1}^n =  \left[  R\left(\tfrac{|\ell-\ell'|}{n}\right) \right]_{\ell, \ell' =1}^n. \label{eq:Sigman}
\end{equation}
Let $\gamma_{\max}(\mathbf{\Sigma}_n)$ be the largest eigenvalue of the matrix $\mathbf{\Sigma}_n$. It follows from standard results on Toeplitz matrices (see e.g.\ \cite{MR1084815}) that 
\begin{equation} \label{eq:gamma}
\gamma_{\max}\big( \mathbf\Sigma_n \big) \leq \lim_{n \to + \infty} \frac{1}{n} \sum_{k=1}^{n}  \left| R\left(\tfrac{k}{n}\right) \right| =\gamma
\end{equation}
where $\gamma = \int_0^1 \abs{R(t)}dt$ is a positive constant independent of $n$ representing an upper bound of the variance of $Z$.

\subsection{Choice of the smoothed estimators $\hat{f}_{j}$} \label{sec:smoothshifts}

A convenient choice for the smoothing of the observed curves in \eqref{eq:modrandshift} is to do low-pass Fourier filtering. Let
$
{\hat{c}_{j,k}}=  \frac{1}{n} \sum_{\ell=1}^{n} Y_{j}^{\ell} e^{-i2\pi  k \frac{\ell}{n}} \mbox{ for }  k = -(n-1)/2, \ldots,(n-1)/2
$
 (assuming for simplicity that $n$ is odd), and define for a spectral cut-off parameter $\lambda\in \N$ and $t \in [0,1]$ the linear estimators
\begin{equation}\label{eq:fshift}
\hat f_j^\lambda (t) = \sum_{\abs{k}\leq\lambda}  {\hat c_{j,k} }e^{i2\pi  k t} .
\end{equation}
Then, define the Sobolev ball $H_s(A)$ of radius $A>0$ and regularity $s > 0$  as
\begin{equation} \label{eq:Hs}
H_s(A) = \Big\{ f\in L_{per}^{2}([0,1]) , \sum_{k\in\Z} (1+\abs{k}^2)^{s} \abs{c_k(f)}^2 < A \Big\}.
\end{equation}
with  $c_k(f) = \int_0^1 f(t)e^{-i2\pi kt}dt$, $k \in \Z$ for a function $f \in L_{per}^{2}([0,1])$, and take   $\F = H_s(A)$ as the smoothness class to which the mean pattern $f$  is supposed to belong.

\subsection{Consistent estimation of the random shifts}

Using low-pass filtering, and following the discussion in Section \ref{part:identshift} on identifiability issues,  the estimators of the random shifts $\btheta_{1}^{\ast},\ldots, \btheta_{J}^{\ast}$ are given by
\begin{equation} \label{eq:critthetashift1}
\hat{\btheta}^{\lambda} = (\hat{\btheta}_{1}^{\lambda},\ldots,\hat\btheta_{J}^{\lambda}) = \argmin_{ (\btheta_{1},\ldots,\btheta_{J}) \in  \bTheta_0} M_{\lambda}(\btheta_{1},\ldots,\btheta_{J}).
\end{equation}
where the criterion $M_{\lambda}(\btheta) = M_{\lambda}(\btheta_{1},\ldots,\btheta_{J})$ for $\btheta \in \Theta^{J}$ is
\begin{align*}
M_{\lambda}(\btheta) & =   \frac{1}{J} \sum_{j=1}^{J}   \int_{\Omega} \bigg( \hat f_j^\lambda (t+\btheta_{j})   - \frac{1}{J} \sum_{j'=1}^{J} \hat f_{j'}^\lambda (t+\btheta_{j'}) \bigg)^{2} dt 
\end{align*}
and $\bTheta_0$ is the constrained set defined in \eqref{eq:Theta0}.

\begin{theo}\label{theo:shift1}
Consider the model \eqref{eq:modrandshift} and let $\hat{\btheta}^{\lambda}$ be the estimator defined by \eqref{eq:critthetashift1}. Assume that $\F = H_s(A)$ for some $A > 0$ and $s \geq 1$, and that $Z$ is a centered stationary process with value in $L^2_{per}([0,1])$ and covariance function $R:[0,1]\to \R$. Suppose that Assumptions \ref{hyp:f} and  \ref{hyp:g} hold with $\rho < 1/16$. Then, for any $\lambda \geq 1$ and $x > 0$ 
$$
\P\bigg(\frac{1}{J} \lVert \hat\btheta^{\lambda} - \btheta^* \rVert^2_{\R^{J}} \geq C_{1}(\Theta,\F,f)  A_{1}(x,J,n,\lambda,\sigma^2,\gamma) + A_{2}(x,J)\bigg) \leq 4e^{-x},
$$
with
$
A_{1}(x,J,n,\lambda,\sigma^2,\gamma) =    (\sigma^2 + \gamma)\Big( \sqrt{\upsilon(x,J,n,\lambda)} + \upsilon(x,J,n,\lambda)\Big) + \Big(\sqrt{B(\lambda,n)} + B(\lambda,n) \Big)
$
and
$
A_{2}(x,J) =  \left( \sqrt{\frac{2 x}{J}} + \frac{x}{3J} \right)^2,
$
where $C_{1}(\Theta,\F,f)>0$ is constant depending only on $\Theta,\F,f$,
$
\upsilon(x,J,n,\lambda) = \frac{2 \lambda +1}{n} \left(1+ 4\frac{x}{J} + \sqrt{4\frac{x}{J}}\right)
$
,
$
B(\lambda,n) =  \frac{2 \lambda +1}{n} + \lambda^{-2s}.
$
and  $\gamma = \int_0^1 \abs{R(t)}dt$.
\end{theo}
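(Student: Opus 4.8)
The strategy is the standard M-estimation argument (see \cite{VdW}), exploiting the quadratic lower bound on the population criterion $D$ provided by Proposition \ref{prop:D}, but carried out non-asymptotically and with explicit control of all stochastic fluctuations. The key decomposition is $M_\lambda(\btheta) = D(\btheta) + \big(M_\lambda(\btheta) - D(\btheta)\big)$, where $D$ is the idealized, noise-free criterion \eqref{eq:Dshift} built from $f_j(\cdot) = f(\cdot - \btheta_j^*)$. Since $\hat\btheta^\lambda$ minimizes $M_\lambda$ over $\bTheta_0$ and $\btheta^*_{\bTheta_0} \in \bTheta_0$, one has $M_\lambda(\hat\btheta^\lambda) \leq M_\lambda(\btheta^*_{\bTheta_0})$, hence
\begin{align*}
D(\hat\btheta^\lambda) - D(\btheta^*_{\bTheta_0}) \leq \big(M_\lambda(\btheta^*_{\bTheta_0}) - D(\btheta^*_{\bTheta_0})\big) - \big(M_\lambda(\hat\btheta^\lambda) - D(\hat\btheta^\lambda)\big) \leq 2 \sup_{\btheta \in \bTheta_0} \abs{M_\lambda(\btheta) - D(\btheta)}.
\end{align*}
Combining this with Proposition \ref{prop:D}, which under Assumptions \ref{hyp:f}, \ref{hyp:g} and $\rho < 1/16$ gives $D(\hat\btheta^\lambda) - D(\btheta^*_{\bTheta_0}) \geq C(f,\rho)\,\frac{1}{J}\snorm{\hat\btheta^\lambda - \btheta^*_{\bTheta_0}}^2$, reduces the problem to bounding the supremum of $\abs{M_\lambda - D}$ with high probability. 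Recall that on $\bTheta_0$ one has $\btheta^*_{\bTheta_0} = \btheta^* - \bar\btheta^* \1_J$, so $\frac{1}{J}\snorm{\hat\btheta^\lambda - \btheta^*_{\bTheta_0}}^2 = \frac{1}{J}\snorm{\hat\btheta^\lambda - \btheta^*}^2 - (\bar\btheta^*)^2 \leq \frac{1}{J}\snorm{\hat\btheta^\lambda - \btheta^*}^2$; wait — more carefully, since both $\hat\btheta^\lambda$ and $\btheta^*_{\bTheta_0}$ lie in $\1_J^\perp$, the relation between $\snorm{\hat\btheta^\lambda - \btheta^*}^2$ and $\snorm{\hat\btheta^\lambda - \btheta^*_{\bTheta_0}}^2$ is via the Pythagorean identity $\snorm{\hat\btheta^\lambda - \btheta^*}^2 = \snorm{\hat\btheta^\lambda - \btheta^*_{\bTheta_0}}^2 + \snorm{\bar\btheta^*\1_J}^2$, and the extra term $\snorm{\bar\btheta^*\1_J}^2/J = (\bar\btheta^*)^2$ is a deterministic $O(1/J)$ quantity (a consequence of Assumption \ref{hyp:g}, $\int \btheta g = 0$, and a concentration bound for the i.i.d.\ mean $\bar\btheta^*$), which is absorbed into the term $A_2(x,J)$.

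**Controlling $\sup_{\btheta \in \bTheta_0}\abs{M_\lambda(\btheta) - D(\btheta)}$.** Write $\hat f_j^\lambda(t + \btheta_j) = f(t + \btheta_j - \btheta_j^*) + w_j^\lambda(t+\btheta_j)$ where $w_j^\lambda = \hat f_j^\lambda - P_\lambda f(\cdot - \btheta_j^*) + (P_\lambda - \mathrm{Id})f(\cdot-\btheta_j^*)$ collects the stochastic part (low-pass-filtered noise $\sigma\varepsilon_j$ and $Z_j$) and the deterministic bias of truncating at level $\lambda$, with $P_\lambda$ the projection onto frequencies $\abs{k}\leq\lambda$. Expanding both $M_\lambda$ and $D$ as $\frac{1}{J}\sum_j \norm{\cdot}^2 - \norm{\frac{1}{J}\sum_{j'}(\cdot)}^2$ and subtracting, the difference $M_\lambda - D$ is a sum of "cross terms" (inner products of $f$-translates with the $w_j^\lambda$'s) and "pure-noise terms" (quadratic in the $w_j^\lambda$'s); by shift-invariance of the $L^2$-norm on $[0,1]$, each of these can be re-expressed via Parseval in the Fourier domain, uniformly in $\btheta\in\bTheta_0$. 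The stochastic magnitude is governed by: (i) the variance of the low-pass-filtered empirical Fourier coefficients of $\sigma\varepsilon_j + Z_j$, which is of order $(\sigma^2+\gamma)/n$ per frequency and $2\lambda+1$ frequencies, giving the scale $\upsilon(x,J,n,\lambda) = \frac{2\lambda+1}{n}(1 + 4\tfrac{x}{J} + \sqrt{4\tfrac{x}{J}})$ after a Bernstein-type concentration over the $J$ samples (the Gaussianity of $\varepsilon$ and $Z$ giving sub-exponential tails for the relevant $\chi^2$-type sums, hence the $\sqrt{x/J} + x/J$ corrections); and (ii) the deterministic bias term $B(\lambda,n) = \frac{2\lambda+1}{n} + \lambda^{-2s}$, where $\lambda^{-2s}$ is the tail of the Sobolev ball $H_s(A)$ and $\frac{2\lambda+1}{n}$ the aliasing error of the discrete Fourier transform. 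The $\sqrt{\cdot}$ versus $\cdot$ dichotomy in $A_1$ reflects Cauchy--Schwarz applied to the cross terms (giving $\sqrt{\text{variance}\times\text{signal}}$, i.e.\ the square-root scale) versus the pure quadratic noise terms (giving the scale directly). Finally, the term $A_2(x,J) = \big(\sqrt{2x/J} + x/(3J)\big)^2$ is the Bernstein bound for the deterministic-but-random offset $(\bar\btheta^*)^2$ mentioned above and for the replacement of $\frac1J\snorm{\hat\btheta^\lambda-\btheta^*_{\bTheta_0}}^2$ by $\frac1J\snorm{\hat\btheta^\lambda-\btheta^*}^2$.

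**Assembling the bound.** Putting the pieces together: on an event of probability at least $1 - 4e^{-x}$ (the $4e^{-x}$ coming from a union over the at most four concentration inequalities used — for the filtered noise variance, for the $Z_j$-quadratic term, for the cross term, and for $\bar\btheta^*$), one has $\sup_{\btheta\in\bTheta_0}\abs{M_\lambda(\btheta)-D(\btheta)} \leq C\,A_1(x,J,n,\lambda,\sigma^2,\gamma)$, whence
\begin{align*}
\frac{1}{J}\snorm{\hat\btheta^\lambda - \btheta^*}^2_{\R^J} \leq \frac{1}{J}\snorm{\hat\btheta^\lambda - \btheta^*_{\bTheta_0}}^2_{\R^J} + A_2(x,J) \leq \frac{2}{C(f,\rho)}\sup_{\btheta\in\bTheta_0}\abs{M_\lambda-D} + A_2(x,J),
\end{align*}
which is the claimed inequality with $C_1(\Theta,\F,f) = 2/C(f,\rho)$ (up to absorbing universal constants).

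**Main obstacle.** The delicate point is step (i)–(ii) above: obtaining a concentration bound for $\sup_{\btheta\in\bTheta_0}\abs{M_\lambda(\btheta)-D(\btheta)}$ that is \emph{uniform in $\btheta$} with the right dependence on $n$, $J$ and $\lambda$ simultaneously. The uniformity in $\btheta$ is what makes this nontrivial — one cannot simply fix $\btheta$ and concentrate. The saving grace is that, after the Parseval re-expression, the $\btheta$-dependence enters only through unit-modulus phase factors $e^{i2\pi k\btheta_j}$ multiplying fixed (noisy) Fourier coefficients, so $\abs{M_\lambda(\btheta)-D(\btheta)}$ is bounded by a $\btheta$-free quantity — roughly $\frac{1}{J}\sum_j \snorm{w_j^\lambda}^2$ plus a cross term $\frac{1}{J}\sum_j \snorm{w_j^\lambda}\snorm{f}$ and a double average — each of which is handled by a \emph{single} Bernstein inequality for sums of independent sub-exponential random variables (the squared norms of Gaussian vectors in $2\lambda+1$ dimensions). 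Thus the supremum over the continuum $\bTheta_0$ costs nothing beyond crude triangle-inequality bounds, and the entire difficulty is reduced to the careful bookkeeping of variances and biases of the filtered coefficients $\hat c_{j,k}$, for which one uses the decomposition $\hat c_{j,k} = e^{i2\pi k\btheta_j^*}\big(c_k(f) + c_k(Z_j)\big) + \text{aliasing} + \sigma\,\hat\varepsilon_{j,k}$ together with the Toeplitz eigenvalue bound \eqref{eq:gamma} to control $\mathrm{Var}(c_k(Z_j)) \leq \gamma$ uniformly in $k$.
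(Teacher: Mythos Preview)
Your proposal is correct and follows essentially the same route as the paper. The paper proves Theorem~\ref{theo:shift1} by splitting $\frac{1}{J}\lVert\hat\btheta^\lambda-\btheta^*\rVert^2$ into a $\frac{1}{J}\lVert\hat\btheta^\lambda-\btheta^*_{\bTheta_0}\rVert^2$ piece (controlled via the general Theorem~\ref{th:shift}, whose proof is exactly the M-estimation argument you describe: the decomposition $M_\lambda = D + \text{bias terms} + \text{stochastic terms}$, the inequality $D(\hat\btheta^\lambda)-D(\btheta^*_{\bTheta_0})\leq 2\sup|M_\lambda-D|$, Cauchy--Schwarz on the cross terms, and concentration for the quadratic Gaussian forms $Q_\lambda^\varepsilon$, $Q_\lambda^Z$) and a $\frac{1}{J}\lVert\btheta^*_{\bTheta_0}-\btheta^*\rVert^2 = (\bar\btheta^*)^2$ piece (controlled by Bernstein's inequality, Lemma~\ref{lem:Bern}, giving the $A_2(x,J)$ term and the extra $2e^{-x}$).

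Two minor remarks. First, your Pythagorean identity is cleaner than the paper's use of $\lVert a+b\rVert^2\leq 2\lVert a\rVert^2+2\lVert b\rVert^2$, but both lead to the same structure. Second, your accounting of the four exceptional events is slightly off: the paper uses one concentration for $Q_\lambda^\varepsilon$ and one for $Q_\lambda^Z$ (each $e^{-x}$), with the cross terms $R_\lambda^\varepsilon$, $R_\lambda^Z$, $R_\lambda^{Z,\varepsilon}$ handled deterministically via Cauchy--Schwarz from these (no separate event), plus Bernstein for $\bar\btheta^*$ contributing $2e^{-x}$ --- hence $4e^{-x}$ total. Your ``phase-factor'' argument for the uniformity in $\btheta$ is exactly the shifted-curves instance of the paper's bounded-operator Assumption~\ref{ass:op12} (shifts are $L^2$-isometries), so this is the same mechanism.
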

First, remark that for fixed values of  $n$ and $\lambda$,  then $\lim_{J\to +\infty}A_{2}(x,J) = 0$. The term $A_{1}(x,J,n,\lambda,\sigma^2,\gamma)$ depends on the spectral cutoff $\lambda$ via the bias $B(\lambda,n)$ and the variance $\upsilon(x,J,n,\lambda)$ of the estimators $\hat{f}_{j}$. By choosing a sequence $\lambda = \lambda_n$ such that  $\lim_{n \to +\infty}  \lambda_n  = + \infty$ and $\lim_{n \to +\infty} \frac{\lambda_n}{n} =0$ (tradeoff between low variance and low bias)  it follows that for  fixed $ J$ and $x > 0$, then $
\lim_{n \to +\infty} A_{1}(x,J,n,\lambda_n,\sigma^2,\gamma) =0.
$
However, if $n$ remains fixed,  then
$
\lim_{J \to +\infty}A_{1}(x,J,n,\lambda,\sigma^2,\gamma) > 0.
$

Thus, Theorem \ref{theo:shift1} is consistent with the conclusions of Theorem \ref{theo:vantreeshift}, that is,  if  $n$ is fixed, then  it is not possible to estimate $\btheta^*$ by letting only $J$ grows to infinity. Hence, under the assumptions of Theorem \ref{theo:shift1}, one can only prove the convergence in probability  of $\hat\btheta^{\lambda}$ to the true shifts $\btheta^*$ by taking the double asymptotic $n \to + \infty$ and $J \to + \infty$, provided the smoothing parameter $\lambda = \lambda_n$ is well chosen.

\subsection{Consistent estimation of the mean pattern}

In the case of randomly shifted curves, the Fr\'echet mean estimator \eqref{eq:meanpattern} of $f$ is 
$
 \hat f^{\lambda} (t) = \frac{1}{J} \sum_{j=1}^J \hat f^{\lambda}_j (t + \hat\btheta^{\lambda}_{j})$.

\begin{theo}\label{theo:shift2}
Under the assumptions of Theorem \ref{theo:shift1}, for any $\lambda \geq 1$ and $x > 0$ 
$$
\P\bigg(\lVert \hat f^{\lambda} - f \rVert^2_{L^2} \geq C_{2}(\Theta,\F,f)  A_{1}(x,J,n,\lambda,\sigma^2,\gamma) +C_{3}(\Theta,f) A_{2}(x,J)\bigg) \leq 4e^{-x},
$$
where $A_1 (x,J,n,\lambda,\sigma^2,\gamma)$ and $A_{2}(x,J) $ are defined in Theorem \ref{theo:shift1},  $C_{2}(\Theta,\F,f)$ and $C_{3}(\Theta,f)$ are positive constants depending only on $ \Theta,\F,f $, and $\lVert \hat f^{\lambda} - f \rVert^2_{L^2} = \int_{0}^{1} \left| f^{\lambda}(t) - f(t) \right|^2 dt $.
\end{theo}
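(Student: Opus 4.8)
The plan is to reduce the control of $\lVert \hat f^{\lambda} - f \rVert^2_{L^2}$ to quantities that have already been bounded in the proof of Theorem \ref{theo:shift1}, namely the alignment error $\frac{1}{J}\lVert \hat\btheta^{\lambda} - \btheta^* \rVert^2_{\R^J}$, the per-curve smoothing variance $\upsilon(x,J,n,\lambda)$, the bias $B(\lambda,n)$, and the deviation term $A_2(x,J)$. First I would write the estimator as $\hat f^{\lambda}(t) = \frac{1}{J}\sum_j \hat f_j^\lambda(t + \hat\btheta_j^\lambda)$ and insert intermediate terms, decomposing $\hat f^\lambda - f$ into (i) a \emph{misalignment} part $\frac{1}{J}\sum_j\big(\hat f_j^\lambda(\cdot+\hat\btheta_j^\lambda) - \hat f_j^\lambda(\cdot+\btheta_j^*)\big)$, (ii) a \emph{stochastic} part $\frac{1}{J}\sum_j\big(\hat f_j^\lambda(\cdot+\btheta_j^*) - f(\cdot+\btheta_j^*-\bar\btheta^*)\big)$ coming from the noise $\sigma\varepsilon_j^\ell$ and the perturbations $Z_j$ in \eqref{eq:modrandshift} together with the truncation bias of the low-pass filter, and (iii) a purely deterministic part $\frac{1}{J}\sum_j f(\cdot+\btheta_j^* - \bar\btheta^*) - f$, which is the ``averaging over the centered empirical shift distribution'' error. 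Since $\rho<1/16$ and $f$ lies in a Sobolev ball $H_s(A)$ with $s\ge 1$, $f$ is Lipschitz, so part (iii) is $O(\bar\btheta^* - 0)$ in a suitable sense and is controlled by the concentration of $\frac{1}{J}\sum_j\btheta_j^*$ around its (zero) mean, which produces exactly a term of the order $A_2(x,J)$.

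For part (i), the key tool is the Lipschitz-in-shift bound for band-limited functions: for any $\hat f_j^\lambda$ of the form \eqref{eq:fshift}, $\lVert \hat f_j^\lambda(\cdot+a) - \hat f_j^\lambda(\cdot+b)\rVert_{L^2}^2 \le C\lambda^2 |a-b|^2 \lVert \hat f_j^\lambda\rVert_{L^2}^2$ by Bernstein's inequality for trigonometric polynomials; combined with a uniform bound on $\lVert\hat f_j^\lambda\rVert_{L^2}^2$ (which holds with the stated probability, as established when proving Theorem \ref{theo:shift1}) and Cauchy–Schwarz over $j$, this gives that part (i) is bounded by $C\lambda^2\,\frac{1}{J}\lVert\hat\btheta^\lambda - \btheta^*\rVert_{\R^J}^2$ up to the event of probability $4e^{-x}$ — wait, one must be careful that the $\lambda^2$ factor is absorbed: here instead I would use that $M_\lambda(\hat\btheta^\lambda)\le M_\lambda(\btheta^*)$, i.e.\ the estimator is a minimizer, and rework the bound so that the misalignment error is directly controlled by $M_\lambda(\btheta^*) = O((\sigma^2+\gamma)\upsilon + B(\lambda,n))$ plus $\frac{1}{J}\lVert\hat\btheta^\lambda-\btheta^*\rVert^2$, exactly as in the proof of Theorem \ref{theo:shift1}. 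This is the step where I reuse the already-proven bound $\frac{1}{J}\lVert\hat\btheta^\lambda-\btheta^*\rVert_{\R^J}^2 \le C_1 A_1 + A_2$. For part (ii), the $L^2$ distance between $\hat f_j^\lambda(\cdot+\btheta_j^*)$ and $f(\cdot+\btheta_j^*-\bar\btheta^*)$ reduces, after the shift change of variables, to the mean squared error of the low-pass estimator $\hat f_j^\lambda$ of the shifted pattern: its variance contributes $(\sigma^2+\gamma)\frac{2\lambda+1}{n}$ (using \eqref{eq:gamma} for the $Z$ contribution and independence of the $\varepsilon_j^\ell$) and its bias contributes $\lambda^{-2s}$; averaging over $j$ and applying a Bernstein-type concentration inequality for the empirical mean of the $J$ independent $L^2$-valued error terms yields the $(\sigma^2+\gamma)(\sqrt{\upsilon}+\upsilon) + (\sqrt{B}+B)$ structure, i.e.\ a term of order $A_1$.

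Assembling the three parts via the triangle inequality in $L^2$ and $(a+b+c)^2 \le 3(a^2+b^2+c^2)$, and collecting all the bad events (each of probability at most $e^{-x}$, so at most $4e^{-x}$ in total, matching the four exceptional events already used in Theorem \ref{theo:shift1}), gives
$$
\lVert \hat f^\lambda - f\rVert_{L^2}^2 \le C_2(\Theta,\F,f)\,A_1(x,J,n,\lambda,\sigma^2,\gamma) + C_3(\Theta,f)\,A_2(x,J)
$$
with probability at least $1-4e^{-x}$, as claimed. The main obstacle is step (i): one must make sure that the Bernstein factor $\lambda^2$ multiplying the alignment error does not destroy the rate. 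The resolution — and the delicate bookkeeping of the proof — is to \emph{not} bound part (i) crudely by $C\lambda^2 \frac{1}{J}\lVert\hat\btheta^\lambda-\btheta^*\rVert^2$, but to exploit the minimization property of $\hat\btheta^\lambda$ together with Assumption \ref{hyp:ident} (Proposition \ref{prop:D}), exactly as in the derivation of Theorem \ref{theo:shift1}, so that the misalignment contribution inherits the same $A_1 + A_2$ bound without the extra $\lambda^2$; concretely, $\hat f^\lambda$ is the value at $\hat\btheta^\lambda$ of the Fréchet functional whose minimum is already controlled, so the $L^2$ error of the mean pattern estimator is governed by the same empirical-process and bias terms that appear in Theorem \ref{theo:shift1}, up to constants depending on $\Theta,\F,f$.
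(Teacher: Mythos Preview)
Your three-piece decomposition is the right skeleton, and your treatment of part (iii) matches the paper: one writes $\lVert \hat f^{\lambda}-f\rVert^{2}\le 2\lVert \hat f^{\lambda}-f_{\bTheta_{0}}\rVert^{2}+2\lVert f_{\bTheta_{0}}-f\rVert^{2}$ with $f_{\bTheta_{0}}(t)=f(t-\bar\btheta^{*})$, and the Sobolev regularity of $f$ together with Lemma~\ref{lem:Bern} gives $\lVert f_{\bTheta_{0}}-f\rVert^{2}\le C(\Theta,f)|\bar\btheta^{*}|^{2}\le C(\Theta,f)A_{2}(x,J)$ with probability $\ge 1-2e^{-x}$. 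The term $\lVert \hat f^{\lambda}-f_{\bTheta_{0}}\rVert^{2}$ is then handled via the general Theorem~\ref{th:funct}.

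The genuine gap is in your handling of part (i). You correctly diagnose the danger: applying a shift-Lipschitz bound to the full noisy estimators $\hat f_{j}^{\lambda}$ produces a factor $\lambda^{2}$ in front of $\frac{1}{J}\lVert\hat\btheta^{\lambda}-\btheta^{*}\rVert^{2}$, which would ruin the bound. But your proposed cure --- ``exploit the minimization property of $\hat\btheta^{\lambda}$ together with Proposition~\ref{prop:D}'' --- does not address this. Those tools were already used in Theorem~\ref{theo:shift1} to bound $\frac{1}{J}\lVert\hat\btheta^{\lambda}-\btheta^{*}\rVert^{2}$ itself; they do nothing to remove a prefactor $\lambda^{2}$ that appears \emph{after} that bound is applied. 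And the remark that ``$\hat f^{\lambda}$ is the value at $\hat\btheta^{\lambda}$ of the Fr\'echet functional'' is not an argument: $M_{\lambda}(\hat\btheta^{\lambda})$ controls the dispersion of the aligned curves around their mean, not the distance of that mean to $f$.

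The paper's fix is different and simpler. In the proof of Theorem~\ref{th:funct} one does \emph{not} compare $\tilde T_{\hat\btheta_{j}^{\lambda}}\hat f_{j}^{\lambda}$ with $\tilde T_{[\btheta^{*}_{\bTheta_{0}}]_{j}}\hat f_{j}^{\lambda}$. Instead one first separates signal and noise, $\Y_{j}=\T_{\btheta_{j}^{*}}\f+(\T_{\btheta_{j}^{*}}\bZ_{j}+\sigma\beps_{j})$, and writes
\[
\tilde T_{[\btheta^{*}_{\bTheta_{0}}]_{j}}\langle S_{\lambda}(\cdot),\T_{\btheta_{j}^{*}}\f\rangle - \tilde T_{\hat\btheta_{j}^{\lambda}}\langle S_{\lambda}(\cdot),\T_{\btheta_{j}^{*}}\f\rangle \;+\; \tilde T_{\hat\btheta_{j}^{\lambda}}\langle S_{\lambda}(\cdot),\T_{\btheta_{j}^{*}}\f-\Y_{j}\rangle.
\]
The shift-Lipschitz bound (second inequality in Assumption~\ref{ass:op12}) is applied only to the \emph{deterministic} function $t\mapsto\langle S_{\lambda}(t),\T_{\btheta_{j}^{*}}\f\rangle$, which belongs to $\F=H_{s}(A)$ by Assumption~\ref{ass:5}; hence the Lipschitz constant is $C(\F,\Theta)$, independent of $\lambda$, and the first piece is bounded by $C(\F,\Theta)\,\lVert\hat\btheta_{j}^{\lambda}-[\btheta^{*}_{\bTheta_{0}}]_{j}\rVert^{2}$. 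The second piece is handled by the operator bound $\lVert\tilde T_{\btheta}g\rVert^{2}\le C(\Theta)\lVert g\rVert^{2}$ (first inequality in Assumption~\ref{ass:op12}) together with the concentration of $\frac{1}{J}\sum_{j}\lVert\langle S_{\lambda}(\cdot),\T_{\btheta_{j}^{*}}\bZ_{j}+\sigma\beps_{j}\rangle\rVert^{2}$ via Lemma~\ref{lemme:conNoise}. This is what makes the $\lambda^{2}$ disappear, not the minimization property.
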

Similar comments to those made on the consistency of the estimators of the shifts can be made.  A double asymptotic in $n$ and $J$ is needed to show that the Fr\'echet mean $\hat f^{\lambda}$ converges in probability to the true mean pattern $f$. Moreover,  if $\lambda_n$ is too large (e.g.\ such that $\lim_{n \to +\infty} \frac{\lambda_n}{n} \neq 0$, which correspond to undersmoothing), then Theorem \ref{theo:shift2} cannot be used to prove that $ \hat f^{\lambda}$ converges to $f$ in probability. This illustrates the fact that,  to achieve consistency, a sufficient amount of pre-smoothing is necessary before computing the   Fr\'echet mean \eqref{eq:meanpattern}.

\subsection{A lower bound for the Fr\'echet mean}

From the results of Theorem \ref{theo:vantreeshift}, it is expected that the  Fr\'echet mean $\hat f^{\lambda}$ does not converge to  $f$ in the setting $n$ fixed and $J\to+\infty$. To support this argument, consider the following ideal estimator
\begin{equation}
\tilde f  (t) =  \frac{1}{J} \sum_{j=1}^J f_j( t+\hat\btheta^{\lambda}_j) =  \frac{1}{J} \sum_{j=1}^J f( t-\btheta^*_j +\hat\btheta^{\lambda}_j), \quad \text{ for all $t\in [0,1]$},
\end{equation}
where $f_j(t) = f(t-\btheta^*_j),j=1,\ldots,J$. This corresponds to the case of an ideal smoothing step from the data \eqref{eq:modrandshift} that would yield $\hat{f}_{j} = f_j$ for all $j=1,\ldots,J$. Obviously, $\tilde f  (t)$ is not an estimator since it depends on the unobserved quantities $f$ and $\btheta^*_j$, but we can consider it as a benchmark to analyse the converge of the Fr\'echet mean $\hat f^{\lambda}$ to $f$.

\begin{theo}\label{th:vantreefunct}
Suppose that the assumptions of Theorem \ref{theo:vantreeshift} are satisfied with $\rho < \frac{3}{4 \pi}$. Then, for any $n \geq 1$, there exists $J_0 \in \N$ such that $J\geq J_0$ implies
\begin{equation}
\E [\snorm{\tilde f - f }_{L^2}]  \geq C(f,\rho)  \frac{n^{-1}\sigma^2}{ \norm{\partial_{t} f}_{\infty}^2  + n^{-1}\sigma^2 \int_{\Theta}\left(\partial_{\btheta } \log\left(g(\btheta )\right)\right)^2g(\btheta )d\btheta }, \label{eq:lowerboundtildef}
\end{equation}
where the constant $C(f,\rho) > 0$ depends on $f$ and $\rho$.
\end{theo}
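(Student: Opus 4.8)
The plan is to bound $\snorm{\tilde f-f}_{L^2}$ from below, pathwise, by a positive multiple of the empirical quadratic error $\frac1J\snorm{\hat\btheta^\lambda-\btheta^*}_{\R^J}^2$ of the shift estimates, and then to invoke the van~Trees lower bound of Theorem~\ref{theo:vantreeshift}, which applies to $\hat\btheta^\lambda$ since this estimator is a measurable function of the data in \eqref{eq:modrandshift}. I would work in the Fourier basis. Writing $f(t)=\sum_{k\in\Z}c_k(f)e^{i2\pi kt}$ with $c_1(f)\neq0$ (Assumption~\ref{hyp:f}) and setting $\delta_j:=\btheta^*_j-\hat\btheta^\lambda_j$, the ideal estimator $\tilde f(t)=\frac1J\sum_{j=1}^Jf(t-\delta_j)$ has $k$-th Fourier coefficient $c_k(f)\cdot\frac1J\sum_je^{-i2\pi k\delta_j}$, so by Parseval's identity
\[
\snorm{\tilde f-f}_{L^2}^2=\sum_{k\in\Z}\abs{c_k(f)}^2\Bigl|\tfrac1J\sum_{j=1}^J\bigl(e^{-i2\pi k\delta_j}-1\bigr)\Bigr|^2\ \ge\ \abs{c_1(f)}^2\,\Bigl|\tfrac1J\sum_{j=1}^J\bigl(e^{-i2\pi\delta_j}-1\bigr)\Bigr|^2 .
\]

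The crucial point is that $\mathrm{Re}\bigl(e^{-i2\pi\delta_j}-1\bigr)=\cos(2\pi\delta_j)-1=-2\sin^2(\pi\delta_j)\le0$ for every $j$, so there is no cancellation in the average: $\bigl|\frac1J\sum_j(e^{-i2\pi\delta_j}-1)\bigr|\ge\frac2J\sum_j\sin^2(\pi\delta_j)$. Since $\hat\btheta^\lambda\in\bTheta_0\subset\Theta^J$ and the support of $g$ is $\Theta=[-\rho,\rho]$, we have $\abs{\delta_j}\le2\rho$, and the hypothesis $\rho<\tfrac3{4\pi}$ (so that $2\pi\abs{\delta_j}<3<\pi$, in particular $\abs{\delta_j}<\tfrac12$) ensures $c(\rho):=\inf_{0<\abs u\le2\rho}\sin^2(\pi u)/u^2=\sin^2(2\pi\rho)/(4\rho^2)>0$, whence $\sin^2(\pi\delta_j)\ge c(\rho)\,\delta_j^2$. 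Combining, almost surely
\[
\snorm{\tilde f-f}_{L^2}\ \ge\ \abs{c_1(f)}\,\tfrac2J\sum_{j=1}^J\sin^2(\pi\delta_j)\ \ge\ 2c(\rho)\abs{c_1(f)}\,\tfrac1J\snorm{\hat\btheta^\lambda-\btheta^*}^2_{\R^J},
\]
and taking expectations, then applying Theorem~\ref{theo:vantreeshift} to $\hat\btheta^\lambda$, gives
\[
\E\bigl[\snorm{\tilde f-f}_{L^2}\bigr]\ \ge\ 2c(\rho)\abs{c_1(f)}\,\E\Bigl[\tfrac1J\snorm{\hat\btheta^\lambda-\btheta^*}^2_{\R^J}\Bigr]\ \ge\ 2c(\rho)\abs{c_1(f)}\,\frac{n^{-1}\sigma^2}{\norm{\partial_{t}f}_{\infty}^2+n^{-1}\sigma^2\int_\Theta\bigl(\partial_\btheta\log g(\btheta)\bigr)^2g(\btheta)\,d\btheta},
\]
which is \eqref{eq:lowerboundtildef} with $C(f,\rho)=2c(\rho)\abs{c_1(f)}>0$. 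Along this route the bound holds for every $J\ge1$; the threshold $J_0=J_0(n)$ in the statement enters only if one instead bounds $\snorm{\tilde f-f}_{L^2}$ in terms of the error $\frac1J\snorm{\hat\btheta^\lambda-\btheta^*_{\bTheta_0}}^2$ relative to the identifiable parameter, since $\frac1J\snorm{\hat\btheta^\lambda-\btheta^*}^2=\frac1J\snorm{\hat\btheta^\lambda-\btheta^*_{\bTheta_0}}^2+(\bar\btheta^*)^2$ with $\E[(\bar\btheta^*)^2]=\Var(\btheta^*_1)/J$, so that $J$ must be large (depending on $n$) for the van~Trees term to dominate the residual $\Var(\btheta^*_1)/J$.

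The step I expect to be the main obstacle is exactly the one just described: ruling out that $\frac1J\sum_jf(\cdot-\delta_j)$ is close to $f$ while the individual errors $\delta_j$ are not small, i.e.\ controlling cancellations in the average. The non-positivity of $\mathrm{Re}(e^{-i2\pi\delta_j}-1)$ resolves this, because geometric averaging of misaligned curves shrinks each Fourier mode by a fixed-sign amount, converting a second-order (in $\delta_j$) effect into a lower bound that is immune to cancellation. I would emphasise that the argument is purely spectral and needs only $f\in\reg1$ (indeed only $f\in L^2_{per}([0,1])$): a real-space Taylor expansion $f(t-\delta_j)\approx f(t)-\delta_jf'(t)+\tfrac12\delta_j^2f''(t)$ would require $f\in\reg2$, and its linear term $-\bar\btheta^*f'$ vanishes only in expectation, hence does not suffice.
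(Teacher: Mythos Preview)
Your argument is correct, and it is genuinely different from --- and in fact cleaner than --- the paper's own proof. The paper does not compare $\tilde f$ to $f$ directly: it inserts the intermediate point $f_{\bTheta_0}(\cdot)=f(\cdot-\bar\btheta^*)$ and bounds $\E\snorm{\tilde f-f}_{L^2}\ge\bigl|\E\snorm{\tilde f-f_{\bTheta_0}}_{L^2}-\E\snorm{f_{\bTheta_0}-f}_{L^2}\bigr|$. The first expectation is then controlled via a technical Lemma (Lemma~\ref{lemme:SumExp}) that bounds $\bigl|\tfrac1J\sum_j(e^{iu_j}-1)\bigr|$ from below by $\tfrac{C(\delta)}{J}\snorm{u}^2$, but that lemma \emph{requires} the zero-sum constraint $\sum_ju_j=0$; this is why the paper must work with $u_j=2\pi(\hat\btheta^\lambda_j-[\btheta^*_{\bTheta_0}]_j)$ rather than with $\delta_j$ itself. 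It then relates $\tfrac1J\snorm{\hat\btheta^\lambda-\btheta^*_{\bTheta_0}}^2$ back to $\tfrac1J\snorm{\hat\btheta^\lambda-\btheta^*}^2$ at the cost of a cross term involving $\bar\btheta^*$, and both this cross term and $\E\snorm{f_{\bTheta_0}-f}_{L^2}$ are of order $J^{-1/2}$. That is the only source of the threshold $J_0$ in the statement.

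Your observation that $\mathrm{Re}(e^{-i2\pi\delta_j}-1)=-2\sin^2(\pi\delta_j)\le0$ has a fixed sign bypasses all of this: no zero-sum constraint is needed, no detour through $\btheta^*_{\bTheta_0}$, and the pathwise lower bound $\snorm{\tilde f-f}_{L^2}\ge 2c(\rho)\abs{c_1(f)}\,\tfrac1J\snorm{\hat\btheta^\lambda-\btheta^*}^2$ plugs directly into Theorem~\ref{theo:vantreeshift}. Your version therefore delivers the conclusion for every $J\ge1$, which is strictly sharper than what the paper states. Your closing remark correctly identifies why the paper's route produces a $J_0$.
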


Hence, in the setting $n$ fixed and $J\to+\infty $, even the ideal estimator $\tilde f$  does not converge to $f$ for the expected quadratic risk. This illustrates the central role played by the dimension $n$ of the data to obtain consistent estimators.

\section{Notations and main assumptions in the general case} \label{part:ab}

\subsection{Smoothness of the mean pattern and the deformation operators}\label{part:gennote}

 In this part, the notation $(\LL_{\btheta})_{\btheta \in \PP}$ is used to denote either $(T_{\btheta})_{\btheta \in \PP}$ or their inverse $(\tilde T_{\btheta})_{\btheta \in \PP}$.
\begin{hyp} \label{ass:op12} 
For all $\btheta\in\PP$, $ \LL_{\btheta}: L^{2}(\Omega) \longrightarrow L^{2}(\Omega)$ is a linear operator satisfying $ \LL_{\btheta} f \in \F$ for all $f \in \F$. There exists a constant $C(\Theta)>0$ such that for any $f \in L^{2}(\Omega)$ and $\btheta\in\Theta$
$$
\norm{\LL_{\btheta} f }^2_{L^2} \leq  C(\Theta) \norm{f}^2_{L^2}, 
$$
and a constant $C(\F, \Theta)>0$ such that for any $f \in  \F$ and $\btheta_{1},\btheta_{2} \in \Theta$,
$$
\snorm{\tilde{T}_{\btheta_{1}}   f- \tilde{T}_{\btheta_{2}}  f }^2_{L^2}\leq  C(\F,\Theta) \norm{\btheta_1-\btheta_2}^2 .
$$
\end{hyp}

Assumption \ref{ass:op12} can be interpreted as a Lipschitz condition on the mapping $(f,\btheta) \longmapsto \mathcal{L}_{\btheta}f$. 
The first inequality, that is  $\norm{\LL_{\btheta}f}^2_{L^2} \leq C(\Theta) \norm{f}^2_{L^2}$, means that the action of the operator $\LL_{\btheta}$ does not change too much the norm of $f$ when $\btheta$ varies in $\Theta$. Such an assumption on $T_{\btheta}$ and its inverse  $\tilde{T}_{\btheta}$ forces the optimization problem \eqref{eq:crittheta} to have non trivial solutions by avoiding the functional $M(\btheta)$ in \eqref{eq:Mcrittheta} being arbitrarily small. 
 It can be easily checked that  Assumption \ref{ass:op12}  is satisfied in the case \eqref{eq:modelshift} of shifted curves with 
$
\F = H_s(A)
$
and
$
s\geq 1
$
.

\subsection{The preliminary smoothing step} \label{part:smoothing}

For $j=1,\ldots,J$ the $\hat f_{j}$'s are supposed to belong to the class of linear estimators in the sense of the following definition:

\begin{definition} \label{def:lin}
Let $\Lambda$ denote either $\N$ or $\R_{+}$ (set of smoothing parameters). To every $\lambda \in \Lambda$ is associated a non-random vector valued function $S_\lambda :  \Omega \longrightarrow \R^n$ such that for all $j=1,\ldots,J$ and all $t \in \Omega$
$$
 \hat f_{j}(t) = \hat f^\lambda_{j}(t) = \langle S_\lambda(t) , \Y_{j} \rangle, 
$$
where $\langle \cdot, \cdot \rangle$ denotes the standard inner product in $\R^{n}$ and $\Y_{j} = \big(Y_{j}^{\ell}\big)_{\ell=1}^{n} \in  \R^{n}$.
\end{definition}

\begin{hyp} \label{ass:5}
For all $\lambda \in \Lambda$ and all $\ell=1,\ldots,n$, the function $t \longmapsto  S^{\ell}_\lambda(t)$ belong to $L^2(\Omega)$, where $S^{\ell}_\lambda(t)$ denotes the $\ell$-th component of the vector $S_\lambda(t)$. Moreover, for all $\lambda \in \Lambda$, $f \in \F$ and $\btheta \in \Theta$, the function $t \longmapsto \prs{S_\lambda(t),\T_{\btheta} \f}$ belongs to $\F$ where $\T_{\btheta}  \f = \big(T_{\btheta}  f(t_{\ell}) \big)_{\ell=1}^{n}$.
\end{hyp}

In the case  (\ref{eq:modelshift})  of randomly shifted curves with an equi-spaced design, then Assumption \ref{ass:5} holds with $S_\lambda(t) = \Big[\frac{1}{n} \sum_{\abs{k}\leq\lambda} e^{i2\pi  k(t -\frac{\ell}{n})}\Big]_{\ell=1}^{n}.$ Let us now specify how the bias/variance behavior of the linear estimators $\hat f^\lambda_{j}$ depends on the smoothing parameter $\lambda$. For this, consider for some function $f \in \F$ the following regression model
$$
Y^{\ell} = f(t_{\ell})+  \sigma \varepsilon^{\ell}, \; \ell=1,\ldots,n,
$$
where the $\varepsilon_{\ell}$'s are i.i.d normal variables with zero mean and variance 1.
The performances of a linear estimator $\hat f^\lambda(t) = \prs{S_\lambda(t) , \Y }$, where $ \Y = (Y_{\ell})_{\ell=1}^n$, can be evaluated in term of the expected quadratic risk $R_{\lambda}(\hat f^\lambda,f)$ defined by
$$
R_{\lambda}(\hat f^\lambda,f) := \Ec  \big\|\big( \hat f^\lambda - f\big)\big\|_{L^2}^{2}  =  \int_\Omega  \left| B_\lambda(f,t) \right| ^2 dt + \sigma^{2}  \int_\Omega  V_\lambda(t)dt,
$$
where $B_\lambda$ and $V_\lambda$ denote the usual bias and variance of $\hat f^\lambda$ given by
$
B_\lambda(f,t) =   \prs{S_\lambda(t) , \f} -  f(t)
$
and
$
V_\lambda(t) = \norm{ S_\lambda(t)}^{2}_{\R^{n}},
$
for
$ 
t \in \Omega,
$
where $ \f = \big( f(t_{\ell}) \big)_{\ell=1}^{n}$. Define also
$
V(\lambda) = \int_{\Omega} V_\lambda(t)  dt,
$
and let us make the following assumption on the asymptotic behavior of the bias/variance of $\hat f^\lambda$:
\begin{hyp}\label{ass:smoo}
There exist a constant $\kappa(\F)>0$ and a real-valued function $\lambda\longmapsto B(\lambda)$, such that for all $f \in \mathcal F$,
$$
\norm{B_\lambda(f,\cdot) }^2_{L^2} = \norm{\prs{S_\lambda(\cdot) , \mathbf{f}} -  f(\cdot)}^2_{L^2} \leq \kappa(\F)B(\lambda).
$$
Moreover there exists a sequence of smoothing parameters $(\lambda_{n})_{n \in \N} \in \Lambda^{\N}$ with $\lim_{n \to + \infty} \lambda_{n} = + \infty$ such that
$
\lim_{n \to + \infty} B(\lambda_{n}) = 0 \mbox{ and } \lim_{n \to + \infty} V(\lambda_{n}) = 0.
$
\end{hyp}
Let us illustrate Assumption \ref{ass:smoo} in the case of shifted curves with an equi-spaced design, and a smoothing step  obtained by low-pass Fourier filtering. As in Section \ref{part:uppershift}, take $\F = H_s(A)$ defined in (\ref{eq:Hs}). In this setting, $V(\lambda) = \frac{2 \lambda + 1}{n}$. It can be also checked that $\norm{B_\lambda(f,\cdot) }^2_{L^2} \leq C(A) B(\lambda)$ for some positive constant $C(A)$ depending only on $A$, 
and $B(\lambda)  =  \frac{2 \lambda + 1}{n} + \lambda^{-2s}$. Thus, Assumption \ref{ass:smoo} holds with $\lambda_{n} = n^{\frac{1}{2s+1}}$.

\subsection{Random perturbation of the mean pattern $f$ by the $Z_{j}$'s}

\begin{hyp}\label{hyp:EigZmax}
 For any $n \geq 1$, there exists a real $\gamma_{n}(\Theta)>0$ such that for any $\btheta \in \Theta$
$$
\gamma_{\max}\big(\E_{\btheta}\big[\T_{\btheta}\bZ (\T_{\btheta} \bZ)'\big]\big ) \leq \gamma_{n}(\Theta) \quad 
$$
where $\T_{\btheta} \bZ = \big(T_{\btheta} Z(t_\ell) \big)_{\ell=1}^n \in \R^{n}$, and $\gamma_{\max}(A)$ denotes the largest eigenvalue of a symmetric matrix $A$. Moreover,
\begin{equation} \label{eq:gammaV}
\lim_{n \to \infty}\gamma_{n}(\Theta) \sqrt{ V(\lambda_n) } = 0,
\end{equation}
where $V(\lambda_n) $ is the variance defined in Assumption \ref{ass:smoo}.
\end{hyp}
Intuitively, the condition (\ref{eq:gammaV}) means that the variance of the linear smoother $S_{\lambda}(\cdot)$ has to be asymptotically smaller that the maximal correlations (measured by $\gamma_{n}(\Theta)$)  between $T_{\btheta} Z(t_{\ell})$ and $T_{\btheta} Z(t_{\ell'})$ for $\ell,\ell' = 1,\ldots,n$ and all $\btheta \in \Theta$.  In the case of randomly shifted curves with an equi-spaced design, a simple condition for which Assumption \ref{hyp:EigZmax} holds is the case where $Z$ is stationary process (see the arguments in Section \ref{part:Zshift}). 

\section{Consistency in the general case} \label{sec:gen}
 
\subsection{Consistent estimation of the deformation parameters}\label{sec:genshift}

Consider for $\lambda \in \Lambda$ the following estimator of the deformation parameters
$$
\hat \btheta^{\lambda} = \argmin_{\btheta\in \bTheta}\limits M_{\lambda}(\btheta),
$$
where
\begin{align}\label{eq:critM}
M_{\lambda} (\btheta)& = \frac{1}{J}\sum_{j=1}^{J} \int_\Omega \bigg ( \tilde T_{\btheta_{j}}\prs{ S_{\lambda}(t),\Y_{j}} - \frac{1}{J} \sum_{j'=1}^{J} \tilde T_{\btheta_{j'}}\prs{S_{\lambda}(t),\Y_{j'}}\bigg)^2dt,
\end{align}
and $\bTheta $ is the constrained set introduced in Assumption \ref{hyp:unic}. The estimator $\hat \btheta^{\lambda}$ thus depends on the choice of $\bTheta$, and it will be shown that $\hat \btheta^{\lambda}$ is a consistent estimator  of the vector $\btheta^*_{\bTheta} \in \R^{pJ}$ defined in Assumption \ref{hyp:unic}. Note that depending on the problem at hand and the choice of  the constrained set $\bTheta $, it can be shown that $\btheta^*_{\bTheta}$ is close to the true deformation parameters $\btheta^*$. For example, in the case of shifted curves, if $\bTheta = \bTheta_{0}$ defined in (\ref{eq:Theta0}) and if the density $g$ of the shifts has zero mean, then $ \btheta_{\bTheta_0} = (\btheta^*_1 -\bar \btheta^*,\ldots,\btheta_{J}^* - \bar\btheta^*)$ with $  \bar\btheta^* = \frac{1}{J}\sum_{j=1}^J \btheta_j^*$ can be shown to be close to $\btheta^*$ (see Lemma \ref{lem:Bern} in the Appendix). This allows to show the consistency of $\hat \btheta^{\lambda} $ to $\btheta^*$ as formulated in Theorem \ref{theo:shift1}. Therefore, the next result only bounds the distance between $\hat \btheta^{\lambda} $ and  $\btheta^*_{\bTheta}$. 
\begin{theo}\label{th:shift}
Consider the model \eqref{eq:modeldeform} and suppose that Assumptions  \ref{hyp:Theta}, \ref{hyp:unic}, \ref{hyp:ident} and \ref{ass:op12} to \ref{hyp:EigZmax} hold with $n \geq 1$ and $J \geq 2$. Then, for any $\lambda \in \Lambda$ and $x > 0$ 
\begin{align}
\P\bigg(\frac{1}{J} \lVert \hat\btheta^{\lambda} - \btheta_{\bTheta}^* \rVert^2_{\R^{pJ}} \geq & C_{1}(\Theta,\bTheta,\F,f) \Big[ (\gamma_{n}(\Theta)+\sigma^2)\Big( \sqrt{\upsilon(x,J,\lambda)} + \upsilon(x,J,\lambda)\Big) \nonumber \\
&  + \Big(\sqrt{B(\lambda)} + B(\lambda) \Big)\Big] \bigg) \leq 2e^{-x},  \label{eq:dev1}
\end{align}
with $C_{1}(\Theta,\bTheta,\F,f)>0$, $\upsilon(x,J,\lambda) := V(\lambda) \left(1+ 4\frac{x}{J} + \sqrt{4\frac{x}{J}}\right)$. 
\end{theo}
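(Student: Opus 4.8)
The plan is to follow the classical M-estimation scheme: exploit the identifiability inequality of Assumption \ref{hyp:ident} to convert a bound on the optimization gap into a bound on $\frac{1}{J}\snorm{\hat\btheta^\lambda - \btheta^*_{\bTheta}}^2$, and then control the empirical process $M_\lambda(\btheta) - D(\btheta)$ uniformly over $\bTheta$. Write $\Y_j = \T_{\btheta^*_j}\f + \T_{\btheta^*_j}\bZ_j + \sigma\boldsymbol\varepsilon_j$, so that $\prs{S_\lambda(t),\Y_j} = \prs{S_\lambda(t),\T_{\btheta^*_j}\f} + \prs{S_\lambda(t),\T_{\btheta^*_j}\bZ_j} + \sigma\prs{S_\lambda(t),\boldsymbol\varepsilon_j}$; the first term is a deterministic ``bias-contaminated'' version of $T_{\btheta^*_j}f$, while the last two are the stochastic perturbations. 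Introduce the noise-free, bias-free criterion $D(\btheta)$ from \eqref{eq:D} and an intermediate criterion $\tilde D_\lambda(\btheta)$ built from the smoothed signal part $\prs{S_\lambda(\cdot),\T_{\btheta^*_j}\f}$ alone. The decomposition $M_\lambda - D = (M_\lambda - \tilde D_\lambda) + (\tilde D_\lambda - D)$ separates the contribution of the stochastic errors from that of the smoothing bias; the term $\tilde D_\lambda - D$ is controlled deterministically via Assumptions \ref{ass:op12} and \ref{ass:smoo}, producing the $\sqrt{B(\lambda)} + B(\lambda)$ contribution, since $\tilde T_{\btheta_j}$ is Lipschitz and the bias of each $\hat f_j$ is at most $\kappa(\F)B(\lambda)$ in $L^2$.

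The heart of the argument is the uniform control of $M_\lambda(\btheta) - \tilde D_\lambda(\btheta)$ over $\btheta \in \bTheta$. Expanding the square in \eqref{eq:critM}, this difference is a sum of cross terms that are linear and quadratic in the centered noise variables $\xi_j(t) := \prs{S_\lambda(t),\T_{\btheta^*_j}\bZ_j + \sigma\boldsymbol\varepsilon_j}$. Conditionally on $\btheta^* = (\btheta^*_1,\dots,\btheta^*_J)$, each $\xi_j$ is a Gaussian process in $t$, and after applying the inverse operators $\tilde T_{\btheta_j}$ (which map $\F$ into $\F$ and are $L^2$-bounded by Assumption \ref{ass:op12}) one gets Gaussian chaos of order at most $2$. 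The key variance bookkeeping: the second moment of $\prs{S_\lambda(t),\T_{\btheta^*_j}\bZ_j}$ is bounded by $\gamma_{\max}(\E_{\btheta^*_j}[\T_{\btheta^*_j}\bZ(\T_{\btheta^*_j}\bZ)']) \, \norm{S_\lambda(t)}^2_{\R^n} \le \gamma_n(\Theta) V_\lambda(t)$, and similarly the $\sigma\boldsymbol\varepsilon_j$ part contributes $\sigma^2 V_\lambda(t)$; integrating over $\Omega$ gives the scale $(\gamma_n(\Theta)+\sigma^2) V(\lambda)$, which is exactly what appears inside $\upsilon(x,J,\lambda)$. I would then invoke a Bernstein-type concentration inequality for (sums over $j$ of) Gaussian quadratic forms — e.g.\ the Hanson–Wright / Laurent–Massart bound — to obtain, with probability at least $1 - 2e^{-x}$, a bound on $\sup_{\btheta\in\bTheta}|M_\lambda(\btheta) - \tilde D_\lambda(\btheta)|$ of order $(\gamma_n(\Theta)+\sigma^2)\big(\sqrt{\upsilon(x,J,\lambda)} + \upsilon(x,J,\lambda)\big)$; the averaging over the $J$ curves is what produces the $x/J$ dependence rather than $x$.

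Finally I would assemble the pieces. Since $\hat\btheta^\lambda$ minimizes $M_\lambda$ over $\bTheta$ and $\btheta^*_{\bTheta}$ satisfies $D(\btheta^*_{\bTheta}) = 0$, the basic inequality $M_\lambda(\hat\btheta^\lambda) \le M_\lambda(\btheta^*_{\bTheta})$ combined with $D(\hat\btheta^\lambda) - D(\btheta^*_{\bTheta}) \le |M_\lambda(\hat\btheta^\lambda) - D(\hat\btheta^\lambda)| + |M_\lambda(\btheta^*_{\bTheta}) - D(\btheta^*_{\bTheta})| \le 2\sup_{\btheta\in\bTheta}|M_\lambda(\btheta) - D(\btheta)|$ yields a bound on the $D$-gap, and Assumption \ref{hyp:ident} turns this into the claimed bound on $\frac{1}{J}\snorm{\hat\btheta^\lambda - \btheta^*_{\bTheta}}^2$ with constant $C_1(\Theta,\bTheta,\F,f)$ absorbing $1/C(\bTheta,\F)$ and the $L^2$-operator-norm constants. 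The main obstacle I anticipate is the uniformity in $\btheta$ over $\bTheta$: one cannot simply apply a pointwise concentration bound, so either the quadratic forms must be shown to have a tractable dependence on $\btheta$ through the smooth functions $t\mapsto \tilde T_{\btheta_j}\prs{S_\lambda(t),\cdot}$ (using the Lipschitz bound in Assumption \ref{ass:op12} together with an appropriate reduction to the Sobolev structure of $\F$ — this is where $s \ge 1$ / finite-dimensional approximation enters), or a chaining argument over $\bTheta$ is needed; keeping the resulting constant independent of $J$ is the delicate point and relies crucially on the $J$-independence built into Assumptions \ref{hyp:ident} and \ref{hyp:EigZmax}.
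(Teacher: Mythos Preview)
Your plan matches the paper's proof closely: the same M-estimation skeleton (Assumption \ref{hyp:ident} to pass from the $D$-gap to $\frac{1}{J}\|\hat\btheta^\lambda-\btheta^*_{\bTheta}\|^2$, then the basic inequality $D(\hat\btheta^\lambda)-D(\btheta^*_{\bTheta})\le 2\sup_{\btheta\in\bTheta}|M_\lambda(\btheta)-D(\btheta)|$), the same bias/noise decomposition, and the same Laurent--Massart type concentration for the Gaussian quadratic forms.

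The one place where you overshoot is the uniformity in $\btheta$. You flag this as the main obstacle and suggest chaining or exploiting the Lipschitz part of Assumption \ref{ass:op12}, but the paper sidesteps this entirely with a crude upper bound. For the stochastic quadratic terms one writes, for any $\btheta\in\bTheta$,
\[
Q^{\varepsilon}_\lambda(\btheta)\;\le\;\frac{1}{J}\sum_{j=1}^J\big\|\tilde T_{\btheta_j}\langle S_\lambda(\cdot),\sigma\boldsymbol\varepsilon_j\rangle\big\|_{L^2}^2\;\le\;\frac{C(\Theta)}{J}\sum_{j=1}^J\big\|\langle S_\lambda(\cdot),\sigma\boldsymbol\varepsilon_j\rangle\big\|_{L^2}^2,
\]
using only the \emph{operator-norm} half of Assumption \ref{ass:op12} (not the Lipschitz half). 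The right-hand side no longer depends on $\btheta$ at all, so a single application of the concentration lemma for $\frac{1}{J}\sum_j\|\langle S_\lambda,\cdot\rangle\|_{L^2}^2$ handles the supremum; the same trick works for $Q^Z_\lambda$ conditionally on $\btheta^*$, and the cross terms are reduced to $\sqrt{D}\cdot\sqrt{Q}$ via Cauchy--Schwarz. So no chaining, no Sobolev reduction, and the $J$-independence of the constant is automatic. Apart from this simplification, your outline is the paper's proof.
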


Using Assumptions \ref{ass:smoo} and \ref{hyp:EigZmax}, it follows that $\lim_{n\to +\infty} \gamma_{n}(\Theta) \Big( \sqrt{\upsilon(x,J,\lambda_{n})} + \upsilon(x,J,\lambda_{n})\Big) = 0$ for any $x > 0$ and $J \geq 2$.
If $J$ remains fixed, Theorem  \ref{th:shift} thus  implies that $\hat\btheta^{\lambda}$ converges in probability to  $\btheta_{\bTheta}^*$ as  $n \to + \infty$. To the contrary, let us fix $n$, and consider an asymptotic setting where only $J  \to + \infty$. For any $x > 0$ and $\lambda \in \Lambda$,  
$
\lim_{J\to +\infty} \upsilon(x,J,\lambda) = V(\lambda).
$
Therefore, Theorem \ref{th:shift} cannot be used to prove that $\hat\btheta^{\lambda} $ converges to $\btheta_{\bTheta}^*$ as $J  \to + \infty$. This confirms that $\hat\btheta^{\lambda} $ is not a consistent estimator of  $\btheta_{\bTheta}^*$  (and thus of $\btheta^*$) as $n$ remains fixed and $J$ tends to infinity. 

\subsection{Consistent estimation of the mean pattern}

Recall that the estimator $ \hat f^{\lambda} $ of the mean pattern $f$ is defined as
$
 \hat f^{\lambda} = \frac{1}{J} \sum_{j=1}^J \tilde T_{ \hat\btheta^{\lambda}_{j}}\hat f^{\lambda}_j.
$
We study the consistency of $ \hat f^{\lambda} $ with respect to the shape function
$$
f_{\bTheta}^* :=  \frac{1}{J}\sum_{j=1}^J \tilde T_{[\btheta_{\bTheta}^*]_j} T_{\btheta_j^*}f,
$$
defined for $\btheta_{\bTheta}^* = ([\btheta_{\bTheta}^*]_1,\ldots,[\btheta_{\bTheta}^*]_J)$. Again, depending on the problem at hand and the choice of  the constrained set $\bTheta $, it can be shown that $f_{\bTheta}^*$ is close to the true mean pattern $f$. For example, in the case of shifted curves with $\bTheta = \bTheta_{0}$ defined in (\ref{eq:Theta0}), then $ \btheta_{\bTheta_0} = (\btheta^*_1 -\bar \btheta^*,\ldots,\btheta_{J}^* - \bar\btheta^*)$ with $  \bar\btheta^* = \frac{1}{J}\sum_{j=1}^J \btheta_j^*$. In this case
$
f_{\bTheta_0}^*(t) :=  \frac{1}{J}\sum_{j=1}^J f(t - \btheta_j^*+ [\btheta_{\bTheta_0}^*]_j) = f(t-\bar \btheta^*).
$
Hence, under the condition that $\int_{\Theta} \btheta g(\btheta) d \btheta = 0$, then  $\bar\btheta^* \approx 0$ for $J$ sufficiently large, and thus $f_{\bTheta}^*(t)$ is close to $f$ which allows to show the consistency of $ \hat f^{\lambda} $ to $f$ as formulated in Theorem \ref{theo:shift2}.

\begin{theo}\label{th:funct}
Consider the model \eqref{eq:modeldeform}  and  suppose that Assumptions   \ref{hyp:Theta}, \ref{hyp:unic}, \ref{hyp:ident} and \ref{ass:op12} to \ref{hyp:EigZmax} hold. Then, for any $\lambda \in \Lambda$ and $x > 0$ 
\begin{align}
\P\bigg(  \lVert  \hat f^{\lambda} - f_{\bTheta}^* \rVert^2_{L^2} \geq &  C_{2}(\Theta,\bTheta,\F,f) \Big[ (\gamma_{n}(\Theta)+\sigma^2)\Big( \sqrt{\upsilon(x,J,\lambda)} + \upsilon(x,J,\lambda)\Big) \nonumber \\
& + \Big(\sqrt{B(\lambda)} +B(\lambda)   \Big)\Big]   \bigg) \leq 2e^{-x}, \label{eq:dev2}
\end{align}
where $C_{2}(\Theta,\bTheta,\F,f)>0$ is a constant depending only $\Theta$, $\bTheta$, $\F$, and $f$.  
\end{theo}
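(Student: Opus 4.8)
The plan is to derive Theorem \ref{th:funct} as a consequence of Theorem \ref{th:shift}, exploiting the explicit form of both the estimator $\hat f^{\lambda} = \frac{1}{J}\sum_{j=1}^{J}\tilde T_{\hat\btheta^{\lambda}_{j}}\hat f^{\lambda}_{j}$ and the target $f_{\bTheta}^* = \frac{1}{J}\sum_{j=1}^{J}\tilde T_{[\btheta_{\bTheta}^*]_j}T_{\btheta_j^*}f$. First I would split the error by the triangle inequality into two pieces: a \emph{deformation error} $\frac{1}{J}\sum_{j}\bigl(\tilde T_{\hat\btheta^{\lambda}_{j}}-\tilde T_{[\btheta_{\bTheta}^*]_j}\bigr)\hat f^{\lambda}_{j}$, which measures how far the estimated parameters are from $\btheta_{\bTheta}^*$, and a \emph{smoothing error} $\frac{1}{J}\sum_{j}\tilde T_{[\btheta_{\bTheta}^*]_j}\bigl(\hat f^{\lambda}_{j}-T_{\btheta_j^*}f\bigr)$, which measures how far the presmoothed curves are from the noiseless deformed mean pattern. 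I would bound $\lVert\hat f^{\lambda}-f_{\bTheta}^*\rVert_{L^2}^2$ by (twice the square of) each of these.

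For the deformation error, I would use the Lipschitz half of Assumption \ref{ass:op12}, namely $\snorm{\tilde T_{\btheta_1}f-\tilde T_{\btheta_2}f}^2_{L^2}\le C(\F,\Theta)\norm{\btheta_1-\btheta_2}^2$, applied with $f=\hat f^{\lambda}_{j}\in\F$ (which is legitimate since Assumption \ref{ass:5} guarantees the smoothed curves lie in $\F$). After applying Jensen/Cauchy--Schwarz to move the sum inside the norm, this term is controlled by $\frac{C(\F,\Theta)}{J}\sum_{j}\norm{\hat\btheta^{\lambda}_{j}-[\btheta_{\bTheta}^*]_j}^2 = C(\F,\Theta)\,\frac{1}{J}\snorm{\hat\btheta^{\lambda}-\btheta_{\bTheta}^*}^2_{\R^{pJ}}$, which is exactly the quantity bounded in Theorem \ref{th:shift}; hence this piece is dominated (up to a constant depending on $\Theta,\bTheta,\F,f$) by the right-hand side of \eqref{eq:dev1}, on an event of probability at least $1-2e^{-x}$.

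For the smoothing error, I would use the boundedness half of Assumption \ref{ass:op12}, $\norm{\tilde T_{\btheta}h}^2_{L^2}\le C(\Theta)\norm{h}^2_{L^2}$, to reduce it (again after Jensen) to $\frac{C(\Theta)}{J}\sum_{j}\norm{\hat f^{\lambda}_{j}-T_{\btheta_j^*}f}^2_{L^2}$. Writing $\hat f^{\lambda}_{j}(t)=\prs{S_\lambda(t),\Y_j}$ and recalling $\Y_j = \T_{\btheta_j^*}\f + \T_{\btheta_j^*}\bZ_j + \sigma\beps_j$ from model \eqref{eq:modeldeform}, each summand decomposes into a bias term $\prs{S_\lambda(\cdot),\T_{\btheta_j^*}\f}-T_{\btheta_j^*}f(\cdot)$ (controlled by $\kappa(\F)B(\lambda)$ via Assumption \ref{ass:smoo}, since $T_{\btheta_j^*}f\in\F$ by Assumption \ref{ass:op12}) plus a stochastic part $\prs{S_\lambda(\cdot),\T_{\btheta_j^*}\bZ_j+\sigma\beps_j}$. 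The $L^2$-norm of the stochastic part, averaged over $j$, is a quadratic form in independent Gaussian vectors whose conditional expectation is $\bigl(\sigma^2+\text{trace terms}\bigr)V(\lambda)$-like; using Assumption \ref{hyp:EigZmax} to bound the eigenvalues of $\E_{\btheta}[\T_{\btheta}\bZ(\T_{\btheta}\bZ)']$ by $\gamma_n(\Theta)$, and a Gaussian concentration inequality (Bernstein-type, as in the proof of Theorem \ref{th:shift}), one gets a deviation bound of exactly the form $(\gamma_n(\Theta)+\sigma^2)\bigl(\sqrt{\upsilon(x,J,\lambda)}+\upsilon(x,J,\lambda)\bigr)$ on an event of probability at least $1-2e^{-x}$. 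Combining the two events and relabelling constants yields \eqref{eq:dev2}.

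The main obstacle I anticipate is the stochastic part of the smoothing error: one must handle the averaged quadratic form $\frac{1}{J}\sum_{j}\norm{\prs{S_\lambda(\cdot),\T_{\btheta_j^*}\bZ_j+\sigma\beps_j}}^2_{L^2}$ uniformly, controlling both its conditional mean (where the deformation of $Z$ enters only through the eigenvalue bound $\gamma_n(\Theta)$, which is precisely why Assumption \ref{hyp:EigZmax} rather than a full invariance hypothesis suffices) and its fluctuations via a concentration argument that produces the $\sqrt{x/J}$ terms hidden in $\upsilon(x,J,\lambda)$. This is the same technical core already used for Theorem \ref{th:shift}, so in practice much of the work is to show that the map $h\mapsto\frac{1}{J}\sum_j\tilde T_{[\btheta_{\bTheta}^*]_j}h_j$ does not amplify errors --- which is exactly what the two inequalities in Assumption \ref{ass:op12} are designed to guarantee --- and then to invoke the already-established deviation estimates rather than re-derive them.
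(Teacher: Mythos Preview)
Your strategy is essentially the paper's, and the handling of the smoothing error and the invocation of Theorem \ref{th:shift} are both correct. There is, however, one genuine gap in the deformation-error term.

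You apply the Lipschitz half of Assumption \ref{ass:op12} to $\hat f_j^\lambda$ and justify this by saying ``Assumption \ref{ass:5} guarantees the smoothed curves lie in $\F$.'' But Assumption \ref{ass:5} only asserts that $t\mapsto\prs{S_\lambda(t),\T_\btheta\f}$ belongs to $\F$ for $f\in\F$; it says nothing about $\prs{S_\lambda(\cdot),\Y_j}$, whose noise components $\prs{S_\lambda(\cdot),\T_{\btheta_j^*}\bZ_j}$ and $\sigma\prs{S_\lambda(\cdot),\beps_j}$ need not lie in $\F$. Since $\F$ is typically a bounded set (e.g.\ the Sobolev ball $H_s(A)$), the Lipschitz constant $C(\F,\Theta)$ is not available for $\hat f_j^\lambda$, and your bound $\snorm{(\tilde T_{\hat\btheta^\lambda_j}-\tilde T_{[\btheta_{\bTheta}^*]_j})\hat f_j^\lambda}^2_{L^2}\le C(\F,\Theta)\snorm{\hat\btheta^\lambda_j-[\btheta_{\bTheta}^*]_j}^2$ is not justified.

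The paper circumvents this by choosing a different pivot: it inserts $\tilde T_{\hat\btheta^\lambda_j}\prs{S_\lambda(\cdot),\T_{\btheta_j^*}\f}$ rather than $\tilde T_{[\btheta_{\bTheta}^*]_j}\hat f_j^\lambda$. The Lipschitz inequality is then applied to $\prs{S_\lambda(\cdot),\T_{\btheta_j^*}\f}$, which \emph{is} in $\F$ by Assumption \ref{ass:5}, giving the term $\frac{1}{J}\snorm{\hat\btheta^\lambda-\btheta_{\bTheta}^*}^2$. The remaining piece $\tilde T_{\hat\btheta^\lambda_j}\prs{S_\lambda(\cdot),\T_{\btheta_j^*}\f-\Y_j}$ is handled with the boundedness half of Assumption \ref{ass:op12}, producing exactly the stochastic term $\norm{\prs{S_\lambda(\cdot),\T_{\btheta_j^*}\bZ_j+\sigma\beps_j}}^2_{L^2}$ that you already control. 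With this single change of intermediate point, your argument goes through verbatim.
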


The consistency of $\hat f^\lambda$ to $ f_{\bTheta}^*$ is thus guaranteed when $n$ goes to infinity provided the level of smoothing $\lambda=\lambda_n $ is chosen so that $\lim_{n\to +\infty} V(\lambda_n) = \lim_{n\to +\infty} B(\lambda_n) =0$. Again, if $n$ remains fixed and only $J$ is let going to infinity then Theorem \ref{th:funct}  cannot be used to prove the convergence of  $\hat f^\lambda$ to $ f_{\bTheta}^*$.

\section{Numerical experiments for randomly shifted curves} \label{sec:simus}

Consider the model \eqref{eq:modrandshift} with random shifts $\btheta_j$ having a uniform density $g$  with compact support equal to $[-\frac{1}{5},\frac{1}{5}]$, and $f (t)=9\sin(2\pi t) + 2\cos(8\pi t)$ for  $t\in[0,1]$ as a mean pattern, see Figure \ref{fig:data.1}. For the constrained set we took
$$
\bTheta_0 = \left\{ \btheta \in \left[-\tfrac{1}{2},\tfrac{1}{2}\right]^J,\ \btheta_1 + \cdots + \btheta_J = 0 \right\}.
$$
We use Fourier low pass filtering with spectral  cut-off to $\lambda  =7$ which is reasonable value to reconstruct $f$ representing a good tradeoff between bias and variance. We present some results of simulations under various assumptions of the process $Z$ and the level $\sigma$ of additive noise in the measurements. \\
 \begin{figure}[!ht]
\begin{center}
\subfigure[]{\includegraphics[height=4cm]{./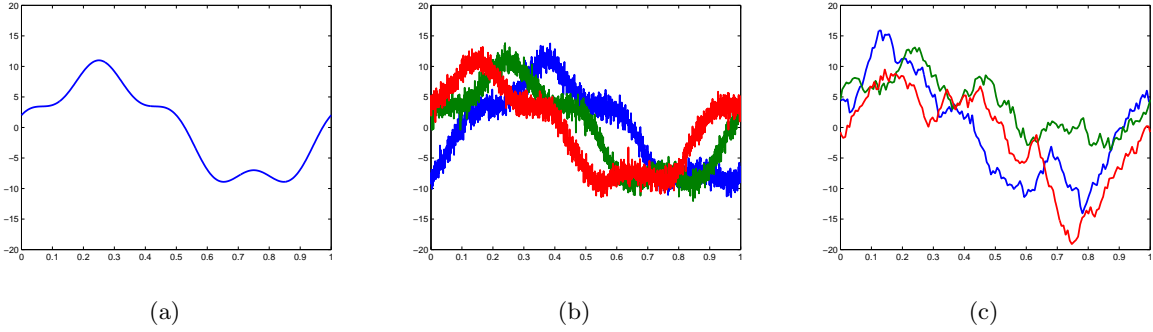}\label{fig:data.1}}
\subfigure[][]{\includegraphics[height=4cm]{./SimuShift/ObsEps}\label{fig:data.2}} 
\subfigure[][]{\includegraphics[height=4cm]{./SimuShift/ObsZ}\label{fig:data.3}}
\caption{\subref{fig:data.1} mean pattern $f$. \subref{fig:data.2} $J=3$ noisy curves in the SIM with $\sigma=2$.  \subref{fig:data.3} $J=3$ noisy curves with $\sigma=0$ and a stationary process $Z$ with $\varsigma = 4$. }
\end{center}
\end{figure}

\noindent {\bf Shape invariant model (SIM).}  The first numerical applications illustrate the role of $n$ and $J$ in the SIM model. Figure \ref{fig:data.2} gives a sample of the data used with $\sigma = 2$.  The factors in the simulations are the number $J$ of curves and the number of design points $n$. For each combination of these two factors, we simulate $M = 20$ repetitions of model \eqref{eq:modrandshift}. For each repetition we computed  $\frac{1}{J}\|\hat\btheta^\lambda - \btheta^*\|^2$ and $\|\hat f^\lambda - f\|^2_{L^2}$. Boxplot of these quantities are displayed in Figure \ref{fig:lb.1} and \ref{fig:lb.2} respectively, for  $J= 20,40,\ldots,100$ and  $n=512$ (in gray) and  $n=1024$  (in black). As the smoothing parameter is fixed to $\lambda = 7$, increasing $n$ simply reduces the variance of the linear smoothers $\hat{f}^{\lambda}_{j}$. Recall that the lower bound given in Theorem \ref{theo:vantreeshift} shows that   $\frac{1}{J}\Ec[\|\btheta^* - \hat\btheta^\lambda\|^2]$ does not decrease as $J$ increases but should be smaller when the number of point $n$ increases. This is exactly what we observe in Figure \ref{fig:lb}. Similarly, the quantity $\|\hat f^\lambda - f\|^2_{L^2}$  is clearly smaller with $n=1024$ than with $n=512$.


\begin{figure}[!ht]
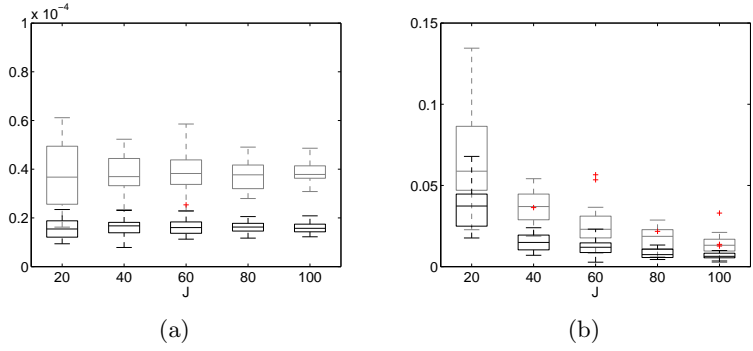

\begin{center}
\subfigure[][]{\includegraphics[height=4cm]{./SimuShift/Vartheta}\label{fig:lb.1}}
 \subfigure[][]{\includegraphics[height=4cm]{./SimuShift/Varfstar}\label{fig:lb.2}}
\caption{Boxplot of  $\frac{1}{J}\|\hat\btheta^\lambda - \btheta^*_{\bTheta_0} \|^2$ \subref{fig:lb.1} and $\|\hat f^\lambda-f_{\bTheta_0}\|^2_{L^2}$ \subref{fig:lb.2} over $M= 20$ repetitions from a SIM model of shifted curves. Boxplot  in gray correspond to $n=512$, and in black to $n=1024$. }\label{fig:lb}
\end{center}
\end{figure}

\noindent{\bf Complete model.}  We now add the terms $Z_j$ in \eqref{eq:modrandshift} to model linear variations in amplitude of the curves around the template $f$. First, we generate a stationary periodic Gaussian process. To do this, the covariance matrix must be a particular Toeplitz matrix. 
As suggested in \cite{Gre} one possibility is to choose 
$$
R(t)= \varsigma^2 \frac{e^{\phi(t -1/2)}+e^{-\phi(t -1/2)}}{e^{\phi/2} + e^{-\phi/2}},
$$ 
where $\phi$ is a strictly positive parameter (we took $\phi = 4$) and $\varsigma$ a variance parameter. The level of additive noise is $\sigma = 8$, and we took $\varsigma=4$.  As an illustration, in Figure \ref{fig:data.3} we plot $f+Z_j$, $j=1,2,3$ with $\varsigma=\phi = 4$. Over $M=20$ repetitions, we have computed the values of $ \frac{1}{J}\|\hat\btheta^\lambda - \btheta^*_{\bTheta_0} \|^2$ and $ \|\hat f^\lambda-f_{\bTheta_0}\|^2_{L^2}$ for $J$ is varying from $20$ to $100$ and $n=512,1024$. The results are displayed in Figure \ref{fig:lbZ.1} and \ref{fig:lbZ.2}. We observe the same behaviors than in the simulations with the SIM model: the variance of $\frac{1}{J}\|\hat\btheta^\lambda - \btheta^*_{\bTheta_0} \|^2 $ does not decrease as $J$ increases (see Figure \ref{fig:lbZ.1}) and $\|\hat f^\lambda-f_{\bTheta_0}\|^2$ has a smaller mean and variance as $n$ increases.
\begin{figure}[!ht]
\begin{center}
\subfigure[]{\includegraphics[height=4cm]{./SimuShift/VarthetaZ}\label{fig:lbZ.1}}
\subfigure[]{\includegraphics[height=4cm]{./SimuShift/VarfstarZ}\label{fig:lbZ.2}}
\caption{Boxplot of   $\frac{1}{J}\|\hat\btheta^\lambda - \btheta^*_{\bTheta_0} \|^2$ \subref{fig:lbZ.1}  and  $\frac{1}{J}\|\hat f^\lambda-f_{\bTheta_0}\|^2$ \subref{fig:lbZ.2} in model \eqref{eq:modrandshift} with a stationnary error term $Z$.  Boxplot  in gray correspond to $n=512$, and in black to $n=1024$.}\label{fig:lbZ}
\end{center}
\end{figure}
 
We finally run the same simulations with a non stationary noise $Z_j(t) = \alpha_j \psi(t)$ where $\psi$ is a positive periodic smooth deterministic function such that $\norm{\psi}_{L^2} =1$ and $\alpha_j \sim \mathcal{N}(0,\varsigma^2)$ with $\varsigma = 4$. Note that, in this case, the sequence $\gamma_n(\Theta)$ is of order $n$ and Assumption \ref{hyp:EigZmax} is not verified. The levels of  noise ($\sigma$ and $\varsigma$) are the same than in the stationary case in order to make things comparable. The results are presented in the same manner in  Figure \ref{fig:lbZmod.1} for $ \frac{1}{J}\|\hat\btheta^\lambda - \btheta^*_{\bTheta_0} \|^2$ and in Figure \ref{fig:lbZmod.2} for $\|\hat f^\lambda-f_{\bTheta_0}\|^2_{L^2}$. One can see that the results are very different. The estimators of the shifts have a much larger mean and variance, and the variance  of$\frac{1}{J}\|\hat\btheta^\lambda - \btheta^*_{\bTheta_0} \|^2$  remains rather high even when $n$ or $J$ increases (see Figure \ref{fig:lbZmod.1}). The convergence to zero of $\|\hat f^\lambda-f_{\bTheta_0}\|^2_{L^2}$ which was clear in the stationary case, is now not so obvious in view of the numerical results displayed in Figure \ref{fig:lbZmod.2}. 
\begin{figure}[!ht]
\begin{center}
\subfigure[]{\includegraphics[height=4cm]{./SimuShift/VarthetaZmod}\label{fig:lbZmod.1}}
 \subfigure[]{\includegraphics[height=4cm]{./SimuShift/VarfstarZmod}\label{fig:lbZmod.2}}
\caption{Boxplot of   $\frac{1}{J}\|\hat\btheta^\lambda - \btheta^*_{\bTheta_0} \|^2$ \subref{fig:lbZmod.1}  and  $\frac{1}{J}\|\hat f^\lambda-f_{\bTheta_0}\|^2$ \subref{fig:lbZmod.2} in model \eqref{eq:modrandshift} with a non-stationnary error term $Z$.  Boxplot  in gray correspond to $n=512$, and in black to $n=1024$.\label{fig:lbZmod}}
\end{center}
\end{figure}

\section{Conclusion and perspectives} \label{sec:conc}

We have proposed to use a Fr\'echet mean of smoothed data to estimate a mean pattern of curves or images satisfying a non-parametric regression model including random deformations. 
 Upper and lower bounds (in probability and expectation)  for the estimation of the deformation parameters and the mean pattern have been derived. Our main result is that these bounds go to zero as the dimension $n$ of the data (the number of sample points) goes to infinity, but that an asymptotic setting only in $J$ (the number of observed curves or images) is not sufficient to obtain consistent estimators. An interesting topic for future investigation would be to study the rate of convergence of such estimators and to analyze their optimality (e.g.\ from a minimax point of view).




\begin{appendix}

\section{Proof of the results in Section \ref{part:lower}}

\subsection{Proof of Theorem \ref{theo:VTSIM}}\label{part:proofVT}

Write $\btheta^*_j = ([\theta^{*}]^{1}_j,\ldots,[\theta^{*}]^{p}_j)$, and let $\Y =(\Y_1,\ldots,\Y_J) \in \R^{nJ}$ be the column vector of the observations generated by model \eqref{eq:SIM}. Conditionally to $\btheta^*$, $\Y$ is a Gaussian vector and  its log-likelihood is equal to
\begin{align}\label{eq:mat0}
\log(p(\Y\vert \btheta^*)) = -\frac{Jn}{2}\log(2\pi) + \frac{J}{2} \log(\det(\Lambda)) -\frac{1}{2} \sum_{j=1}^J(\Y_j-\T_{\btheta^*_{j}}\f) ' \Lambda(\Y_j-\T_{\btheta_j^*}\f),
\end{align}
where $\Lambda = \sigma^{-2} Id_{n}$.
Therefore, we have the expected score $\E_{\btheta^*}[\partial_{[\theta^{*}]^{p_1}_{j_1}} \log(p(\Y\vert \btheta^*))] = 0$ for all $j_1 = 1,\ldots,J$ and $p_1 =1,\ldots,p$ and
\begin{align}\label{eq:mat2}
\E_{\btheta^*}\big[\partial_{[\theta^{*}]^{p_1}_{j_1}} \log(p(\Y\vert \btheta^*))\partial_{[\theta^{*}]^{p_2}_{j_2}} \log(p(\Y\vert \btheta^*)) \big] = \begin{cases}
						 0  & \text{ if } j_1 \neq j_2,\\
						-\big [(\partial_{[\theta^{*}]^{p_1}_{j_1} }  \T_{\btheta_{j_1}^*}\f )'\ \Lambda \ (\partial_{[\theta^{*}]^{p_2}_{j_1}} \T_{\btheta_{j_1}^*}\f)\big]_{p_1,p_2 =1}^p & \text{ if } j_1 = j_2,
						 \end{cases}
\end{align}
where $\partial_{[\theta^{*}]^{p_1}_{j_1}} \T_{\btheta_{j_1}^*}\f  =\big[\partial_{[\theta^{*}]^{p_1}_{j_1}} T_{\btheta_{j_1}^*}f(t_\ell)  \big]_{\ell=1}^n$. Then, for each $j_1 = 1,\ldots,J$ and $p_1 = 1,\ldots,p$ we have
\begin{align}\label{eq:mat3}
(\partial_{[\theta^{*}]^{p_1}_{j_1} }  \T_{\btheta_{j_1}^*}\f )'\ \Lambda \  (\partial_{[\theta^{*}]^{p_1}_{j_1}} \T_{\btheta_{j_1}^*}\f) \leq \sigma^{-2} \snorm{\partial_{[\theta^{*}]^{p_1}_{j_1} }  \T_{\btheta_{j_1}^*}\f}^2 \leq C(\Theta,f)n \sigma^{-2},
\end{align}
where the last inequality is a consequence of Assumption \ref{hyp:op3}. From now on, $\hat\btheta =\hat\btheta(\Y) =(\hat\btheta_1(\Y),\ldots,\hat\btheta_1(\Y))$ is an arbitrary estimator (i.e any measurable function of $\Y$) of the true parameter $\btheta^*$. Let also 
$$
U = \hat \btheta -\btheta^* \quad \text{ and } \quad V = \Big[ [\partial_{[\theta^{*}]^{p_1}_{1}} \log(p(\Y\vert \btheta^*) g(\btheta^*))]_{p_1=1}^p ,\ldots, [\partial_{[\theta^{*}]^{p_1}_{J}} \log(p(\Y\vert \btheta^*) g(\btheta^*))]_{p_1=1}^p\Big] 
$$ 
be a matrix of column vectors of $\R^{pJ}$. Then, Cauchy-Schwarz inequality implies
\begin{equation}\label{eq:CSc}
(\E[U' V])^2 \leq \E[U'U] \E[V' V].
\end{equation}   
In the sequel we note $g^J(\btheta)d\btheta = g(\btheta_1)\ldots g(\btheta_J)d\btheta_1\ldots d\btheta _J$. We have
\begin{align*}
\E[U' V]&  = \sum_{j=1}^J\sum_{p_1=1}^p \int_{\R^{nJ}} \int_{\Theta^J} (\hat\theta_j^{p_1}(y)-[\theta ]_j^{p_1}) \partial_{[\theta^{*}]^{p_1}_{j}} (p(y\vert \btheta )  g^J(\btheta )) d\btheta dy\\
& =\sum_{j=1}^J\sum_{p_1=1}^p \int_{\R^{nJ}} \hat\theta_j^{p_1}(y) \int_{\Theta^J} \partial_{[\theta^{*}]^{p_1}_{j}} (p(y\vert \btheta ) g^J(\btheta )) d\btheta  dy \\
& \qquad -\sum_{j=1}^J\sum_{p_1=1}^p \int_{\R^{nJ}} \int_{\Theta^J} [\theta ]^{p_1}_j \partial_{[\theta^{*}]^{p_1}_{j}} (p(y\vert \btheta ) g^J(\btheta )) d\btheta dy
\end{align*}
Assumption \ref{hyp:Theta} and the differentiability of $g$ imply that for all $p_1 = 1,\ldots,p $ and all $\btheta\in\Theta$ we have $\lim_{\theta^{p_1}\to \rho} g(\btheta) =0$. Then, an integration by part and Fubini's theorem give $\int_{\Theta^J} \partial_{[\theta^{*}]^{p_1}_{j}} (p(y\vert \btheta ) g^J(\btheta )) d\btheta  =0 $. Again, with the same arguments,
$\int_{\Theta^J} [\theta ]^{p_1}_j \partial_{[\theta^{*}]^{p_1}_{j}} (p(y\vert \btheta ) g^J(\btheta )) d\btheta  = -\int_{\Theta^J} p(y\vert \btheta ) g^J(\btheta ) d\btheta   $ and thus
$
\E[U' V] = pJ.
$

Now, using that the expected score is zero and equation \eqref{eq:mat2} we have 
\begin{align*}
\E[V'V] & =\sum_{j=1}^J\sum_{p_1=1}^p  \E[(\partial_{[\theta^{*}]^{p_1}_{j}} \log(p(\Y\vert \btheta^*))^2] +\E[(\partial_{[\theta^{*}]^{p_1}_{j}}  \log(g(\btheta^*))^2]\\
& = \sum_{j=1}^J\sum_{p_1=1}^p \int_{\Theta^J}(\partial_{[\theta]^{p_1}_{j} }  \T_{\btheta_{j}}\f )'\ \Lambda \  (\partial_{[\theta^{*}]^{p_1}_{j}} \T_{\btheta_{j_1}}\f)g^J(\btheta)d\btheta + J\int_{\Theta}\left\| \partial_{\btheta_1} \log\left(g(\btheta_1)\right)\right\|^2 g(\btheta_1)d\btheta_1. 
\end{align*}
where $\partial_{\btheta_1} \log\left(g(\btheta_1)\right) = [\partial_{[\btheta]^{1}_1} \log\left(g(\btheta_1)\right) ,\ldots,\partial_{[\btheta]^{p}_1} \log\left(g(\btheta_1)\right)] \in\R^p$. Then, using inequality \ref{eq:mat3}, it gives
$
\E[V'V] \leq pJ n C(\Theta,f)\sigma^{-2} + J \int_{\Theta}\left\|\partial_{\btheta_1} \log\left(g(\btheta_1)\right)\right\|^2 g(\btheta_1)d\btheta_1.
$
 Hence, using equation \eqref{eq:CSc} for any estimator $\hat\btheta =\hat\btheta(\Y)$ (see Theorem 1 in \cite{MR1354456})
\begin{align*}
\E\left[\|\hat\btheta - \btheta^*\|^2 \right] 
& \geq \frac{pJ}{ n C(\Theta,f)\sigma^{-2} + p^{-1}\int_{\Theta}\left\|\partial_{\btheta_1} \log\left(g(\btheta_1)\right)\right\|^2 g(\btheta_1)d\btheta_1}\\
& \geq \frac{\sigma^{2}n^{-1}pJ}{C(\Theta,f) + n^{-1}p^{-1}\sigma^{2} \int_{\Theta}\left\|\partial_{\btheta_1} \log\left(g(\btheta_1)\right)\right\|^2 g(\btheta_1)d\btheta_1}.
\end{align*}
And since $p\geq 1 $, the claim in  Theorem \ref{theo:VTSIM} is proved. \hfill $\Box$

\subsection{Proof of Theorem \ref{theo:VTGEN}}

 As above, let $\Y\in \R^{nJ}$ is the column vector generated by model \eqref{eq:modeldeform}. Then, conditionally to $\btheta^*$, $\Y$ is a Gaussian vectors and  Assumption \ref{hyp:Zmin} ensures that its log-likelihood has the same expression as in equation \eqref{eq:mat0} but with 
$$
\Lambda =\Lambda(\Theta)  = (\sigma^2 Id_{n} +\Ec_{\btheta^*}\big[ \T_{\btheta^*_j}\bZ_j ( \T_{\btheta^*_j}\bZ_j)'\big])^{-1} = (\sigma^2 Id_{n} +\mathbf \Sigma_n(\Theta) )^{-1}
$$
As the matrix $\mathbf \Sigma_n(\Theta)$ is positive semi definite with it smallest eigenvalue denoted by $s_n^2(\Theta)$ (see Assumption \ref{hyp:Zmin}), the uniform bound \eqref{eq:mat3} becomes 
$$
(\partial_{[\theta^{*}]^{p_1}_{j_1} }  \T_{\btheta_{j_1}^*}\f )'\ \Lambda(\Theta) \  (\partial_{[\theta^{*}]^{p_1}_{j_1}} \T_{\btheta_{j_1}^*}\f) 
 \leq (\sigma^{2} + s_n^2(\Theta))^{-1} \snorm{\partial_{[\theta^{*}]^{p_1}_{j_1} }  \T_{\btheta_{j_1}^*}\f}^2 \leq C(\Theta,f)n (\sigma^{2}+ s^2_n(\Theta))^{-1},
$$ 
for all $p_1=1,\ldots,p$ and $j=1,\ldots,J$. As above the last inequality is a consequence of Assumption \ref{hyp:op3} and the rest of the proof is identical to the proof of Theorem \ref{theo:VTSIM}. \hfill $\Box$

\subsection{Proof of Theorem \ref{theo:vantreeshift}} 

For all $\btheta\in \R$ the operators $T_{\btheta}f(\cdot) = f(\cdot - \btheta)$ are isometric from $L^2([0,1])$ to $L^2([0,1])$ as a change of variable implies immediately that $\norm{T_{\btheta}f}^2_{L^2} =\norm{f}^2_{L^2}$. For all continuously differentiable function $f$, we have $ \partial_{\theta} T_{\btheta}f(t) = -sign(\btheta) \partial_{t} f(t - \btheta),$ where 
$sign(\cdot)$ is the sign function. Then, for all $\btheta\in\Theta$, $ \snorm{\partial_{\theta} T_{\btheta}f}^2_{L^2} = \snorm{\partial_{t} f}^2_{L^2} \leq \snorm{\partial_{t} f}_{\infty}^2$ and Assumption \ref{hyp:op3} is satisfied with $C(\Theta,f) = \snorm{\partial_{t} f}_{\infty}^2$. Finally, as the error terms $Z_j$'s are i.i.d stationary random process the covariance function is invariant by the action of the shifts and Assumption \ref{hyp:Zmin} is satisfied with $\Sigma_n(\Theta) = \mathbf{\Sigma}_{n}$ defined in \eqref{eq:Sigman} (see Section \ref{part:Zshift} for further details). Then, the result of Theorem \ref{theo:vantreeshift} follows as an application of Theorem \ref{theo:VTGEN}.  \hfill $\Box$

\section{Proof of the results in Section \ref{sec:identgen}}

\subsection{Proof of Proposition \ref{prop:D}}

Remark that
$
D(\btheta) = \sum_{k \in \Z} | c_{k}^*|^2 \bigg(1 - \bigg|  \frac{1}{J} \sum_{j=1}^{J}  e^{i2\pi  k (\btheta_{j}-\btheta_{j}^*)}   \bigg|^2 \bigg),
$ 
where $c_{k}^{\ast} = \int_0^1 f(t) e^{-i2\pi k t}dt$. Thanks to Assumption \ref{hyp:f}, it follows that for any $\btheta \in \bTheta $,
\begin{align} \label{eq:loweboundD}
D(\btheta) \geq  | c_{1}^{\ast}|^2 \bigg(1 - \bigg|  \frac{1}{J} \sum_{j=1}^{J}  e^{i2\pi (\btheta_{j}-\btheta_{j}^*)}   \bigg|^2 \bigg)
\end{align}
with $c_{1}^{\ast} \neq 0 $. Then, remark that
$$
\bigg| \frac{1}{J} \sum_{j=1}^{J}  e^{i2\pi   (\btheta_{j}-\btheta_{j}^*)} \bigg|^2
= \frac{1}{J} + \frac{2}{J^2} \sum_{j=1}^{J-1} \sum_{j'=j+1}^{J} \cos\left(2\pi  \left( (\btheta_{j}-\btheta_{j}^*) -  (\btheta_{j'}-\btheta_{j'}^*) \right) \right).
$$
Using a second order Taylor expansion and the mean value theorem, one has that $\cos(2 \pi u) \leq 1 -  C(\rho) |u|^{2}$ for any real $u$ such that $|u|  \leq 4 \rho < 1/4$ with $C(\rho) = 2 \pi^2\cos(8 \pi \rho) > 0$. Therefore, the above equality implies that for  any $\btheta \in \bTheta $
\begin{align*}
\bigg| \frac{1}{J} \sum_{j=1}^{J}  e^{i2\pi   (\btheta_{j}-\btheta_{j}^*)} \bigg|^2
 &\leq \frac{1}{J} + \frac{2}{J^2} \sum_{j=1}^{J-1} \sum_{j'=j+1}^{J} 1 - C(\rho) \left| (\btheta_{j}-\btheta_{j}^*) -(\btheta_{j'}-\btheta_{j'}^*) \right|^2 \\
 &\leq 1 - \frac{2}{J^2} \sum_{j=1}^{J-1} \sum_{j'=j+1}^{J} C(\rho) \left| (\btheta_{j}-\btheta_{j}^*) -(\btheta_{j'}-\btheta_{j'}^*) \right|^2,
\end{align*}
since $ |(\btheta_{j}-\btheta_{j}^*) -(\btheta_{j'}-\btheta_{j'}^*)| \leq 4 \rho < 1/4$ for all $m,q=1,\ldots,n$  by Assumption \ref{hyp:g} and the hypothesis that $\rho < 1/16$. Hence, using the lower bound (\ref{eq:loweboundD}), it follows that for all $\btheta \in \bTheta $
\begin{equation}\label{ed:minD}
D(\btheta) \geq  C(f,\rho) \frac{1}{J^2}  \sum_{j=1}^{J-1} \sum_{j'=j+1}^{J} \left| (\btheta_{j}-\btheta_{j}^*) -(\btheta_{j'}-\btheta_{j'}^*) \right|^2 
\end{equation}
with $C(f,\rho) = 2 | c_{1}^{\ast}|^2 C(\rho)$. Now assume that $\btheta \in \bTheta_0$. Using the properties that $\sum_{j=1}^{J} \btheta_{j} =0$ and $\sum_{j=1}^{J}(\btheta_{j}-\btheta_{j}^*) = -\sum_{j=1}^{J}\btheta_{j}^* = J \bar\btheta^*$, it follows from elementary algebra that
$
\frac{1}{J}\sum_{j=1}^{J-1}\sum_{j'=j+1}^{J}\left| (\btheta_{j}-\btheta_{j}^*) -(\btheta_{j'}-\btheta_{j'}^*) \right|^2 
=  \sum_{j=1}^{J}(\btheta_{j}- (\btheta^*_j - \bar\btheta^*))^2.
$
This equality together with the lower bound \eqref{ed:minD} completes the proof. \hfill$\Box$

\section{Proof of the results in Section \ref{part:uppershift}}

\subsection{Proof of Theorem \ref{theo:shift1}} \label{part:ps}

Let us state the following lemma which is direct consequence of Bernstein's inequality for bounded random variables (see e.g.\ Proposition 2.9 in \cite{MR2319879}):
\begin{lemme} \label{lem:Bern}
Suppose that Assumption \ref{hyp:g} holds. Then, for any $x > 0$
$$
\P \bigg(  \frac{1}{J} \lVert \btheta^*_{\bTheta_{0}} - \btheta^* \rVert^2 \geq \rho^2 \bigg( \sqrt{\frac{2 x}{J}} + \frac{x}{3J} \bigg)^2  \bigg) \leq 2 e^{-x}.
$$
\end{lemme}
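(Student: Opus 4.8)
The plan is to notice that, for the constrained set $\bTheta_0$, the vector $\btheta^*_{\bTheta_0} - \btheta^*$ has a very simple form, and then to reduce the statement to a one-line application of Bernstein's inequality for the empirical mean of the shifts.

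First I would observe that, by the definition \eqref{eq:theta0} of $\btheta^*_{\bTheta_0}$, each of the $J$ coordinates of $\btheta^*_{\bTheta_0} - \btheta^*$ equals $-\frac1J\sum_{j=1}^J \btheta^*_j$, so that
$$
\frac1J \snorm{\btheta^*_{\bTheta_0} - \btheta^*}^2 = \Big( \frac1J \sum_{j=1}^J \btheta^*_j\Big)^2 = \frac1{J^2}\, S^2, \qquad \text{where } S := \sum_{j=1}^J \btheta^*_j .
$$
Consequently the event whose probability is to be bounded is exactly $\{\, |S| \ge \rho J\big(\sqrt{2x/J} + x/(3J)\big) \,\} = \{\, |S| \ge \rho\big(\sqrt{2Jx} + x/3\big)\,\}$, and it remains to control the fluctuations of the sum $S$ of the i.i.d.\ random variables $\btheta^*_1,\ldots,\btheta^*_J$.

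Next I would check the hypotheses of Bernstein's inequality for $S$. Under Assumption \ref{hyp:g} the $\btheta^*_j$ are centered, since $\int_\Theta \btheta\, g(\btheta)\,d\btheta = 0$, and their common support $[-\rho',\rho']$ with $\rho' \le \rho/2$ is contained in $[-\rho,\rho]$; hence $|\btheta^*_j| \le \rho$ almost surely and $\Var(\btheta^*_j) = \E[(\btheta^*_j)^2] \le \rho^2$, so that $\sum_{j=1}^J \Var(\btheta^*_j) \le J\rho^2$. Applying Proposition 2.9 in \cite{MR2319879} (in its two-sided form, obtained by using it for $S$ and for $-S$ together with a union bound) with variance proxy $v = J\rho^2$ and uniform bound $b = \rho$, I obtain, for every $x > 0$,
$$
\P\Big( |S| \ge \sqrt{2 J \rho^2 x} + \tfrac{\rho x}{3} \Big) = \P\Big( |S| \ge \rho\big(\sqrt{2Jx} + \tfrac{x}{3}\big)\Big) \le 2 e^{-x}.
$$
Dividing the bound $S^2 \ge \rho^2\big(\sqrt{2Jx} + x/3\big)^2$ by $J^2$ and using the identity of the first step, the threshold becomes $\rho^2\big(\sqrt{2x/J} + x/(3J)\big)^2$, which is precisely the bound claimed in the lemma.

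There is essentially no obstacle in this argument; the only points requiring mild care are that the bound $|\btheta^*_j| \le \rho$ uses the inclusion coming from $\rho' \le \rho/2 < \rho$ in Assumption \ref{hyp:g}, and that one should invoke the precise normalisation of Bernstein's inequality in \cite{MR2319879} (the factor $2$ for the absolute value, the term $\sqrt{2vx}$, and the linear term $bx/3$).
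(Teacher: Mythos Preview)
Your proof is correct and follows precisely the approach indicated by the paper, which states only that the lemma is a ``direct consequence of Bernstein's inequality for bounded random variables (see e.g.\ Proposition 2.9 in \cite{MR2319879})'' without giving further details. You have correctly supplied those details: the reduction $\frac{1}{J}\lVert \btheta^*_{\bTheta_0}-\btheta^*\rVert^2 = (\bar\btheta^*)^2$ and the application of Bernstein with $v = J\rho^2$ and $b=\rho$ both match the intended argument exactly.
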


Using the inequality $\frac{1}{J} \lVert \hat\btheta_{\lambda} - \btheta^* \rVert^2 \leq \frac{2}{J} \lVert \hat\btheta_{\lambda} - \btheta^*_{\bTheta_{0}} \rVert^2 + \frac{2}{J} \lVert \btheta^*_{\bTheta_{0}} - \btheta^* \rVert^2$, it follows that Theorem \ref{theo:shift1} is a consequence of Lemma \ref{lem:Bern} and Theorem \ref{th:shift}. Indeed, it can be easily checked  that, under the assumptions of  Theorem \ref{theo:shift1},  Assumptions \ref{ass:op12} to \ref{hyp:EigZmax} are satisfied in the case of randomly shifted curves with an equi-spaced design and low-pass Fourier filtering, see the various arguments given in Section \ref{part:ab}). The identifiability condition of Assumption \ref{hyp:ident} is given by Proposition \ref{prop:D}. \hfill$\Box$

\subsection{Proof of Theorem \ref{theo:shift2}}

Consider the following inequality 
$
\lVert \hat f^{\lambda} - f \rVert^2 \leq 2 \lVert \hat f^{\lambda} - f_{\bTheta_{0}} \rVert^2 + 2 \lVert f_{\bTheta_{0}} - f \rVert^2,
$
where $f_{\bTheta_0} (t)= f(t-\bar \btheta^*)$ and $\bar \btheta^* = \frac{1}{J}\sum_{j=1}^J\btheta_j^* \in\Theta$. As $f$ is assumed to be in $H_s(A)$, there  exists a constant $ C(\Theta,f) > 0$ such that $ \lVert f_{\bTheta_{0}} - f \rVert^2_{L^2}  \leq C(\Theta,f)| \bar \btheta^*|^2 = C(\Theta,f) \frac{1}{J}\| \btheta^*_{\bTheta_0} - \btheta^*\|^2$. As explained in part \ref{part:ps} the assumptions of Theorem \ref{theo:shift2} are   satisfied in the case of randomly shifted curves with an equi-spaced design and low-pass Fourier filtering. The result then follows from  Theorem \ref{th:funct}. \hfill $\Box$

\subsection{Proof of Theorem \ref{th:vantreefunct}} \label{part:proofvtfunct}

Let $n \geq 1$. We have that
\begin{align}
\E[\snorm{\tilde f - f }_{L^2}]  = \E \snorm{\tilde f -  f_{\bTheta_0}+  f_{\bTheta_0} -  f }_{L^2} 
 \geq \Big| \; \underbrace{\E \snorm{\tilde f - f_{\bTheta_0}}_{L^2}}_{\mathbf A} -  \underbrace{\E \snorm{ f_{\bTheta_0} - f}_{L^2}  }_{\mathbf B}\; \Big | \label{eq:Min1}
\end{align}
where for all $t\in [0,1]$, 
$
\tilde f  (t) = \frac{1}{J} \sum_{j=1}^J f( t-\btheta^*_j +\hat\btheta^{\lambda}_j),
$ and
$
 f_{\bTheta_0}  (t)  =  f(t + \bar \btheta^*)
$, 
with $\bar \btheta^{*} = \tfrac{1}{J} \sum_{j=1}^J \btheta^{*}_j$. In the rest of the proof, we show that $\mathbf A$ is bounded from below by a quantity $C_{0}(f,g,n,\sigma^2,\rho) = C(f,\rho)  \frac{n^{-1}\sigma^2 }{ \norm{\partial_{t} f}_{\infty}^2  + n^{-1}\sigma^2 \int_{\Theta}\left(\partial_{\btheta } \log\left(g(\btheta )\right)\right)^2 }$ independent of $J$ (this statement is made precise later) and that $\mathbf B$ goes to zero as $J$ goes to infinity. Then, these two facts imply that there exists a $J_0 \in \N$ such that $J\geq J_0$ implies that
$
\E \snorm{\tilde f - \tilde f }_{L^2} \geq\tfrac{1}{2} C_{0}(f,g,n,\sigma^2,\rho),
$
which will yield the desired result.

\paragraph*{Lower bound on $\mathbf A$.} Recall that $c_{k}^{\ast} = \int_0^1 f(t) e^{-i2\pi k t}dt $, then
\begin{align*}
\snorm{\tilde f - f_{\bTheta_0}}_{L^2} & = \snorm{ \frac{1}{J} \sum_{j=1}^J f( \cdot-\btheta^*_j +\hat\btheta_j^{\lambda})  -  f(\cdot + \bar\btheta^*)}_{L^2} =  \bigg(\sum_{k\in\Z} \bigg |\frac{1}{J} \sum_{j=1}^J \Big( e^{i2\pi k (-\btheta^*_j +\hat\btheta_j^{\lambda} )} - e^{i2\pi k \bar\btheta^*} \Big)  c_{k}^{\ast}  \bigg |^2 \bigg)^{\tfrac{1}{2}}, \\
& \geq  |c^*_{1} |\bigg |\frac{1}{J} \sum_{j=1}^J( e^{i2\pi  (\hat\btheta^{\lambda}_{j}- [\btheta^{*}_{\bTheta_0}]_{j})} -1 )\bigg |,
\end{align*}
where $\btheta^{*}_{\bTheta_0} = (\btheta^*_1 - \bar\btheta^* ,\ldots,\btheta^*_J- \bar\btheta^* ) $, the right hand side of the preceding inequality being positive since Assumption \ref{hyp:g} ensures that $c^*_{1}\neq 0 $ for all $j=1,\ldots,J$. Let $u_{j} = 2 \pi( \hat\btheta^{\lambda}_{j}- [\btheta^{*}_{\bTheta_0}]_{j}), j=1,\ldots,J$ . Since $\sum_{j=1}^J u_{j} =0$ and $|u_{j}| \leq 4 \pi \rho < 3, \ j=1,\ldots,J$ (by our assumption on $\rho$), Lemma \ref{lemme:SumExp} implies that 
\begin{equation}\label{eq:Min0}
\snorm{\tilde f -  f_{\bTheta_0}}_{L^2}\geq C(f,\rho) \frac{1}{J}\snorm{\hat\btheta^{\lambda}- \btheta^{*}_{\bTheta_0}}^2 .
\end{equation}
Now, remark that $\E\big[\tfrac{1}{J}\snorm{\hat\btheta^{\lambda}- \btheta^{*}_{\bTheta_0}}^2 \big] \geq  \E\big[\tfrac{1}{J}\snorm{\hat\btheta^{\lambda}- \btheta^{*}}^2 \big]  - \mathbf{C}$ with $\mathbf{C} = 2 \E \big[ \abs{\bar\btheta^*}  \frac{1}{J} \sum_{j=1}^{J}  |\hat\btheta^{\lambda}_{j} -\btheta^*_j | \big]$.
By applying Theorem \ref{theo:vantreeshift} we get that
$$
\E\big[\tfrac{1}{J}\snorm{\hat\btheta^{\lambda}- \btheta^{*}}^2 \big] \geq  C(f,g,n,\sigma^2), \mbox{ with } C(f,g,n,\sigma^2) =  \frac{n^{-1}\sigma^2 }{ \norm{\partial_{t} f}_{\infty}^2  + n^{-1}\sigma^2 \int_{\Theta}\left(\partial_{\btheta } \log\left(g(\btheta )\right)\right)^2 }. 
$$ 
Then, remark that $\mathbf C \leq 4 \rho \sqrt{ \E  \abs{\bar\btheta^*}^2 } \leq C(\rho,g) J^{-1/2}$.
Hence $\mathbf C$ tends to 0 as $J$ goes to infinity. Therefore, using equation \eqref{eq:Min0}, it follows that there exists $C_{0}(f,g,n,\sigma^2,\gamma,\rho) > 0$ and $J_1\in\N$ such that $J\geq J_1$ implies that
\begin{equation}
\mathbf A = \E\big[\snorm{\tilde f^\lambda - \tilde f}_{L^2} \big] \geq C_{0}(f,g,n,\sigma^2,\rho).\label{eq:Min3}
\end{equation}

\paragraph*{Upper bound on $\mathbf B$.} By assumption, $f$ is continuously differentiable on $[0,1]$ implying that
$
\snorm{f_{\bTheta_0} -  f}_{L^2}  = \snorm{ f( \cdot+ \bar \btheta^*  ) - f }_{L^2} \leq \norm{\partial_t f}_{\infty} \sabs{\bar \btheta^{*} }.
$
Therefore, $\E \snorm{f_{\bTheta_0} -  f}_{L^2} \leq \norm{\partial_t f}_{\infty} \sqrt{ \E  \abs{\bar\btheta^*}^2 } \leq   C(f,g) J^{-1/2} $. Hence,  there exists a $J_2\in\N$ such that $J\geq J_2$ implies  
\begin{equation}
\mathbf B = \E[\snorm{\tilde f_{\bTheta_0} - \tilde f}_{L^2}] \leq \frac{1}{2}C_{0}(f,g,n,\sigma^2,\rho). \label{eq:Min2}
\end{equation}
To conclude the proof, equations \eqref{eq:Min1}, \eqref{eq:Min3} and \eqref{eq:Min2} imply that there exists a $J_0 \in \N$ such that  $J\geq J_0$  implies
$
\E  \snorm{\hat f^\lambda - \tilde f }_{L^2}  \geq | \mathbf A - \mathbf B |  \geq  \frac{1}{2}C_{0}(f,g,n,\sigma^2,\rho).
$ \hfill $\Box$

\section{Proof of the results in Section \ref{sec:gen}}

\subsection{Proof of Theorem \ref{th:shift}}\label{part:proofSIM}

We explain here the main arguments of the proof of Theorem \ref{th:shift}. Technical Lemmas are given in the second part of the Appendix. Let $\btheta= (\btheta_1,\ldots, \btheta_J) = (\theta_{1}^{1},\ldots,\theta_{1}^{p},\ldots,\theta_{J}^{1},\ldots,\theta_{J}^{p}) \in\R^{pJ}$ and decompose the criterion \eqref{eq:critM} as follows,
\begin{align*}
M_{\lambda}(\btheta)&= \frac{1}{J}\sum_{j=1}^{J} \int_\Omega \bigg ( \tilde T_{\btheta_{j}}\prs{ S_{\lambda_n}(t),\Y_{j}} - \frac{1}{J} \sum_{j'=1}^{J} \tilde T_{\btheta_{j'}}\prs{S_{\lambda_n}(t),\Y_{j'}}\bigg)^2dt\\
&=D(\btheta) + \Big[ R_{\lambda}(\btheta) + Q_{\lambda}(\btheta) \Big] + \Big[ Q^Z_{\lambda}(\btheta) + R^{Z}_{\lambda}(\btheta) + R^{Z,\varepsilon}_{\lambda}(\btheta)+ Q^{\varepsilon}_{\lambda}(\btheta) +  R^{\varepsilon}_{\lambda}(\btheta) \Big],
\end{align*}
where 
$
D(\btheta) = \frac{1}{J}\sum_{j=1}^{J} \int_\Omega \bigg(\tilde T_{\btheta_{j}} T_{\btheta_{j}^*}f(t) - \frac{1}{J} \sum_{j'=1}^{J} \tilde T_{\btheta_{j'}} T_{\btheta_{j'}^*}f(t) \bigg)^2 dt,
$
the terms $R_\lambda$ and $Q_\lambda$ are due to the smoothing, namely,
\begin{align*}
Q_{\lambda}(\btheta)& = \frac{1}{J} \sum_{j=1}^{J} \int_\Omega \bigg( \tilde T_{\btheta_{j}}B_\lambda(T_{\btheta_{j}^*}f,t) - \frac{1}{J} \sum_{j'=1}^{J}\tilde T_{\btheta_{j'}} B_\lambda(T_{\btheta_{j'}^*}f,t)\bigg)^2dt \\
\begin{split}
R_{\lambda}(\btheta) & = \frac{1}{J}\sum_{j=1}^{J} \int_\Omega \bigg(\tilde T_{\btheta_{j}} T_{\btheta_{j}^*}f(t)- \frac{1}{J} \sum_{j'=1}^{J}\tilde T_{\btheta_{j'}} T_{\btheta_{j'}^*}f(t)\bigg)\\
&\hspace{5cm}\times\bigg( \tilde T_{\btheta_{j}}B_\lambda(T_{\btheta_{j}^*}f,t) - \frac{1}{J} \sum_{j'=1}^{J}\tilde T_{\btheta_{j'}} B_\lambda(T_{\btheta_{j'}^*}f,t)  \bigg )dt,
\end{split}
\end{align*}
and the others terms contain the $Z_j$'s and $\beps_j$'s error terms. Let $\T_{\btheta^*_j}\bZ_j = \big(T_{\btheta^*_j}Z_j(t_\ell)\big)_{\ell=1}^n $ and $ \T_{\btheta^*_j}\f = \big(T_{\btheta^*_j}f(t_\ell)\big)_{\ell=1}^n $, then
\begin{align*}
Q^{Z}_{\lambda}(\btheta) & = \frac{1}{J} \sum_{j=1}^{J} \int_\Omega\bigg( \tilde T_{\btheta_{j}}\prs{S_\lambda(t),\T_{\btheta^*_j}\bZ_j} - \frac{1}{J} \sum_{j'=1}^J \tilde T_{\btheta_{j'}}\prs{S_\lambda(t), \T_{\btheta^*_{j'}}\bZ_{j'}}\bigg)^2dt \\
R^{Z}_{\lambda}(\btheta)& = \frac{2}{J} \sum_{j=1}^{J} \int_\Omega \bigg(\tilde T_{\btheta_{j}}\prs{ S_\lambda(t),\T_{\btheta_j^*} \f} - \frac{1}{J} \sum_{j'=1}^J \tilde T_{\btheta_{j'}}\prs{S_\lambda(t), \T_{\btheta_{j'}^*} \f} \bigg)\\&\hspace{5cm} \times \bigg( \tilde T_{\btheta_{j}}\prs{S_\lambda(t),\T_{\btheta_{j}^*}\bZ_j} - \frac{1}{J} \sum_{j'=1}^J \tilde T_{\btheta_{j'}}\prs{S_\lambda(t),\T_{\btheta_{j'}^*}\bZ_{j'}} \bigg)dt,\allowdisplaybreaks \\
R^{Z,\varepsilon}_{\lambda}(\btheta) & = \frac{2\sigma}{J} \sum_{j=1}^{J} \int_\Omega\bigg( \tilde   T_{\btheta_{j}}\prs{S_\lambda(t),\T_{\btheta_{j}^*}\bZ_j} - \frac{1}{J} \sum_{j'=1}^J \tilde T_{\btheta_{j'}}\prs{S_\lambda(t),\T_{\btheta_{j'}^*}\bZ_{j'} } \bigg) \\ &\hspace{5cm} \times \bigg(  \tilde T_{\btheta_{j}}\prs{S_\lambda(t),\bvarepsilon_j} - \frac{1}{J} \sum_{j'=1}^J \tilde T_{\btheta_{j'}}\prs{S_\lambda(t), \bvarepsilon_{j'}} \bigg) dt\allowdisplaybreaks\\
Q^{\varepsilon}_{\lambda}(\btheta) & = \frac{\sigma^2}{J} \sum_{j=1}^{J} \int_\Omega\bigg( \tilde T_{\btheta_{j}}\prs{S_\lambda(t),\bvarepsilon_j} - \frac{1}{J} \sum_{j'=1}^J \tilde T_{\btheta_{j'}}\prs{S_\lambda(t), \bvarepsilon_{j'}}\bigg)^2dt\\
R^{\varepsilon}_{\lambda}(\btheta) & = \frac{2\sigma}{J} \sum_{j=1}^{J}  \int_\Omega \bigg( \tilde T_{\btheta_{j}}\prs{ S_\lambda(t),\T_{\btheta_j^*} \f} - \frac{1}{J} \sum_{j'=1}^J  \tilde T_{\btheta_{j'}}\prs{S_\lambda(t),\T_{\btheta_{j'}^*} \f } \bigg)\\& \hspace{5cm}\times\bigg(  \tilde T_{\btheta_{j}}\prs{S_\lambda(t),\bvarepsilon_j} - \frac{1}{J} \sum_{j'=1}^J \tilde T_{\btheta_{j'}}\prs{S_\lambda(t), \bvarepsilon_{j'}} \bigg)dt.
\end{align*}
At this stage, recall that $
\btheta_{\bTheta}^*  = \argmin_{\btheta \in \bTheta} D(\btheta) 
$
and
$
\hat\btheta^\lambda  = \argmin_{\btheta \in \bTheta} M_\lambda(\btheta).
$
The proof follows a classical guideline in M-estimation: we show that the uniform (over $\bTheta$) convergence in probability of the criterion $M_\lambda$ to $D$, yielding the convergence in probability of their argmins $\btheta_{\bTheta}^*$  and $\hat\btheta^\lambda$ respectively. 
Assumption \ref{hyp:ident} ensures that there is a constant $C(\Theta,\F,f)>0$ such that,
\begin{equation}\label{eq:1}
\frac{1}{J}\lVert\hat\btheta^\lambda - \btheta_{\bTheta}^*\rVert^2 \leq C(\Theta,\bTheta,\F,f) \abs{D(\hat\btheta^\lambda) - D(\btheta^*_{\bTheta})} 
\end{equation}
Then, a classical inequality in M-estimation and the decomposition of $M_\lambda(\btheta)$  given above yield
\begin{align}\label{eq:2}
\abs{D(\hat\btheta^\lambda) - D(\btheta^*_{\bTheta}) } & \leq 2\sup_{\btheta\in\bTheta}\abs{D(\btheta) - M_\lambda(\btheta)} \\
& \quad  = \underbrace{2\sup_{\btheta\in\bTheta}\Big\{ R_{\lambda}(\btheta) + Q_{\lambda}(\btheta) \Big\}}_{\mathbf B} +\underbrace{2\sup_{\btheta\in\bTheta} \Big\{ Q^Z_{\lambda}(\btheta) + R^{Z}_{\lambda}(\btheta) + R^{Z,\varepsilon}_{\lambda}(\btheta)+ Q^{\varepsilon}_{\lambda}(\btheta) +  R^{\varepsilon}_{\lambda}(\btheta) \Big\}}_{\mathbf V} \nonumber
\end{align}
The rest of the proof is devoted to control the $\mathbf B$ and $\mathbf V$ terms. 
\paragraph*{Control of $\mathbf B$.}  Using Assumption \ref{ass:smoo} and \ref{ass:op12}, we have that
$
Q_\lambda(\btheta)
\leq \frac{C(\Theta)}{J}\sum_{j=1}^J \norm{B_\lambda(T_{\btheta^*_j}f,t ) }^2_{L^2} \leq C(\Theta,\F) B(\lambda).
$
Now by applying the Cauchy-Schwarz inequality, $\abs{R_{\lambda}(\btheta)} \leq \sup_{\btheta\in\bTheta}\limits \{\sqrt{D(\btheta)}\} \sqrt{Q_{\lambda}(\btheta)} $. By Assumption \ref{ass:op12}, there exists a constant such $ \sup_{\btheta\in\bTheta}\limits \{ {D(\btheta)}\} \leq C(\Theta,\F,f)$ and thus
\begin{equation}\label{eq:B}
\mathbf B 
\leq C(\Theta,\F,f) \big(B(\lambda)+ \sqrt{B(\lambda})\big).
\end{equation}

\paragraph*{Control of $\mathbf V$.}  We give a control in probability of the stochastic quadratic term $Q_\lambda^Z$ and $Q_\lambda^\varepsilon$.  As previously, one can show that there is a constant $C(\Theta,\F,f)>0$ such that,
\begin{align*}
\abs{Q^Z_{\lambda}(\btheta) + R^{Z}_{\lambda}(\btheta) + R^{Z,\varepsilon}_{\lambda}(\btheta)+ Q^{\varepsilon}_{\lambda}(\btheta) +  R^{\varepsilon}_{\lambda}(\btheta)} \leq  C(\Theta,\F,f)\left(\sqrt{Q^Z_\lambda(\btheta)}+ Q^Z_\lambda(\btheta) + Q^\varepsilon_\lambda(\btheta) +  \sqrt{Q^\varepsilon_\lambda(\btheta)}  \right),
\end{align*}
where we have used the inequality $2ab \leq a^2 +b^2$, valid for any $a,b>0$ to control the term $ R^{Z,\varepsilon}_{\lambda}$. 
The quadratic terms $Q_\lambda^Z$ and $Q_\lambda^\varepsilon$ are controlled by Corollaries \ref{cor:Qepscon} and \ref{cor:QZcon} respectively. It yields immediately to
\begin{equation}\label{eq:V}
\P\left( \mathbf V \geq   C(\Theta,\F,f) (\gamma_{\max}(n) + \sigma^2) \big(\upsilon(x,J,\lambda) + \sqrt{\upsilon(x,J,\lambda)} \big) \right) \leq 2 e^{-x},
\end{equation}
where $\upsilon(x,J,\lambda) = V(\lambda) \left(1 + 4 \frac{x}{J} +\sqrt{4\frac{x}{J}} \right)$.

Putting together equations \eqref{eq:1}, \eqref{eq:2},  \eqref{eq:B} and \eqref{eq:V}, we have 
\begin{align*}
\P\bigg(\frac{1}{J} \lVert\btheta_{\bTheta}^* -\hat\btheta^{\lambda} \rVert^2 \geq  C(\Theta,\bTheta,\F,f) \Big[ (\gamma_{\max}(n)+\sigma^2)\Big( \sqrt{\upsilon(x,J,\lambda)} + \upsilon(x,J,\lambda)\Big) + \Big(B(\lambda) + \sqrt{B(\lambda)}\Big)\Big]& \bigg) \leq 2e^{-x},
\end{align*}
which completes the proof of Theorem \ref{th:shift}. \hfill$\Box$

\subsection{Proof of Theorem \ref{th:funct}}

In this part, we use the notations introduced in the proof of Theorem \ref{th:shift}. We have,
\begin{align*}
\norm{ f_{\bTheta}^* -\hat f^\lambda}^2_{L^2}& \leq \underbrace{ \frac{2}{J} \sum_{j=1}^J\norm{ \tilde T_{[\btheta_{\bTheta}^*]_j }T_{ \btheta_{j}^{*}}f -\tilde T_{[\btheta_{\bTheta}^*]_j}\prs{S_\lambda(\cdot),  \T_{\btheta_j^*} \f} }^2_{L^2}}_{\mathbf{B'} } \\&  \qquad+  \underbrace{ \frac{2}{J} \sum_{j=1}^J\norm{  \tilde T_{ [\btheta_{\bTheta}^*]_j}\prs{S_\lambda(\cdot), \T_{\btheta_j^*} \f}-\tilde T_{ \hat\btheta^{\lambda}_{j}}\prs{S_\lambda(\cdot),\Y_j}}^2_{L^2}}_{\mathbf{V'} } .
\end{align*}
Again, the first term above depends on the bias, and the second term (stochastic)  can be controlled in probability. 
Under Assumptions \ref{ass:op12} and \ref{ass:smoo} we have that
\begin{align*}
\mathbf{B'}  
& \leq \frac{C(\Theta)}{J} \sum_{j=1}^J \norm{\prs{S_\lambda(\cdot),\T_{ \btheta^*_j}\f }-  T_{\btheta^*_j}f}^2_{L^2}  \leq C(\Theta,\F) B(\lambda),
\end{align*}
and
\begin{align*}
\mathbf{V'} & = \frac{2}{J} \sum_{j=1}^J\norm{\tilde T_{[\btheta_{\bTheta}^*]_j}\prs{S_\lambda(\cdot), \T_{\btheta_j^*} \f} -  \tilde T_{ \hat\btheta^{\lambda}_{j}}\prs{S_\lambda(\cdot),\T_{\btheta_j^*} \f} +\tilde T_{ \hat\btheta^{\lambda}_{j}}\prs{S_\lambda(\cdot),\T_{\btheta_j^*} \f}- \tilde T_{ \hat\btheta^{\lambda}_{j}}\prs{S_\lambda(\cdot),\Y_j} }^2_{L^2} \\
& \leq \frac{C(\Theta,\F)}{J} \sum_{j=1}^J \left(\|\hat\btheta^{\lambda}_{j} - [\btheta_{\bTheta}^*]_j \|^2  +  \norm{\prs{S_\lambda(\cdot),\Y_j-\T_{\btheta_j^*} \f}}^2_{L^2} \right),\allowdisplaybreaks \\
& \leq  C(\Theta,\F)\bigg( \frac{1}{J}\|\hat\btheta^{\lambda} - \btheta_{\bTheta}^*\|^2+ \frac{1}{J}  \sum_{j=1}^J  \norm{\prs{S_\lambda(\cdot), \T_{\btheta^*_j}\bZ_j + \bvarepsilon_{j}   }}^2_{L^2} \bigg)
\end{align*}
The stochastic  term $ \frac{1}{J}  \sum_{j=1}^J  \norm{\prs{S_\lambda(\cdot), \T_{\btheta^*_j}\bZ_j + \bvarepsilon_{j}   }}^2_{L^2} $ in the above inequality can be been controlled using Lemma \ref{lemme:conNoise} and  the arguments in the proof of Corollaries \ref{cor:Qepscon} and \ref{cor:QZcon} to obtain that for any $x > 0$
$$
\P\bigg( \frac{1}{J}  \sum_{j=1}^J  \norm{\prs{S_\lambda(\cdot), \T_{\btheta^*_j}\bZ_j + \bvarepsilon_{j}   }}^2_{L^2}  \geq  C(\Theta,\F,f)  (\gamma_{\max}(n)+\sigma^2)\Big( \sqrt{\upsilon(x,J,\lambda)} + \upsilon(x,J,\lambda)\Big) \bigg) \leq e^{-x}.
$$
Then, from Theorem \ref{th:shift} it follows that 
\begin{align*}
\P\bigg(\mathbf{B'} + \mathbf{V'} \geq  C(\Theta,\bTheta,\F,f) \Big[ (\gamma_{\max}(n)+\sigma^2)\Big( \sqrt{\upsilon(x,J,\lambda)} + \upsilon(x,J,\lambda)\Big) + \Big(B(\lambda) + \sqrt{B(\lambda)}\Big)\Big]& \bigg) \leq 2e^{-x},
\end{align*}
which completes the proof. \hfill $\Box$

\section{Technical Lemmas}

\begin{lemme} \label{lemme:SumExp}
Let $u=(u_1,\ldots,u_J)$ such that $\sum_{j=1}^J  u_j = 0$ with $| u_j |\leq \delta $ for some $0 \leq \delta < 3$ for all $j=1,\ldots,J$. Then, there exists a constant $C(\delta) > 0$ such that
$
\big|\frac{1}{J} \sum_{j=1}^J ( e^{i u_j} -1 )\big| \geq \frac{C(\delta)}{J} \norm{u}^2
$
where $\norm{u}^2 = u_1^2 + \ldots + u_J^2$.
\end{lemme}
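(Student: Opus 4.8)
The plan is to throw away the imaginary part of the complex average and reduce the whole estimate to a quadratic lower bound for $1-\cos$.

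First I would note that, since $|z|\ge|\mathrm{Re}\,z|$ for every $z\in\C$ and since $1-\cos u_j\ge 0$ for each $j$,
$$
\left|\frac1J\sum_{j=1}^J\big(e^{iu_j}-1\big)\right|
\;\ge\;\left|\mathrm{Re}\!\left(\frac1J\sum_{j=1}^J\big(e^{iu_j}-1\big)\right)\right|
\;=\;\frac1J\sum_{j=1}^J\big(1-\cos u_j\big),
$$
the last equality holding because the summands $1-\cos u_j$ are nonnegative and so pull out of the modulus. It is worth remarking that this step uses neither the normalization $\sum_j u_j=0$ nor the bound $|u_j|\le\delta$; the constraint $\sum_j u_j=0$ is only needed when the lemma is actually applied (e.g.\ in the proof of Theorem~\ref{th:vantreefunct}).

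Next I would establish the elementary fact that there is a constant $C(\delta)>0$, depending only on $\delta$, with $1-\cos u\ge C(\delta)\,u^2$ for all $|u|\le\delta$. The cleanest route is a compactness argument: the function $u\mapsto(1-\cos u)/u^2$, extended by the value $\tfrac12$ at $u=0$, is continuous on $\R$ and strictly positive on $[-\delta,\delta]$ because $1-\cos u>0$ for $0<|u|\le\delta<3<2\pi$; hence it attains a positive minimum $C(\delta)$ on the compact interval $[-\delta,\delta]$. (If an explicit value is desired, one can instead use that $x\mapsto\sin(x)/x$ is decreasing on $(0,\pi]$ together with $1-\cos u=2\sin^2(u/2)$ to get the admissible constant $C(\delta)=2\sin^2(\delta/2)/\delta^2$; when $\delta=0$ the lemma is vacuous since then $u=0$.) Plugging this bound into the previous display gives
$$
\left|\frac1J\sum_{j=1}^J\big(e^{iu_j}-1\big)\right|
\;\ge\;\frac{C(\delta)}{J}\sum_{j=1}^J u_j^2
\;=\;\frac{C(\delta)}{J}\,\norm{u}^2,
$$
which is the claimed inequality.

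There is no genuine obstacle here; the only point requiring a moment's care is the hypothesis $\delta<3$, which is precisely what guarantees that $1-\cos u$ does not vanish anywhere on $(0,\delta]$ (it would at $u=2\pi$), so that the constant $C(\delta)$ can be taken strictly positive and, crucially, independent of $J$.
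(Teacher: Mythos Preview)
Your proof is correct and takes a genuinely different route from the paper's. The paper proceeds by a third-order Taylor expansion of $e^{iu_j}$: the constraint $\sum_j u_j=0$ kills the linear term, the quadratic term $-\tfrac{1}{2J}\sum_j u_j^2$ is the main contribution, and the cubic remainder is bounded by $\tfrac{\delta}{3}\cdot\tfrac{1}{J}\sum_j u_j^2$, yielding the explicit constant $C(\delta)=(3-\delta)/6$. You instead pass to the real part and use the elementary inequality $1-\cos u\ge C(\delta)u^2$ on $[-\delta,\delta]$.

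Your approach has two advantages worth noting. First, as you observe, it never uses the hypothesis $\sum_j u_j=0$; the lemma in fact holds without it. Second, your argument works for any $\delta<2\pi$ (not just $\delta<3$), and your explicit constant $2\sin^2(\delta/2)/\delta^2$ stays bounded away from zero as $\delta\uparrow 3$, whereas the paper's constant $(3-\delta)/6$ degenerates. The paper's Taylor-expansion route, on the other hand, makes transparent \emph{why} a zero-sum constraint is natural (it removes the order-$1/J$ imaginary drift), and it gives a constant that depends on $\delta$ in an algebraically simple way. Either argument is entirely adequate for the application in the proof of Theorem~\ref{th:vantreefunct}.
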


\begin{proof}
Let $F(u_1,\ldots,u_J) = \frac{1}{J} \sum_{j=1}^J e^{iu_j}$. A  Taylor expansion implies that there exits  $t_j \in [- \delta, \delta]$, $j=1,\ldots,J$ such that 
$$
F(u_1,\ldots,u_J) = 1 + \frac{i}{J} \sum_{j=1}^J u_j - \frac{1}{2J} \sum_{j=1}^J u_j^2  - \frac{i}{6J} \sum_{j=1}^J u_j^3 e^{it_j},
$$
holds for all $| u_j |\leq \delta$. Now, since $\sum_{j=1}^J u_j = 0$ it follows that
\begin{align*}
\babs{ \frac{1}{J} \sum_{j=1}^J e^{iu_j} - 1} & = \babs{- \frac{1}{2J} \sum_{j=1}^J u_j^2  - \frac{i}{6J} \sum_{j=1}^J u_j^3 e^{it_j} } \geq  \frac{1}{2J}  \babs{ \sum_{j=1}^J u_j^2  - \bigg |   \frac{i}{3} \sum_{j=1}^J u_j^3 e^{it_j}   \bigg |\; }. 
\end{align*}
Since $| u_j | \leq \delta$, we have that
$
\abs{   \frac{i}{3} \sum_{j=1}^J u_j^3 e^{it_j}} \leq \frac{\delta}{3} \sum_{j=1}^J \abs{u_j}^2 
$
which finally implies that
$
\abs{ \frac{1}{J} \sum_{j=1}^J e^{iu_j} - 1} \geq \frac{3 - \delta}{6} \frac{1}{J} \sum_{j=1}^J u_j^2 ,
$
which proves the result by letting $C(\delta)= \frac{3 - \delta}{6} > 0$ since $\delta < 3$.
\end{proof}

\begin{lemme}\label{lemme:conNoise}
Let $\xi_{\lambda,J}(A_1,\ldots,A_J) = \frac{1}{J}\sum_{j=1}^{J} \limits\norm{\prs{S_{\lambda}(\cdot),A_j\bvarepsilon_j}}^2_{L^2}$, where $\bvarepsilon_j \sim \mathcal{N}(0,I_{n})$ and the $A_j$'s are nonrandom non-negative $n\times n$ symmetric matrices. Then, for all $ x> 0$ and all $n\geq 1$,
$$
\P\left( \xi_{\lambda,J}(A_1,\ldots,A_J)\geq \frac{1}{J} \norm{\mathbf A} \Big( 1 + 4 \frac{x}{J} +\sqrt{ 4\frac{x}{J} } \Big) \right) \leq e^{-x}.
$$
where $\norm{\mathbf A} = \sum_{j=1}^J\limits \sum_{\ell=1}^n\limits r_{j,\ell}$ with $ r_{j,\ell}$ being the $\ell$-th eigenvalue of the matrix $\mathbf A_j = A_j \left[\sprs{ S^{\ell}_{\lambda}, S^{\ell'}_{\lambda}}_{L^2}\right]_{\ell,\ell'=1}^n A_j$. 
\end{lemme}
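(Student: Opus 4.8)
The plan is to rewrite $\xi_{\lambda,J}(A_1,\ldots,A_J)$ as a quadratic form in a standard Gaussian vector of $\R^{nJ}$ and then invoke a chi-square type deviation inequality. First, by Fubini's theorem (justified by Assumption \ref{ass:5}, which gives $S^\ell_\lambda\in L^2(\Omega)$), for each $j$
$$
\norm{\prs{S_{\lambda}(\cdot),A_j\bvarepsilon_j}}_{L^2}^2 = \int_\Omega\Big(\sum_{\ell=1}^n S^\ell_\lambda(t)(A_j\bvarepsilon_j)_\ell\Big)^2 dt = \bvarepsilon_j' A_j\big[\sprs{S^{\ell}_{\lambda},S^{\ell'}_{\lambda}}_{L^2}\big]_{\ell,\ell'=1}^n A_j\bvarepsilon_j = \bvarepsilon_j'\mathbf A_j\bvarepsilon_j,
$$
and $\mathbf A_j$ is symmetric positive semi-definite, since the Gram matrix $\big[\sprs{S^{\ell}_{\lambda},S^{\ell'}_{\lambda}}_{L^2}\big]$ is positive semi-definite and $A_j$ is symmetric, so its eigenvalues $r_{j,1},\ldots,r_{j,n}$ are $\geq 0$. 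Consequently $\xi_{\lambda,J}(A_1,\ldots,A_J)=\frac1J\sum_{j=1}^J\bvarepsilon_j'\mathbf A_j\bvarepsilon_j=\bvarepsilon'M\bvarepsilon$, where $\bvarepsilon=(\bvarepsilon_1,\ldots,\bvarepsilon_J)\sim\mathcal N(0,I_{nJ})$ and $M=\frac1J\,\mathrm{diag}(\mathbf A_1,\ldots,\mathbf A_J)$ is symmetric positive semi-definite with eigenvalues $\{r_{j,\ell}/J:\ 1\le j\le J,\ 1\le\ell\le n\}$.

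Next I would apply the standard tail bound for Gaussian quadratic forms. Diagonalizing $M$ gives $\xi_{\lambda,J}=\sum_{j,\ell}(r_{j,\ell}/J)\,g_{j,\ell}^2$ with $(g_{j,\ell})$ i.i.d.\ $\mathcal N(0,1)$, so $\Ec[\xi_{\lambda,J}]=\frac1J\sum_{j,\ell}r_{j,\ell}=\frac1J\norm{\mathbf A}$; bounding the Laplace transform of the centred variable by $\exp\!\big(s^2\nu/(1-2sb)\big)$ with $\nu=\sum_{j,\ell}(r_{j,\ell}/J)^2$ and $b=\max_{j,\ell}r_{j,\ell}/J$, exactly as in the Laurent--Massart estimate (see e.g.\ \cite{MR2319879}), a Chernoff argument yields, for every $x>0$,
$$
\P\Big(\xi_{\lambda,J}(A_1,\ldots,A_J)\ \ge\ \tfrac1J\norm{\mathbf A}+2\sqrt{\nu x}+2bx\Big)\ \le\ e^{-x}.
$$

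It then remains to control $\nu$ and $b$. Since all $r_{j,\ell}\ge0$, one has $\sum_{j,\ell}r_{j,\ell}^2\le\big(\max_{j,\ell}r_{j,\ell}\big)\sum_{j,\ell}r_{j,\ell}=\big(\max_{j,\ell}r_{j,\ell}\big)\norm{\mathbf A}$, and, $\mathbf A_j$ being positive semi-definite, $\max_\ell r_{j,\ell}=\gamma_{\max}(\mathbf A_j)\le\tr(\mathbf A_j)=\sum_\ell r_{j,\ell}$. In every application of this lemma the matrices $A_j$, hence the $\mathbf A_j$, are identical (in Corollaries \ref{cor:Qepscon} and \ref{cor:QZcon} one has $A_j\equiv I_n$, respectively $A_j$ equal to a fixed square root of the common covariance), so $\tr(\mathbf A_j)=\frac1J\norm{\mathbf A}$ for all $j$ and therefore $\max_{j,\ell}r_{j,\ell}\le\frac1J\norm{\mathbf A}$. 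Plugging this in gives $b\le\frac1{J^2}\norm{\mathbf A}$ and $\nu\le\frac1{J^2}\cdot\frac1J\norm{\mathbf A}^2=\frac1{J^3}\norm{\mathbf A}^2$, so that $2\sqrt{\nu x}\le\frac1J\norm{\mathbf A}\sqrt{4x/J}$ and $2bx\le\frac1J\norm{\mathbf A}\cdot\frac{2x}{J}\le\frac1J\norm{\mathbf A}\cdot\frac{4x}{J}$; adding these to $\Ec[\xi_{\lambda,J}]=\frac1J\norm{\mathbf A}$ produces the announced bound.

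The delicate point is not the deviation inequality itself but obtaining the rate $\sqrt{x/J}$ (and the scale $x/J$) rather than the crude $\sqrt x$ (and $x$) one would read off from Laurent--Massart applied blindly. This improvement rests on the independence of the $J$ blocks, the fluctuation of $\xi_{\lambda,J}$ being an average of $J$ independent contributions each of variance $O(1/J^2)$, combined with the elementary inequality $\gamma_{\max}(\mathbf A_j)\le\tr(\mathbf A_j)$ that lets one trade each block's operator norm for its trace; this is also where the common-trace structure of the $\mathbf A_j$'s, automatic in the applications, enters.
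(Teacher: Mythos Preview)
Your approach is the paper's: rewrite $\xi_{\lambda,J}$ as $\frac1J\sum_j\bvarepsilon_j'\mathbf A_j\bvarepsilon_j$, apply the Laurent--Massart bound, and then simplify. The paper carries out exactly these steps and arrives at
\[
\P\Big(\xi_{\lambda,J}\ge \tfrac1J\|\mathbf A\|+\tfrac{4r_1}{J}x+\tfrac{\|r\|}{J}\sqrt{4x}\Big)\le e^{-x},
\]
where $r_1=\max_{j,\ell}r_{j,\ell}$ and $\|r\|^2=\sum_{j,\ell}r_{j,\ell}^2$, and then closes with the single remark $\|r\|^2\le\|\mathbf A\|^2$.

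You are right to be uneasy about the last step. That remark only gives $r_1\le\|\mathbf A\|$ and $\|r\|\le\|\mathbf A\|$, hence the threshold $\frac1J\|\mathbf A\|(1+4x+\sqrt{4x})$, \emph{not} the announced $\frac1J\|\mathbf A\|(1+4\tfrac xJ+\sqrt{4\tfrac xJ})$. In fact the lemma is false as stated: take $n=1$, $A_1>0$ and $A_j=0$ for $j\ge 2$; then $\xi_{\lambda,J}=\frac{\|\mathbf A\|}{J}\varepsilon_1^2$ and the claim reads $\P(\varepsilon_1^2\ge 1+4x/J+2\sqrt{x/J})\le e^{-x}$, which fails for $J$ large and, say, $x=2$. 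So some balancing hypothesis across $j$ is necessary, and your observation that $\gamma_{\max}(\mathbf A_j)\le\tr(\mathbf A_j)$ combined with $\tr(\mathbf A_j)=\frac1J\|\mathbf A\|$ is exactly the missing ingredient.

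One small correction: in Corollary~\ref{cor:QZcon} the matrices $A_j=\E_{\btheta^*}[\T_{\btheta_j^*}\bZ_j(\T_{\btheta_j^*}\bZ_j)']^{1/2}$ depend on $\btheta_j^*$ and are \emph{not} identical under Assumption~\ref{hyp:EigZmax} alone (that would need Assumption~\ref{hyp:Zmin}). What that corollary actually uses is the uniform bound $\tr(\mathbf A_j)\le\gamma_n(\Theta)V(\lambda)$ for every $j$; replacing your equal-trace hypothesis by this common upper bound $T$ on $\tr(\mathbf A_j)$ gives $r_1\le T$, $\|r\|^2\le T\sum_{j,\ell}r_{j,\ell}\le JT^2$, and hence a threshold $T(1+4x/J+\sqrt{4x/J})$, which is what both corollaries need. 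Your argument goes through verbatim with this modification.
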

\begin{proof}
Some parts of the proof follows the arguments in \cite{MR1653272} (Lemma 8, part 7.6). We have
\begin{align*}
\xi_{\lambda,J} = \frac{1}{J}\sum_{j=1}^{J}\bigg\|\sum_{\ell=1}^n  S^\ell_{\lambda}(\cdot)  [A \bvarepsilon_j]^{\ell} \bigg\|_{L_2}^2 = \frac{1}{J}\sum_{j=1}^{J} \sum_{\ell,\ell'=1}^n \sprs{S^{\ell}_{\lambda}, S^{\ell'}_{\lambda}}_{L^2} [A_j \bvarepsilon_j]^{\ell}[A_j \bvarepsilon_j]^{\ell'} = \frac{1}{J}\sum_{j=1}^{J}\bvarepsilon_j' \mathbf A_j \bvarepsilon_j,
\end{align*}
where $\mathbf A_j = A_j \mathbf{S}_\lambda A_j$ with $\mathbf S_{\lambda} = \left[\sprs{ S^{\ell}_{\lambda}, S^{\ell'}_{\lambda}}_{L^2}\right]_{\ell,\ell'=1}^n$. Now, denote by $r_{j,1} \geq\ldots\geq r_{j,n}$ the eigenvalues of $\mathbf A_j$ with $ r_{j,1}\geq \ldots\geq r_{j,n}\geq 0$ and $r_1 = \max_{j, \ell}\{r_{j,\ell}\}$. We can write $\mathbf A_j = ({\mathbf S_{\lambda}}^{\frac{1}{2}} A_j)' ({\mathbf S_{\lambda}}^{\frac{1}{2}}A_j)$ and is positive semi-definite. Then, let 
$
\tilde \xi_{\lambda,J} = J\xi_{\lambda,J} - J\Ec \xi_{\lambda,J} = \sum_{j=1}^J\limits (\beps_j' \mathbf A_j \beps_j - \tr \mathbf{A}_j).
$
Let $\alpha>0$, by Markov's inequality it follows that for all $u \in \left (0,\frac{1}{2r_1} \right)$, 
$
\P\left(\tilde \xi_{\lambda,J}\geq \alpha\right) = \P\left(e^{u\tilde \xi_{\lambda,J}} \geq e^{u\alpha }\right)  \leq e^{-u\alpha }\prod_{j=1}^{J}\Ec\left[e^{u {\beps_j}' \mathbf{A}_j \beps_j -u\tr \mathbf{A}_j }\right],
$
since the $\beps_j$'s are independent. The log-Laplace transform of $\tilde\varphi_{\lambda,j} = {\beps_j}' \mathbf{A}_j \beps_j - \tr \mathbf{A}_j $ is
$
\log \left(\E\left [ e^{u\tilde\varphi_{\lambda,j}} \right]\right ) =\sum_{\ell=1}^n -u r_{j,\ell} -\frac{1}{2} \log\left( 1- 2u r_{j,\ell}\right).
$
We now use the inequality $-x - \frac{1}{2}\log(1-2x) \leq \frac{x^2}{1-2x}$ for all $0<x<\frac{1}{2}$ which holds since $u \in \left (0,\frac{1}{2r_1} \right)$. This implies that
$
\log \left(\E\left [ e^{u\tilde\varphi_{\lambda,j}} \right]\right ) \leq \sum_{\ell=1}^n\frac{u^2 {r_{j,\ell}}^2}{1-2u r_{j,\ell}} \leq \frac{u^2 \norm{r_j}^2}{1-2u r_1},
$
where $\norm{r_j}^2 = r_{j,1}^2+\ldots+r_{n,j}^2$.  Finally, we have
\begin{equation} \label{eq:u}
\P\left(\tilde \varphi_{\lambda,J}\geq \alpha\right) \leq \exp\bigg( -\bigg( u\alpha - \sum_{j=1}^J\frac{\norm{r_j}^2 u^2}{1-2 r_{1}u}\bigg) \bigg) =\exp\bigg( -\bigg( u\alpha - \frac{\norm{r}^2 u^2}{1-2 r_{1}u}\bigg) \bigg) ,
\end{equation}
where $\norm{r}^2 = \sum_{j=1}^J \sum_{\ell=1}^n {r_{j,\ell}}^2$. The right hand side of the above inequality achieves its minimum at $u = \frac{1}{2r_1}\left(1 - \frac{\norm{r}}{\sqrt{2\alpha r_1+\norm{r}^2} } \right) $. Evaluating (\ref{eq:u}) at this point and using the inequality $ (1+x)^{1/2} \leq 1 + \frac{x}{2}$, valid for all $x\geq -1$, one has that
\begin{align*}
\P\left(\tilde \xi_{\lambda,J}\geq \alpha\right)  & \leq \exp\left(-\frac{\alpha^2}{2r_1\alpha + 2\norm{r}^2 +2\norm{r}^2(1+4\alpha r_1 / (2\norm{r}^2) )^{1/2}} \right)\\ &\leq \exp\left(-\frac{\alpha^2}{4r_1\alpha + 4\norm{r}^2} \right),
\end{align*}
by setting $x=\frac{\alpha^2}{4r_1\alpha + 4\norm{r}^2}$. We derive the following concentration inequality for $\xi_{\lambda,J}= \frac{1}{J} \tilde \xi_{\lambda,J} + \frac{1}{J}\sum_{j=1}^J\tr(\mathbf A_j)$,
$
\P\bigg( \xi_{\lambda,J} \geq \frac{1}{J}\sum_{j=1}^J\sum_{\ell=1}^n r_{j,\ell} + 4 \frac{r_1}{J}x  + \frac{\norm{r}}{J}\sqrt{4x  } \bigg) \leq e^{-x}.
$
To finish the proof, remark that $\norm{r}^2 = \sum_{j=1}^J \sum_{\ell=1} r_{j,\ell}^2 \leq \left( \sum_{j=1}^J \sum_{\ell=1}^n r_{j,\ell}\right)^2$ since all the $r_{j,\ell}$'s are positive.
\end{proof}

\begin{corollaire}\label{cor:Qepscon}
Under Assumptions \ref{ass:op12} to \ref{ass:smoo}, there exists a constant $C(\Theta,\F)  > 0$ such that for all $x > 0$,
$$
\P\left(\sup_{\btheta\in\bTheta} Q^{\varepsilon}_{\lambda}(\btheta) \geq C(\Theta,\F) \sigma^2 V(\lambda)\left(1 + 4 \frac{x}{J} +\sqrt{4\frac{x}{J}} \right)\right) \leq e^{-x}.
$$
\end{corollaire}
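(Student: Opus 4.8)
The plan is to first eliminate the supremum over $\btheta\in\bTheta$ by a purely deterministic domination, reducing $\sup_{\btheta}Q^{\varepsilon}_{\lambda}(\btheta)$ to the single, $\btheta$-free random variable $\xi_{\lambda,J}(I_n,\ldots,I_n)$ of Lemma~\ref{lemme:conNoise}, and only then to invoke that concentration bound. Fix $\btheta=(\btheta_1,\ldots,\btheta_J)\in\bTheta\subset\Theta^J$ and set $h_j(t)=\tilde T_{\btheta_j}\prs{S_\lambda(t),\bvarepsilon_j}$. By Assumption~\ref{ass:5} the function $t\mapsto\prs{S_\lambda(t),\bvarepsilon_j}$ lies in $L^2(\Omega)$, hence so does $h_j$ by the first inequality in Assumption~\ref{ass:op12} applied to $\LL_{\btheta_j}=\tilde T_{\btheta_j}$. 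Using the elementary identity $\tfrac1J\sum_{j=1}^J\snorm{h_j-\bar h}_{L^2}^2=\tfrac1J\sum_{j=1}^J\snorm{h_j}_{L^2}^2-\snorm{\bar h}_{L^2}^2\le\tfrac1J\sum_{j=1}^J\snorm{h_j}_{L^2}^2$ with $\bar h=\tfrac1J\sum_{j'}h_{j'}$, together with Assumption~\ref{ass:op12} once more, I would obtain
\[
Q^{\varepsilon}_{\lambda}(\btheta)\;\le\;\frac{\sigma^2}{J}\sum_{j=1}^J\snorm{\tilde T_{\btheta_j}\prs{S_\lambda(\cdot),\bvarepsilon_j}}_{L^2}^2\;\le\;\frac{C(\Theta)\,\sigma^2}{J}\sum_{j=1}^J\snorm{\prs{S_\lambda(\cdot),\bvarepsilon_j}}_{L^2}^2 .
\]
The right-hand side no longer depends on $\btheta$, so $\sup_{\btheta\in\bTheta}Q^{\varepsilon}_{\lambda}(\btheta)\le C(\Theta)\,\sigma^2\,\xi_{\lambda,J}(I_n,\ldots,I_n)$ in the notation of Lemma~\ref{lemme:conNoise}.

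Next I would apply Lemma~\ref{lemme:conNoise} with $A_j=I_n$ for every $j$. Then each matrix $\mathbf A_j$ equals $\mathbf{S}_\lambda:=\big[\sprs{S^{\ell}_{\lambda},S^{\ell'}_{\lambda}}_{L^2}\big]_{\ell,\ell'=1}^n$, so the quantity $\norm{\mathbf A}$ of the lemma is $\sum_{j=1}^J\tr(\mathbf{S}_\lambda)=J\,\tr(\mathbf{S}_\lambda)$. Since $\tr(\mathbf{S}_\lambda)=\sum_{\ell=1}^n\snorm{S^{\ell}_{\lambda}}_{L^2}^2=\int_\Omega\snorm{S_\lambda(t)}_{\R^n}^2\,dt=V(\lambda)$ (recall $V_\lambda(t)=\norm{S_\lambda(t)}_{\R^n}^2$ and $V(\lambda)=\int_\Omega V_\lambda(t)\,dt$), the lemma gives, for every $x>0$,
\[
\P\Big(\xi_{\lambda,J}(I_n,\ldots,I_n)\ge V(\lambda)\big(1+4\tfrac{x}{J}+\sqrt{4\tfrac{x}{J}}\big)\Big)\le e^{-x}.
\]
Combining this with the deterministic domination of the previous paragraph and putting $C(\Theta,\F):=C(\Theta)$ yields the announced inequality.

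I do not expect a genuine obstacle here; the one point that must be handled carefully is the order of operations. The passage from a supremum over the continuum $\bTheta$ to the single random variable $\xi_{\lambda,J}(I_n,\ldots,I_n)$ has to be done \emph{before} any probabilistic estimate, since a union bound over $\bTheta$ would be hopeless — this is precisely what the uniform-in-$\btheta$ operator bound $\norm{\LL_{\btheta}f}_{L^2}^2\le C(\Theta)\norm{f}_{L^2}^2$ of Assumption~\ref{ass:op12} makes possible. The only remaining bookkeeping is the identification $\tr(\mathbf{S}_\lambda)=V(\lambda)$, which is immediate from the definitions of $V_\lambda$ and $V(\lambda)$.
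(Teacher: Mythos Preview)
Your proof is correct and follows essentially the same approach as the paper: dominate $Q^{\varepsilon}_{\lambda}(\btheta)$ uniformly in $\btheta$ via Assumption~\ref{ass:op12} to obtain a $\btheta$-free quadratic form, then apply Lemma~\ref{lemme:conNoise}. The only cosmetic differences are that the paper applies the lemma with $A_j=\sigma I_n$ rather than pulling $\sigma^2$ out first, and that you spell out the variance-decomposition step and the identification $\tr(\mathbf S_\lambda)=V(\lambda)$ explicitly, whereas the paper leaves these implicit.
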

\begin{proof}
Assumption \ref{ass:op12} gives the uniform bound 
\begin{align*}
Q^{\varepsilon}_{\lambda}(\btheta)  
& \leq \frac{1}{J}\sum_{j=1}^{J} \int_\Omega\left( \tilde T_{\btheta_{j}}\prs{S_{\lambda}(t),\sigma\bvarepsilon_j} \right)^2dt \leq \frac{C(\Theta,\F)}{J} \sum_{j=1}^{J} \norm{\prs{S_{\lambda}(t),\sigma\bvarepsilon_j}}_{L^2}^2 \\& = C(\Theta,\F)\xi_{\lambda,J}(\sigma Id_n,\ldots,\sigma Id_n),
\end{align*}
where $\xi_{\lambda,J}(\sigma Id_n,\ldots,\sigma Id_n)$ is defined in Lemma \ref{lemme:conNoise} and does not depend on $\btheta$. Thus, the result immediately follows from Lemma \ref{lemme:conNoise}.
\end{proof}

\begin{corollaire}\label{cor:QZcon}
Under Assumptions \ref{ass:op12} to \ref{hyp:EigZmax}, there exists a constant $C(\Theta,\F)  > 0$ such that  for all $x\geq 0$,
$$
\P\left(\sup_{\btheta\in\bTheta} Q^Z_{\lambda}(\btheta) \geq  C(\Theta,\F)\gamma_{n}(\Theta)  V(\lambda)\left(1 + 4 \frac{x}{J} +\sqrt{4\frac{x}{J}} \right)\right) \leq e^{-x}.
$$
\end{corollaire}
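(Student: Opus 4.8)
The plan is to follow the proof of Corollary~\ref{cor:Qepscon} almost verbatim, the only genuinely new ingredient being that the ``noise'' entering $Q^Z_\lambda$ is a vector of the form $\T_{\btheta_j^*}\bZ_j$, which is Gaussian but with a \emph{random} covariance, so Lemma~\ref{lemme:conNoise} must be applied conditionally on the deformation parameters $\btheta^*$.

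\textbf{Step 1 (reduction to a $\btheta$-free quantity).} Exactly as in the proof of Corollary~\ref{cor:Qepscon}, I would first use the elementary variance bound $\frac1J\sum_j\norm{a_j-\frac1J\sum_{j'}a_{j'}}^2_{L^2}\le\frac1J\sum_j\norm{a_j}^2_{L^2}$ with $a_j=\tilde T_{\btheta_j}\prs{S_\lambda(\cdot),\T_{\btheta_j^*}\bZ_j}$, followed by the operator bound $\norm{\tilde T_{\btheta_j}g}^2_{L^2}\le C(\Theta)\norm{g}^2_{L^2}$ of Assumption~\ref{ass:op12}, to obtain, uniformly over $\btheta\in\bTheta$,
$$
\sup_{\btheta\in\bTheta}Q^Z_\lambda(\btheta)\ \le\ C(\Theta,\F)\,\zeta_{\lambda,J},\qquad
\zeta_{\lambda,J}:=\frac1J\sum_{j=1}^J\norm{\prs{S_\lambda(\cdot),\T_{\btheta_j^*}\bZ_j}}^2_{L^2},
$$
where $\zeta_{\lambda,J}$ no longer depends on $\btheta$.

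\textbf{Step 2 (conditional Gaussian structure and Lemma~\ref{lemme:conNoise}).} Condition on $\btheta^*=(\btheta_1^*,\dots,\btheta_J^*)\in\Theta^J$. Since $Z$ is a centered Gaussian process, the $Z_j$'s are i.i.d.\ copies of $Z$ independent of $\btheta^*$, and each $T_\btheta$ is linear, the vectors $\T_{\btheta_j^*}\bZ_j$, $j=1,\dots,J$, are, conditionally on $\btheta^*$, independent centered Gaussian vectors with covariance $\Gamma_j:=\Ec_{\btheta_j^*}\big[\T_{\btheta_j^*}\bZ_j(\T_{\btheta_j^*}\bZ_j)'\big]$, the matrix appearing in Assumption~\ref{hyp:EigZmax}. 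Writing $\Gamma_j^{1/2}$ for its symmetric nonnegative square root, one has the conditional distributional identity $\zeta_{\lambda,J}\stackrel{d}{=}\xi_{\lambda,J}(\Gamma_1^{1/2},\dots,\Gamma_J^{1/2})$ with $\xi_{\lambda,J}$ as in Lemma~\ref{lemme:conNoise} and with the matrices $\Gamma_j^{1/2}$ now \emph{nonrandom}, so the lemma applies conditionally. With $A_j=\Gamma_j^{1/2}$ the matrix $\mathbf A_j$ of the lemma equals $\Gamma_j^{1/2}\mathbf S_\lambda\Gamma_j^{1/2}$, hence $\sum_\ell r_{j,\ell}=\tr(\Gamma_j\mathbf S_\lambda)\le\gamma_{\max}(\Gamma_j)\,\tr(\mathbf S_\lambda)$; Assumption~\ref{hyp:EigZmax} gives $\gamma_{\max}(\Gamma_j)\le\gamma_n(\Theta)$ for every $\btheta_j^*\in\Theta$, while $\tr(\mathbf S_\lambda)=\sum_\ell\norm{S^\ell_\lambda}^2_{L^2}=\int_\Omega\norm{S_\lambda(t)}^2_{\R^n}\,dt=V(\lambda)$. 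Thus $\norm{\mathbf A}=\sum_{j,\ell}r_{j,\ell}\le J\gamma_n(\Theta)V(\lambda)$, and Lemma~\ref{lemme:conNoise} yields, for all $x\ge0$,
$$
\P\!\left(\zeta_{\lambda,J}\ \ge\ \gamma_n(\Theta)V(\lambda)\Big(1+4\tfrac{x}{J}+\sqrt{4\tfrac{x}{J}}\Big)\ \Big|\ \btheta^*\right)\ \le\ e^{-x}.
$$

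\textbf{Step 3 (conclusion).} Since the deviation level on the right does not depend on $\btheta^*$, taking expectation over $\btheta^*$ removes the conditioning, and combining with Step~1 gives the asserted bound on $\sup_{\btheta\in\bTheta}Q^Z_\lambda(\btheta)$. The main obstacle, and the point to treat carefully, is precisely the randomness of the covariances $\Gamma_j=\Gamma(\btheta_j^*)$, which forbids a direct appeal to Lemma~\ref{lemme:conNoise}; it is handled by conditioning, the decisive fact being that Assumption~\ref{hyp:EigZmax} bounds $\gamma_{\max}(\Gamma_j)$ by $\gamma_n(\Theta)$ \emph{uniformly} in $\btheta_j^*\in\Theta$, so that the conditional estimate is uniform and survives integration over $\btheta^*$. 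One should also check that, conditionally on $\btheta^*$, the $\T_{\btheta_j^*}\bZ_j$ are jointly independent centered Gaussian vectors with the stated covariances (using mutual independence of the $Z_j$'s and their independence from the $\btheta_j^*$'s), and record the identity $\tr(\mathbf S_\lambda)=V(\lambda)$, which is immediate from the definition of $V(\lambda)$ in Assumption~\ref{ass:smoo}.
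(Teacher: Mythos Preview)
Your proposal is correct and follows essentially the same approach as the paper: reduce $Q^Z_\lambda(\btheta)$ to the $\btheta$-free quantity $\zeta_{\lambda,J}$ via the variance inequality and Assumption~\ref{ass:op12}, condition on $\btheta^*$ so that Lemma~\ref{lemme:conNoise} applies with $A_j=\Gamma_j^{1/2}$, bound $\tr(\mathbf A_j)=\tr(\Gamma_j\mathbf S_\lambda)\le\gamma_n(\Theta)V(\lambda)$ via Assumption~\ref{hyp:EigZmax}, and integrate out $\btheta^*$. Your write-up is in fact slightly more careful than the paper's (which writes $\gamma_{\max}(A_j)$ where $\gamma_{\max}(\Gamma_j)=\gamma_{\max}(A_j^2)$ is meant), but the argument is identical.
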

\begin{proof}
Assumption \ref{ass:op12} gives the uniform bound 
\begin{align*}
Q^{Z}_{\lambda}(\btheta) 
& \leq \frac{1}{J}\sum_{j=1}^{J} \int_\Omega\left( \tilde T_{\btheta_{j}}\prs{S_{\lambda}(t),\T_{\btheta_{j}^*} \bZ_{j}} \right)^2dt \leq \frac{C(\Theta,\F)}{J}\sum_{j=1}^{J} \norm{\prs{S_{\lambda},\T_{\btheta_{j}^*} \bZ_{j}}}^2_{L^2}.
\end{align*}
Hence, conditionally  on $\btheta^*$ we have that
$
\sup_{\btheta\in\Theta^J}Q^{Z}_{\lambda}(\btheta) \leq  C(\Theta,\F)   \xi_{\lambda,J}\big( A_1,\ldots, A_J \big),
$
where $\xi_{\lambda,J}\big( A_1,\ldots, A_J \big)$ is defined in  Lemma \ref{lemme:conNoise} with  $A_j = \E_{\btheta^*}\big[\T_{\btheta_j^*}\bZ_j (\T_{\btheta_j^*}\bZ_j)'\big]^{\frac{1}{2}}$.  Let us now give an upper bound on the largest eigenvalues of the matrices $\mathbf A_j = A_j \mathbf{S}_\lambda A_j$ with $\mathbf S_{\lambda} = \big[\langle S^{\ell}_{\lambda}, S^{\ell'}_{\lambda}\rangle_{L^2}\big]_{\ell,\ell'=1}^n$.  Under Assumption \ref{hyp:EigZmax} we have that $\tr(\mathbf A_j) \leq \gamma_{\max}(A_j) \tr{\mathbf S_\lambda} \leq \gamma_n(\Theta) V(\lambda)$, for all $j=1,\ldots,J$ and any $\btheta^* \in  \Theta^J$. Thus, the result follows  by arguing as in the proof of Lemma \ref{lemme:conNoise} and by taking expectation with respect to $\btheta^*$. 
\end{proof}

\end{appendix}


\bibliographystyle{alpha}
\bibliography{FrechetLower_biblio}

\end{document}